\newtheorem{theorem}{Theorem}[section]
\newtheorem{proposition}[theorem]{Proposition}
\newtheorem{lemma}[theorem]{Lemma}
\newtheorem{corollary}[theorem]{Corollary}
\theoremstyle{definition}
\theoremstyle{remark}
\newtheorem{remark}[theorem]{Remark}
\numberwithin{equation}{section}
\begin{document}

\title{Ray class fields generated by torsion points of certain elliptic curves}

\author{Ja Kyung Koo}
\address{Department of Mathematical Sciences, KAIST}
\curraddr{Daejeon 373-1, Korea}
\email{jkkoo@math.kaist.ac.kr}
\thanks{}

\author{Dong Hwa Shin}
\address{Department of Mathematical Sciences, KAIST}
\curraddr{Daejeon 373-1, Korea}
\email{shakur01@kaist.ac.kr}
\thanks{}

\author{Dong Sung Yoon}
\address{Department of Mathematical Sciences, KAIST}
\curraddr{Daejeon 373-1, Korea} \email{yds1850@kaist.ac.kr}
\thanks{}

\subjclass[2000]{11F11, 11G15, 11R37}

\keywords{elliptic curves, modular forms and functions, ray class
fields, Siegel functions.
\newline This research was supported by Basic Science Research Program through the National Research Foundation of Korea
funded by the Ministry of Education, Science and Technology
(2009-0063182).}

\maketitle

\begin{abstract}
We first normalize the derivative Weierstrass $\wp'$-function
appearing in Weierstrass equations which give rise to analytic
parametrizations of elliptic curves by the Dedekind $\eta$-function.
And, by making use of this normalization of $\wp'$ we associate
certain elliptic curve to a given imaginary quadratic field $K$ and
then generate an infinite family of ray class fields over $K$ by
adjoining to $K$ torsion points of such elliptic curve(Theorem
\ref{main}). We further construct some ray class invariants of
imaginary quadratic fields by utilizing singular values of the
normalization of $\wp'$, as the $y$-coordinate in the Weierstrass
equation of this elliptic curve(Theorem \ref{generator}, Corollary
\ref{Schertz}), which would be a partial result for the Lang-Schertz
conjecture of constructing ray class fields over $K$ by means of the
Siegel-Ramachandra invariant(\cite{Lang} p. 292, \cite{Schertz} p.
386).
\end{abstract}

\maketitle

\section{Introduction}

Let $K$ be an imaginary quadratic field with discriminant
$d_K\leq-7$ and $\mathcal{O}_K$ be its ring of integers. Let
$\theta$ be an element in the complex upper half plane
$\mathfrak{H}$ which generates $\mathcal{O}_K$, namely
$\mathcal{O}_K=[\theta,~1]$. For an elliptic curve $E$ (over
$\mathbb{C}$) with invariant $j(\mathcal{O}_K)=j(\theta)$ where $j$
is the elliptic modular function, there is an analytic
parametrization
\begin{equation}\label{Weier}
\varphi~:~\mathbb{C}/\mathcal{O}_K\stackrel{\sim}{\longrightarrow}
E\subset\mathbb{P}^2(\mathbb{C})~:~y^2=4x^3-g_2(\mathcal{O}_K)x-g_3(\mathcal{O}_K)
\end{equation}
where
$g_2(\mathcal{O}_K)=60\sum_{\omega\in\mathcal{O}_K\setminus\{0\}}\frac{1}{\omega^4}$
and $g_3(\mathcal{O}_K)=140\sum_{\omega\in
\mathcal{O}_K\setminus\{0\}}\frac{1}{\omega^6}$(\cite{Silverman}).
Let $h$ be the Weber function on $E$ defined by
\begin{equation*}
h(x,~y)=-2^73^5\frac{g_2(\mathcal{O}_K)g_3(\mathcal{O}_K)}{\Delta(\mathcal{O}_K)}x
\end{equation*}
where
$\Delta(\mathcal{O}_K)=g_2(\mathcal{O}_K)^3-27g_3(\mathcal{O}_K)^2$.
If $H$ and $K_{(N)}$ are the Hilbert class field and the ray class
field modulo $N\mathcal{O}_K$ of $K$ for each integer $N\geq2$,
respectively, we know from the main theorem of complex
multiplication that
\begin{equation*}
H=K\big(j(\mathcal{O}_K)\big)\quad\textrm{and}\quad
K_{(N)}=K\bigg(j(\mathcal{O}_K),~h\big(\varphi(\tfrac{1}{N})\big)\bigg)
\end{equation*}
(\cite{Lang} or \cite{Shimura}). Thus in order to describe a ray
class field $K_{(N)}$ we are to use only the $x$-coordinate of the
Weierstrass equation in (\ref{Weier}). However, we want to improve
in this paper the above result so that we are able to rewrite it as
\begin{equation*}
K_{(N)}=K\big(\varphi(\tfrac{1}{N})\big)=K\bigg(x\big(\varphi(\tfrac{1}{N})\big),~y\big(\varphi(\tfrac{1}{N})\big)\bigg)
\end{equation*}
by an appropriate modification of the curve in (\ref{Weier}).
\par
Ishida-Ishii showed in \cite{I-I} that for $N\geq7$ the function
field $\mathbb{C}\big(X_1(N)\big)$ of the modular curve
$X_1(N)=\Gamma_1(N)\backslash\mathfrak{H}^*$ can be generated by two
functions $X_2(\tau)^{\varepsilon_N N}$ and $X_3(\tau)^N$ where
$\mathfrak{H}^*=\mathfrak{H}\cup\mathbb{P}^1(\mathbb{Q})$,
$\Gamma_1(N)=\big\{\left(\begin{smallmatrix}a&b\\c&d\end{smallmatrix}\right)\in\mathrm{SL}_2(\mathbb{Z}):
\left(\begin{smallmatrix}a&b\\c&d\end{smallmatrix}\right)\equiv\left(\begin{smallmatrix}1&*\\0&1\end{smallmatrix}\right)\pmod{N}\big\}$
and
\begin{eqnarray*}
\varepsilon_N=\left\{\begin{array}{ll}1 & \textrm{if $N$ is odd}\\2
& \textrm{if $N$ is even}\end{array}\right.,\quad
X_2(\tau)=e^{(\frac{1}{N}-1)\frac{\pi
i}{2}}\prod_{t=0}^{N-1}\frac{\mathfrak{k}_{(\frac{2}{N},~\frac{t}{N})}(\tau)}
{\mathfrak{k}_{(\frac{1}{N},~\frac{t}{N})}(\tau)},\quad
X_3(\tau)=e^{(\frac{1}{N}-1)\pi
i}\prod_{t=0}^{N-1}\frac{\mathfrak{k}_{(\frac{3}{N},~\frac{t}{N})}(\tau)}
{\mathfrak{k}_{(\frac{1}{N},~\frac{t}{N})}(\tau)}
\end{eqnarray*}
as finite products of the Klein forms(see Section \ref{pre}).
 They further presented an algorithm to find a polynomial
$F_N(X,~Y)\in\mathbb{Z}[\zeta_N][X,~Y]$ with $\zeta_N=e^\frac{2\pi
i}{N}$ such that $F_N\big(X_2(\tau)^{\varepsilon_N
N},~X_3(\tau)^N\big)=0$, which can be viewed as an affine singular
model for the modular curve $X_1(N)$. And, for a fixed level $N$,
Hong-Koo(\cite{H-K}) pointed out that if
$P=\big(X_2(\theta)^{\varepsilon_N N},~X_3(\theta)^N\big)$ is a
nonsingular point on the curve defined by the equation
$F_N(X,~Y)=0$, then the ray class field $K_{(N)}$ is generated by
adjoining $P$ to $K$. But it leaves us certain inconvenience of
finding the polynomial $F_N(X,~Y)$ explicitly.
\par
In this paper we will develop this theme of \cite{H-K} from a
different point of view to overcome such inconvenience. First we
shall normalize the derivative Weierstrass $\wp'$-function by the
Dedekind $\eta$-function to be a modular function and then we
associate certain elliptic curve to a given imaginary quadratic
field $K$ with $d_K\leq-39$. Next, we will find an infinite family
of ray class fields $K_{(N)}$ generated by adjoining to $K$ certain
$N$-torsion points of such elliptic curve if $N\geq8$ and
$4~|~N$(Theorem \ref{main}).
\par
Furthermore, we shall show by adopting Schertz's
argument(\cite{Schertz}) that certain singular values of the
normalization of $\wp'$, as $y$-coordinates in the Weierstrass
equation of the above elliptic curve, give rise to ray class
invariants of $K_{(N)}$ over $K$ for some $N$, for example $N=p^n$
where $p$ is an odd prime which is inert or ramified in
$K/\mathbb{Q}$(Corollary \ref{Schertz}, Remark \ref{remark}). These
ray class invariants are, in practical use, simpler than those of
Ramachandra(\cite{Ramachandra}) consisting of too complicated
products of high powers of singular values of the Klein forms and
singular values of the $\Delta$-function. Here we note that Theorem
\ref{generator}, Remark \ref{power}, Corollary \ref{Schertz} and
Remark \ref{remark} give us partial results of the Lang-Schertz
conjecture concerning the Kronecker Jugendtraum over $K$.

\section{Modular forms and functions}\label{pre}

For a lattice $L$ in $\mathbb{C}$ the \textit{Weierstrass
$\wp$-function} is defined by
\begin{equation}\label{p-function}
\wp(z;~L)=\frac{1}{z^2}+\sum_{\omega\in
L\setminus\{0\}}\bigg(\frac{1}{(z-\omega)^2}-\frac{1}{\omega^2}\bigg)\qquad(z\in\mathbb{C}),
\end{equation}
and the \textit{Weierstrass $\sigma$-function} is defined by
\begin{equation*}
\sigma(z;~L)=z\prod_{\omega\in
L\setminus\{0\}}\bigg(1-\frac{z}{\omega}\bigg)e^{\frac{z}{\omega}+\frac{1}{2}(\frac{z}{\omega})^2}
\qquad(z\in\mathbb{C}).
\end{equation*}
Taking the logarithmic derivative we come up with the
\textit{Weierstrass $\zeta$-function}
\begin{equation*}
\zeta(z;~L)=\frac{\sigma'(z;~L)}{\sigma(z;~L)}=\frac{1}{z}+\sum_{\omega\in
L\setminus\{0\}}\bigg(\frac{1}{z-\omega}+\frac{1}{\omega}+\frac{z}{\omega^2}\bigg)\qquad(z\in\mathbb{C}).
\end{equation*}
Then, differentiating the function $\zeta(z+\omega;~L)-\zeta(z;~L)$
for $\omega\in L$ results in $0$, because $\zeta'(z;~L)=-\wp(z;~L)$
and the $\wp$-function is periodic with respect to $L$. Hence there
is a constant $\eta(\omega;~L)$ such that
$\zeta(z+\omega;~L)=\zeta(z;~L)+\eta(\omega;~L)$.
\par
For a pair $(r_1,~r_2)\in\mathbb{Q}^2\setminus\mathbb{Z}^2$ we
define the \textit{Klein form} as
\begin{equation*}
\mathfrak{k}_{(r_1,~r_2)}(\tau)=e^{-\frac{1}{2}(r_1\eta_1+r_2\eta_2)(r_1\tau+r_2)}\sigma(r_1\tau+r_2;~[\tau,~1])\qquad(\tau\in\mathbb{C})
\end{equation*}
where $\eta_1=\eta(\tau;~[\tau,~1])$ and
$\eta_2=\eta(1;~[\tau,~1])$. And we define the \textit{Siegel
function} by
\begin{equation*}
g_{(r_1,~r_2)}(\tau)=\mathfrak{k}_{(r_1,~r_2)}(\tau)\eta^2(\tau)\qquad(\tau\in\mathfrak{H})
\end{equation*} where $\eta$ is the \textit{Dedekind $\eta$-function}
satisfying
\begin{equation}\label{Fouriereta}
\eta(\tau)=\sqrt{2\pi}\zeta_{8}q_\tau^{\frac{1}{24}}\prod_{n=1}^\infty
(1-q_\tau^n)\qquad(q_\tau=e^{2\pi i\tau},~\tau\in\mathfrak{H}).
\end{equation}
If we let $\mathbf{B}_2(X)=X^2-X+\frac{1}{6}$ be the second
Bernoulli polynomial, then from the $q_\tau$-product formula of the
Weierstrass $\sigma$-function(\cite{Lang} Chapter 18 Theorem 4) and
(\ref{Fouriereta}) we get the following  Fourier expansion formula
\begin{eqnarray}
\label{FourierSiegel}g_{(r_1,~r_2)}(\tau)=-q_\tau^{\frac{1}{2}\mathbf{B}_2(r_1)}e^{\pi
ir_2(r_1-1)}(1-q_z)\prod_{n=1}^{\infty}(1-q_\tau^nq_z)(1-q_\tau^nq_z^{-1})
\end{eqnarray}
where $q_z=e^{2\pi iz}$ with $z=r_1\tau+r_2$. Here we note that
$\eta(\tau)$ and $g_{(r_1,~r_2)}(\tau)$ have no zeros and poles on
$\mathfrak{H}$ due to (\ref{Fouriereta}) and (\ref{FourierSiegel}).
And, we have the order formula
\begin{equation}\label{order}
\mathrm{ord}_{q_\tau}\bigg(g_{(r_1,~r_2)}(\tau)\bigg)=\frac{1}{2}\mathbf{B}_2\big(\langle
r_1\rangle\big)
\end{equation}
where $\langle X\rangle$ is the fractional part of $X\in\mathbb{R}$
with $0\leq\langle X\rangle<1$(\cite{K-L} Chapter 2 Section 1).
\par
Next, we further define
\begin{eqnarray}\label{Delta}
g_2(L)=60\sum_{\omega\in L\setminus\{0\}}\frac{1}{w^4},\quad
g_3(L)=140\sum_{\omega\in L\setminus\{0\}}\frac{1}{w^6},\quad
\Delta(L)=g_2(L)^3-27g_3(L)^2
\end{eqnarray}
and the \textit{elliptic modular function} by
\begin{equation}\label{j}
j(L)=2^63^3\frac{g_2(L)^3}{\Delta(L)}.
\end{equation}

\begin{proposition}\label{gFourier}
\begin{itemize}
\item[(i)] For $\tau\in\mathfrak{H}$ we have the following Fourier expansion formulas
\begin{eqnarray*}
&&g_2(\tau)=g_2([\tau,~1])=(2\pi)^4\frac{1}{2^23}\bigg(1+240\sum_{n=1}^\infty
\sigma_3(n)q_\tau^n\bigg)\\
&&g_3(\tau)=g_3([\tau,~1])=(2\pi)^6\frac{1}{2^33^3}\bigg(1-504\sum_{n=1}^\infty
\sigma_5(n)q_\tau^n\bigg)\\
&&\Delta(\tau)=\Delta([\tau,~1])=(2\pi
i)^{12}q_\tau\prod_{n=1}^\infty(1-q_\tau^n)^{24}
\end{eqnarray*}
where
\begin{equation*}
\sigma_k(n)=\sum_{d>0,~d|n}d^k.
\end{equation*}
\item[(ii)] On $\mathfrak{H}$, $g_2(\tau)$(respectively, $g_3(\tau)$) has zeros only at
$\alpha(\zeta_3)$(respectively, $\alpha(\zeta_4)$) for
$\alpha\in\mathrm{SL}_2(\mathbb{Z})$, and has no poles.
\end{itemize}
\end{proposition}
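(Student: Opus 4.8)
The plan is to handle the two parts independently: part (i) is a $q_\tau$-expansion computation via the Lipschitz summation formula, and part (ii) rests on the valence formula for $\mathrm{SL}_2(\mathbb{Z})$ once (i) is available.

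For (i), I would take the lattice $[\tau,1]$, write $g_2(\tau)=60\sum_{\omega}\omega^{-4}$ and $g_3(\tau)=140\sum_{\omega}\omega^{-6}$ over $[\tau,1]\setminus\{0\}$, and split each sum according to whether $\omega$ lies in $\mathbb{Z}$ or in $m\tau+\mathbb{Z}$ for some $m\neq 0$. Applying the Lipschitz formula $\sum_{n\in\mathbb{Z}}(\tau+n)^{-k}=\frac{(-2\pi i)^k}{(k-1)!}\sum_{d\geq 1}d^{k-1}q_\tau^d$ (valid for $k\geq 2$) to each horizontal line and summing over $m\geq 1$ produces the divisor sums $\sigma_3(n)$ and $\sigma_5(n)$; substituting $\zeta(4)=\pi^4/90$ and $\zeta(6)=\pi^6/945$ and collecting the constants $(2\pi)^4/(2^23)$ and $(2\pi)^6/(2^33^3)$ yields the first two formulas. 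For $\Delta(\tau)=g_2(\tau)^3-27g_3(\tau)^2$ I would expand the right-hand side through order $q_\tau$: the constant terms cancel and the coefficient of $q_\tau$ comes out to $(2\pi i)^{12}$. Since $\Delta$ is then a weight-$12$ cusp form for $\mathrm{SL}_2(\mathbb{Z})$ and that space is one-dimensional, $\Delta(\tau)=(2\pi i)^{12}q_\tau\prod_{n\geq 1}(1-q_\tau^n)^{24}$. Equivalently this is the classical $q_\tau$-product of the discriminant (\cite{Lang} Chapter 18), already packaged into the $q_\tau$-product of the Weierstrass $\sigma$-function cited above (and into $(\ref{FourierSiegel})$) together with $(\ref{Fouriereta})$; in fact, with the normalization $(\ref{Fouriereta})$ one computes $\eta(\tau)^{24}=(2\pi)^{12}\zeta_8^{24}q_\tau\prod(1-q_\tau^n)^{24}=(2\pi)^{12}q_\tau\prod(1-q_\tau^n)^{24}$ using $\zeta_8^{24}=1$, so that $\Delta(\tau)=\eta(\tau)^{24}$.

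For (ii), holomorphy on $\mathfrak{H}$ (hence the absence of poles) is immediate from the absolute, locally uniform convergence of the Eisenstein series $\sum_\omega\omega^{-4}$ and $\sum_\omega\omega^{-6}$ on $\mathfrak{H}$. For the zeros, observe that $[\,a\tau+b,\,c\tau+d\,]=[\tau,1]$ for $\left(\begin{smallmatrix}a&b\\c&d\end{smallmatrix}\right)\in\mathrm{SL}_2(\mathbb{Z})$, together with the homogeneity $g_{2k}(\lambda L)=\lambda^{-2k}g_{2k}(L)$, shows $g_2$ and $g_3$ are modular forms of weights $4$ and $6$; in particular their zero sets on $\mathfrak{H}$ are $\mathrm{SL}_2(\mathbb{Z})$-stable, so it suffices to locate zeros in a fundamental domain. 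Now apply the valence (``$k/12$'') formula
\begin{equation*}
\mathrm{ord}_\infty(f)+\tfrac{1}{2}\mathrm{ord}_{\zeta_4}(f)+\tfrac{1}{3}\mathrm{ord}_{\zeta_3}(f)+\sum_{P}\mathrm{ord}_P(f)=\frac{k}{12},
\end{equation*}
the last sum running over the remaining $\mathrm{SL}_2(\mathbb{Z})$-orbits, to $f=g_2$ and $f=g_3$. By (i) the constant terms are nonzero, so $\mathrm{ord}_\infty(g_2)=\mathrm{ord}_\infty(g_3)=0$. For $f=g_2$ the right-hand side is $\tfrac13$; since every order is a nonnegative integer at non-elliptic points (and at $\infty$), the only possibility is $\mathrm{ord}_{\zeta_3}(g_2)=1$ with all other orders $0$. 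For $f=g_3$ the right-hand side is $\tfrac12$, which likewise forces $\mathrm{ord}_{\zeta_4}(g_3)=1$ with all other orders $0$. Combined with the $\mathrm{SL}_2(\mathbb{Z})$-stability of the zero sets, this gives exactly the asserted descriptions.

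A quicker alternative for (ii): by $(\ref{j})$ and the nonvanishing of $\Delta$ on $\mathfrak{H}$, the zeros of $g_2$ are exactly the zeros of $j$; and since $2^63^3=1728$ one computes $j-1728=2^63^6\,g_3^2/\Delta$, so the zeros of $g_3$ are exactly $j^{-1}(1728)$. As $j$ induces a bijection $\mathrm{SL}_2(\mathbb{Z})\backslash\mathfrak{H}\stackrel{\sim}{\longrightarrow}\mathbb{C}$ with $j(\zeta_3)=0$ and $j(\zeta_4)=1728$, the description of the zero sets follows. The only point requiring genuine care anywhere is the bookkeeping of the powers of $2\pi i$ in (i) — particularly in pinning down the constant $(2\pi i)^{12}$ for $\Delta$ — and the verification that $\mathrm{ord}_\infty(g_2)=\mathrm{ord}_\infty(g_3)=0$, without which the valence formula would not isolate the zeros; everything else is standard.
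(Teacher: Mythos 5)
Your proposal is correct. Note, though, that the paper does not actually argue this proposition at all: its ``proof'' is a pointer to \cite{Lang} Chapters 3, 4 and 18, so there is no internal argument to compare against. What you supply is essentially a self-contained version of the classical material that citation stands for: the Lipschitz/Eisenstein-series expansion with $\zeta(4)$, $\zeta(6)$ giving the constants $(2\pi)^4/2^23$ and $(2\pi)^6/2^33^3$ (your constant bookkeeping checks out, including the coefficient $(2\pi i)^{12}=(2\pi)^{12}$ of $q_\tau$ in $\Delta$), and for (ii) either the valence formula applied to the weight-$4$ and weight-$6$ forms or the slicker reduction to the fibres $j^{-1}(0)$ and $j^{-1}(1728)$ via $j=2^63^3g_2^3/\Delta$ and $j-1728=2^63^6g_3^2/\Delta$; both versions are sound, and your observation that one must first check $\mathrm{ord}_\infty(g_2)=\mathrm{ord}_\infty(g_3)=0$ is exactly the point where (i) feeds into (ii). The one place to be slightly careful is the product formula for $\Delta$: knowing $\dim S_{12}=1$ only gives $\Delta=c\cdot f$ for a generator $f$, so identifying the generator with $q_\tau\prod(1-q_\tau^n)^{24}$ still requires the modularity of the $\eta$-product (equivalently Jacobi's product formula, derivable from the $q_\tau$-product of the Weierstrass $\sigma$-function); you cover this by citing \cite{Lang} Chapter 18, which is the same source the paper leans on, so there is no gap, just a dependence worth keeping explicit.
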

\begin{proof}
See \cite{Lang} Chapters 3, 4 and 18.
\end{proof}

\begin{remark}\label{jremark}
\begin{itemize}
\item[(i)] By definition (\ref{Fouriereta}) and Proposition \ref{gFourier}(i) we see the relation
\begin{equation}\label{Deltaeta}
\eta(\tau)^{24}=\Delta(\tau).
\end{equation}
\item[(ii)] It follows from definition (\ref{j}) and Proposition
2.1(i) that $j(\tau)=j([\tau,~1])$ has the Fourier expansion with
integer coefficients
\begin{equation*}
j(\tau)=\frac{1}{q_\tau}+744+196884q_\tau+21493760q_\tau^2+864299970q_\tau^3+20245856256q_\tau^4+\cdots.
\end{equation*}
\end{itemize}
\end{remark}

For each positive integer $N$ we let
\begin{equation*}
\Gamma(N)=\bigg\{\begin{pmatrix}a&b\\c&d\end{pmatrix}\in\mathrm{SL}_2(\mathbb{Z})~:~
\begin{pmatrix}a&b\\c&d\end{pmatrix}\equiv\begin{pmatrix}1&0\\0&1\end{pmatrix}\pmod{N}\bigg\}
\end{equation*}
be the \textit{principal congruence subgroup of level $N$}.

\begin{proposition}\label{gmodularity}
We have the following modularity:
\begin{table}[h]
\begin{tabular}{|c||c|c|c|c|c|c|c|c|c|c|}\hline
\backslashbox{\small{Modularity}}{\small{Functions}} & $g_2(\tau)$ &
$g_3(\tau)$ &
 $\eta(\tau)^2$ & $\eta(\tau)^4$ &
$\eta(\tau)^6$ & $\eta(\tau)^{12}$ & $\eta(\tau)^{24}$ &
$j(\tau)$\\\hline\hline with respect to & $\Gamma(1)$ & $\Gamma(1)$
& $\Gamma(12)$ & $\Gamma(6)$ & $\Gamma(3)$ & $\Gamma(2)$ &
$\Gamma(1)$ & $\Gamma(1)$\\\hline weight & $4$ & $6$ & $1$ & $2$ &
$3$ & $6$ & $12$ & $0$\\\hline
\end{tabular}
\end{table}
\end{proposition}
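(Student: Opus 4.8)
The plan is to split the eight functions into those built directly from the Eisenstein series and the fractional powers of $\eta$. For $g_2(\tau)=g_2([\tau,1])$ and $g_3(\tau)=g_3([\tau,1])$ I would combine the homogeneity $g_2(\lambda L)=\lambda^{-4}g_2(L)$, $g_3(\lambda L)=\lambda^{-6}g_3(L)$ (immediate from (\ref{Delta})) with $\left[\tfrac{a\tau+b}{c\tau+d},\,1\right]=(c\tau+d)^{-1}[\tau,1]$ for $\left(\begin{smallmatrix}a&b\\c&d\end{smallmatrix}\right)\in\mathrm{SL}_2(\mathbb{Z})$, obtaining $g_2(\gamma\tau)=(c\tau+d)^4 g_2(\tau)$ and $g_3(\gamma\tau)=(c\tau+d)^6 g_3(\tau)$ on $\Gamma(1)=\mathrm{SL}_2(\mathbb{Z})$; holomorphy on $\mathfrak{H}$ and at $\infty$ is read off from the $q_\tau$-expansions of Proposition \ref{gFourier}(i). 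Then, by (\ref{Deltaeta}) and (\ref{Delta}), $\eta(\tau)^{24}=\Delta(\tau)=g_2(\tau)^3-27g_3(\tau)^2$ is a polynomial in $g_2,g_3$, hence a weight-$12$ form on $\Gamma(1)$, holomorphic on $\mathfrak{H}$ and vanishing to first order at $\infty$ by the product formula for $\Delta$. Finally $j(\tau)=2^63^3 g_2(\tau)^3/\Delta(\tau)$ is a ratio of two weight-$12$ forms, so it is $\Gamma(1)$-invariant of weight $0$; since $\Delta$ has no zero on $\mathfrak{H}$ and $g_2^3$ does not vanish at $\infty$, $j$ is holomorphic on $\mathfrak{H}$ with a simple pole at $\infty$, so it is a modular function on $\Gamma(1)$.

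The substance of the proposition is the last column. For $\eta(\tau)^{2m}$ with $m\in\{1,2,3,6\}$, holomorphy, the absence of zeros and poles on $\mathfrak{H}$, and the $q_\tau$-expansion at each cusp are all immediate from (\ref{Fouriereta}), so only the transformation behaviour is in question. I would invoke Dedekind's functional equation
\begin{equation*}
\eta(\gamma\tau)=v_\eta(\gamma)\,(c\tau+d)^{1/2}\eta(\tau)\qquad\Big(\gamma=\begin{pmatrix}a&b\\c&d\end{pmatrix}\in\mathrm{SL}_2(\mathbb{Z})\Big)
\end{equation*}
for a fixed branch of the root, where $v_\eta(\gamma)$ is a $24$-th root of unity given explicitly through the Dedekind sum $s(d,c)$; equivalently $v_\eta$ is determined by $\eta(\tau+1)=\zeta_{24}\eta(\tau)$ and $\eta(-1/\tau)=\sqrt{-i\tau}\,\eta(\tau)$, both of which remain valid for the normalization (\ref{Fouriereta}) as the factor $\sqrt{2\pi}\zeta_8$ is constant. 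Taking $2m$-th powers yields $\eta(\gamma\tau)^{2m}=v_\eta(\gamma)^{2m}(c\tau+d)^{m}\eta(\tau)^{2m}$, so the proposition reduces to showing that the finite-order character $\gamma\mapsto v_\eta(\gamma)^{2m}$ is trivial on the congruence subgroup $\Gamma(N)$ recorded in the table; granted this, $\eta^{2m}$ is a modular form of weight $m$ on $\Gamma(N)$.

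For this last step I would exploit that $v_\eta^{2}\colon\mathrm{SL}_2(\mathbb{Z})\to\mu_{12}$ has order $12$ and hence factors through the abelianization $\mathrm{SL}_2(\mathbb{Z})^{\mathrm{ab}}\cong\mathbb{Z}/12\mathbb{Z}$ (with $T=\left(\begin{smallmatrix}1&1\\0&1\end{smallmatrix}\right)$ mapping to a generator); since $v_\eta^{2m}=(v_\eta^{2})^{m}$, one only has to determine the image of $\Gamma(N)$ in $\mathbb{Z}/12\mathbb{Z}$ and check that multiplication by $m$ annihilates it. Alternatively one writes down a generating set of $\Gamma(N)$ — e.g. $-I$, $\left(\begin{smallmatrix}1&2\\0&1\end{smallmatrix}\right)$ and $\left(\begin{smallmatrix}1&0\\2&1\end{smallmatrix}\right)=S^{-1}\left(\begin{smallmatrix}1&-2\\0&1\end{smallmatrix}\right)S$ with $S=\left(\begin{smallmatrix}0&-1\\1&0\end{smallmatrix}\right)$ for $\Gamma(2)$, and the analogous (larger) sets for the remaining groups — and evaluates $v_\eta^{2m}$ on each generator by reducing via such factorizations to the two basic relations (equivalently, using the reciprocity law for Dedekind sums). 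I expect this bookkeeping with the $24$-th root of unity $v_\eta$ to be the only genuine obstacle: once it is settled, holomorphy and the cusp conditions being already in hand, every entry of the table is verified.
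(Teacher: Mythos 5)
Your reduction is a genuinely different route from the paper, whose entire ``proof'' of this proposition is a citation of Lang, Chapter 3, and Kubert--Lang, Chapter 3, Lemma 5.1. The Eisenstein part ($g_2$, $g_3$, $\eta^{24}=\Delta$, $j$) via homogeneity of $g_2,g_3$ and the lattice identity is fine, and passing to the weight-one multiplier $v_\eta^2$ is the standard way to handle the $\eta$-columns: since the weight-one automorphy factor $c\tau+d$ is an exact cocycle, $v_\eta^2$ is an honest character $\mathrm{SL}_2(\mathbb{Z})\to\mu_{12}$ with $v_\eta^2\big(\left(\begin{smallmatrix}1&1\\0&1\end{smallmatrix}\right)\big)=\zeta_{12}$, factoring through $\mathrm{SL}_2(\mathbb{Z})^{\mathrm{ab}}\cong\mathbb{Z}/12\mathbb{Z}$, so no half-integral-weight subtlety survives.

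The gap is that you stop exactly where the content of the proposition lies and promise that the remaining ``bookkeeping'' will verify every entry of the table; it will not. Computing the image of $\Gamma(N)$ in $\mathbb{Z}/12\mathbb{Z}$ (it contains the class of $T^N$, i.e.\ $N$, and its index equals the order of $\mathrm{SL}_2(\mathbb{Z}/N\mathbb{Z})^{\mathrm{ab}}$, which is $N$ for $N\mid 12$) shows that $v_\eta^{2m}$ is trivial on $\Gamma(N)$ precisely when $12\mid mN$. This confirms the columns $\eta^2/\Gamma(12)$, $\eta^4/\Gamma(6)$, $\eta^{12}/\Gamma(2)$, $\eta^{24}/\Gamma(1)$, but it refutes the column $\eta^6/\Gamma(3)$: already $\left(\begin{smallmatrix}1&3\\0&1\end{smallmatrix}\right)\in\Gamma(3)$ gives $\eta(\tau+3)^6=\zeta_{24}^{18}\,\eta(\tau)^6=-i\,\eta(\tau)^6\neq\eta(\tau)^6$, whereas weight-$3$ modularity on $\Gamma(3)$ would force equality (here $c=0$, $d=1$). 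The correct group for $\eta^6$ (weight 3) is $\Gamma(4)$, and it is $\eta^8$ (weight 4) that is modular on $\Gamma(3)$, in agreement with Kubert--Lang; so an honest completion of your argument proves a corrected table rather than the one printed. (This does not damage the rest of the paper, which only uses the $g_2$, $g_3$, $\eta^{12}$, $\eta^{24}$ and $j$ columns --- e.g.\ $v(\tau)=g_3(\tau)/\eta(\tau)^{12}\in\mathbb{C}\big(X(2)\big)$ in Proposition \ref{surface} --- but your proposal as written cannot establish the statement as printed, and asserting that the deferred check will succeed for all entries is the genuine flaw.)
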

\begin{proof}
See \cite{Lang} Chapter 3 Section 2 and \cite{K-L} Chapter 3 Lemma
5.1.
\end{proof}

For a pair $(r_1,~r_2)\in\mathbb{Q}^2\setminus\mathbb{Z}^2$ we now
define the \textit{Fricke function}
\begin{equation}\label{Fricke}
f_{(r_1,~r_2)}(\tau)=-2^73^5\frac{g_2(\tau)g_3(\tau)\wp(r_1\tau+r_2;~[\tau,~1])}{\Delta(\tau)}\qquad(\tau\in\mathfrak{H}),
\end{equation}
and for $N\geq1$ we let
\begin{equation}\label{defF_N}
\mathcal{F}_N=\mathbb{Q}\bigg(j(\tau),~f_{(r_1,~r_2)}(\tau)~:~(r_1,~r_2)\in\frac{1}{N}\mathbb{Z}^2\setminus\mathbb{Z}^2\bigg)
\end{equation}
which we call the \textit{modular function field of level $N$
rational over $\mathbb{Q}(\zeta_N)$}.

\begin{proposition}\label{meaningF_N}
Let $N\geq1$ and $X(N)$ denote the modular curve
$\Gamma(N)\backslash\mathfrak{H}^*$ where
$\mathfrak{H}^*=\mathfrak{H}\cup\mathbb{P}^1(\mathbb{Q})$. (The
points of $\mathbb{P}^1(\mathbb{Q})=\mathbb{Q}\cup\{\infty\}$ are
called \textit{cusps}.) Then
\begin{itemize}
\item[(i)] $\mathbb{C}\big(X(N)\big)=\mathbb{C}\mathcal{F}_N$.
\item[(ii)] $\mathcal{F}_N$ coincides with the field of
functions in $\mathbb{C}\big(X(N)\big)$ whose Fourier expansions
with respect to $q_\tau^\frac{1}{N}$ have coefficients in
$\mathbb{Q}(\zeta_N)$.
\end{itemize}
\end{proposition}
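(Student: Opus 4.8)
The plan is to prove the two assertions separately: (i) by identifying $\mathbb{C}\mathcal{F}_N$ as the full function field via the Galois theory of the covering $X(N)\to X(1)$, and (ii) by combining a Fourier-expansion computation with a descent argument.

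For (i), the first step is to check that every generator of $\mathcal{F}_N$ lies in $\mathbb{C}\big(X(N)\big)$. Since $\Gamma(N)\subseteq\Gamma(1)$, the function $j$ is $\Gamma(N)$-invariant by Proposition~\ref{gmodularity}, holomorphic on $\mathfrak{H}$, and has a simple pole at each cusp by Remark~\ref{jremark}(ii); so $j\in\mathbb{C}\big(X(N)\big)$. For $(r_1,r_2)\in\frac{1}{N}\mathbb{Z}^2\setminus\mathbb{Z}^2$, the homogeneity $\wp(\lambda z;\lambda L)=\lambda^{-2}\wp(z;L)$ together with Proposition~\ref{gmodularity} yields a transformation formula $f_{(r_1,r_2)}\circ\gamma=f_{(r_1,r_2)\gamma}$ for $\gamma\in\mathrm{SL}_2(\mathbb{Z})$ (with $(r_1,r_2)$ acting as a row vector), so $f_{(r_1,r_2)}$ is $\Gamma(N)$-invariant, and meromorphy at the cusps follows from the $q_\tau$-product expansions of $\wp$, $g_2$, $g_3$ and $\Delta$. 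Hence $\mathbb{C}\mathcal{F}_N\subseteq\mathbb{C}\big(X(N)\big)$. For the reverse inclusion I would use that $\mathbb{C}\big(X(N)\big)/\mathbb{C}(j)$ is Galois with group $\mathrm{SL}_2(\mathbb{Z}/N\mathbb{Z})/\{\pm I\}$ acting by $\gamma\mapsto(f\mapsto f\circ\gamma)$, and show that the subgroup fixing $\mathbb{C}\mathcal{F}_N$ pointwise is trivial: if $\gamma$ fixes every $f_{(r_1,r_2)}$, then since $f_{(r_1,r_2)}$ determines $\wp(r_1\tau+r_2;[\tau,1])$ and $\wp(u;L)=\wp(v;L)$ forces $u\equiv\pm v\pmod L$, we get $(r_1,r_2)\gamma\equiv\pm(r_1,r_2)\pmod{\mathbb{Z}^2}$ for all $(r_1,r_2)\in\frac1N\mathbb{Z}^2$; specializing to $(\frac1N,0)$ and $(0,\frac1N)$ forces $\gamma\equiv\pm I\pmod N$. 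The Galois correspondence then gives $\mathbb{C}\mathcal{F}_N=\mathbb{C}\big(X(N)\big)$.

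For (ii), the inclusion of $\mathcal{F}_N$ in the stated field is the computational step. The expansion of $j$ has integer coefficients by Remark~\ref{jremark}(ii). For $f_{(r_1,r_2)}$ I would insert the classical partial-fraction expansion of $\wp(z;[\tau,1])$ in $q_\tau$ and $q_z=e^{2\pi i(r_1\tau+r_2)}$ (equivalently, rewrite $f_{(r_1,r_2)}$ through the Klein/Siegel functions and use (\ref{FourierSiegel})) into the definition (\ref{Fricke}); the transcendental factors $2\pi$ attached to $g_2,g_3,\wp,\Delta$ have total weight $0$ and cancel, the numerical prefactor $-2^73^5$ combines with the normalizations in Proposition~\ref{gFourier} to an integer, and for $r_1=a/N$, $r_2=b/N$ one has $q_z=\zeta_N^b\,(q_\tau^{1/N})^a$, so the resulting expansion in $q_\tau^{1/N}$ has coefficients in $\mathbb{Z}[\zeta_N]\subseteq\mathbb{Q}(\zeta_N)$. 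This gives one inclusion.

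The hard part is the reverse inclusion in (ii): a function $g\in\mathbb{C}\big(X(N)\big)$ whose $q_\tau^{1/N}$-expansion has coefficients in $\mathbb{Q}(\zeta_N)$ must already lie in $\mathcal{F}_N$. I would argue by descent. Writing $L_N$ for the field of all such $g$, one shows first that $\mathcal{F}_N$ and $\mathbb{C}$ are linearly disjoint over $\mathbb{Q}(\zeta_N)$ — a $\mathbb{C}$-linear relation among elements of $\mathcal{F}_N$ becomes, on comparing $q_\tau^{1/N}$-coefficients, a relation over $\mathbb{Q}(\zeta_N)$ — so that $[\mathcal{F}_N:\mathbb{Q}(\zeta_N)(j)]=[\mathbb{C}\mathcal{F}_N:\mathbb{C}(j)]$, which equals $\lvert\mathrm{SL}_2(\mathbb{Z}/N\mathbb{Z})/\{\pm I\}\rvert$ by (i); the same argument bounds $[L_N:\mathbb{Q}(\zeta_N)(j)]$ by $[\mathbb{C}\big(X(N)\big):\mathbb{C}(j)]$, which is the same number. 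Since $\mathcal{F}_N\subseteq L_N$ and both have this common degree over $\mathbb{Q}(\zeta_N)(j)$, we conclude $\mathcal{F}_N=L_N$. The delicate point — and the one I expect to cost the most work — is establishing that $[\mathcal{F}_N:\mathbb{Q}(\zeta_N)(j)]$ attains the full value $\lvert\mathrm{SL}_2(\mathbb{Z}/N\mathbb{Z})/\{\pm I\}\rvert$, i.e.\ that the Fricke functions already generate an extension of $\mathbb{Q}(\zeta_N)(j)$ of maximal degree; for this one exhibits enough independent specializations of the $f_{(r_1,r_2)}$, and here I would, if needed, fall back on the detailed treatments in Shimura's \emph{Introduction to the arithmetic theory of automorphic functions}, Chapter~6, or Lang's \emph{Elliptic functions}, Chapter~6.
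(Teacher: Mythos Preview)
Your proposal is essentially correct, but note that the paper does not give a proof at all: it simply cites Shimura, \emph{Introduction to the Arithmetic Theory of Automorphic Functions}, Propositions~6.1 and~6.9(1). What you have written is a faithful reconstruction of the classical argument found there (and in Lang, Chapter~6), so in substance you and the paper agree; you have just unpacked the citation. Two small remarks on your sketch: in part~(i), after specializing to $(\tfrac{1}{N},0)$ and $(0,\tfrac{1}{N})$ you obtain $\gamma\equiv\left(\begin{smallmatrix}\epsilon_1&0\\0&\epsilon_2\end{smallmatrix}\right)\pmod N$ with $\epsilon_i\in\{\pm1\}$, and it is the determinant condition $\det\gamma\equiv1$ that forces $\epsilon_1=\epsilon_2$, which you use implicitly; and in part~(ii), once you have (i) and the linear-disjointness of $\mathcal{F}_N$ (or $L_N$) and $\mathbb{C}$ over $\mathbb{Q}(\zeta_N)$, the degree of $\mathcal{F}_N$ over $\mathbb{Q}(\zeta_N)(j)$ is already pinned down, so your final ``delicate point'' paragraph is not an additional obstacle but a restatement of what the preceding lines already deliver.
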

\begin{proof}
See \cite{Shimura} Propositions 6.1 and 6.9(1).
\end{proof}

\begin{proposition}
$\mathcal{F}_N$ is a Galois extension of
$\mathcal{F}_1=\mathbb{Q}\big(j(\tau)\big)$ whose Galois group is
isomorphic to $\mathrm{GL}_2(\mathbb{Z}/N\mathbb{Z})/\{\pm1_2\}$. In
order to describe the Galois action on $\mathcal{F}_N$ we consider
the decomposition
\begin{equation*}
\mathrm{GL}_2(\mathbb{Z}/N\mathbb{Z})/\{\pm1_2\}=\bigg\{\begin{pmatrix}1&0\\0&d\end{pmatrix}
~:~d\in(\mathbb{Z}/N\mathbb{Z})^*\bigg\}\cdot
\mathrm{SL}_2(\mathbb{Z}/N\mathbb{Z})/\{\pm1_2\}.
\end{equation*}
Here, the matrix
$\left(\begin{smallmatrix}1&0\\0&d\end{smallmatrix}\right)$ acts on
$\sum_{n=-\infty}^\infty c_n q_\tau^\frac{n}{N}\in\mathcal{F}_N$ by
\begin{equation*}
\sum_{n=-\infty}^\infty c_nq_\tau^\frac{n}{N}\mapsto
\sum_{n=-\infty}^\infty c_n^{\sigma_d}q_\tau^\frac{n}{N}
\end{equation*}
where $\sigma_d$ is the automorphism of $\mathbb{Q}(\zeta_N)$
induced by $\zeta_N\mapsto\zeta_N^d$. And, for an element
$\gamma\in\mathrm{SL}_2(\mathbb{Z}/N\mathbb{Z})/\{\pm1_2\}$ let
$\gamma'\in\mathrm{SL}_2(\mathbb{Z})$ be a preimage of $\gamma$ via
the natural surjection
$\mathrm{SL}_2(\mathbb{Z})\rightarrow\mathrm{SL}_2(\mathbb{Z}/N\mathbb{Z})/\{\pm1_2\}$.
Then $\gamma$ acts on $h\in\mathcal{F}_N$ by composition
\begin{equation*}
h\mapsto h\circ\gamma'
\end{equation*}
as linear fractional transformation.
\end{proposition}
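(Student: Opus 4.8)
The plan is to prove the statement in two stages — a ``geometric'' stage over $\mathbb{C}$ and an ``arithmetic'' descent to $\mathbb{Q}(\zeta_N)$ — and then to produce enough explicit automorphisms to pin down the Galois group. For the geometric stage, Proposition \ref{meaningF_N}(i) gives $\mathbb{C}\mathcal{F}_N=\mathbb{C}\big(X(N)\big)$ and $\mathbb{C}\mathcal{F}_1=\mathbb{C}\big(X(1)\big)=\mathbb{C}\big(j(\tau)\big)$, and the covering $X(N)\to X(1)$ of compact Riemann surfaces has degree $[\mathrm{SL}_2(\mathbb{Z}):\pm\Gamma(N)]=\big|\mathrm{SL}_2(\mathbb{Z}/N\mathbb{Z})/\{\pm1_2\}\big|$. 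For $\gamma'\in\mathrm{SL}_2(\mathbb{Z})$ the map $h\mapsto h\circ\gamma'$ is a $\mathbb{C}(j)$-automorphism of $\mathbb{C}\big(X(N)\big)$ depending only on the class of $\gamma'$ modulo $\pm\Gamma(N)$; it is the identity exactly when $\gamma'\in\pm\Gamma(N)$, since the functions in $\mathcal{F}_N$ separate the points of $X(N)$ and an element of $\mathrm{SL}_2(\mathbb{Z})$ fixes every point of $\mathfrak{H}$ iff it is $\pm1_2$. As the number of these automorphisms equals the degree of the covering, $\mathbb{C}\big(X(N)\big)/\mathbb{C}(j)$ is Galois with group $\mathrm{SL}_2(\mathbb{Z}/N\mathbb{Z})/\{\pm1_2\}$ acting by composition.

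For the descent I would first record two transformation laws of the Fricke functions: $f_{(r_1,r_2)}\circ\gamma'=f_{(r_1,r_2)\gamma'}$ for $\gamma'\in\mathrm{SL}_2(\mathbb{Z})$, from the transformation behavior of $\wp$, $g_2$, $g_3$ and $\Delta$; and, by reading off the $q_\tau^{1/N}$-expansion, that $\sigma_d$ applied coefficientwise to $f_{(r_1,r_2)}$ equals $f_{(r_1,r_2)\cdot\mathrm{diag}(1,d)}$ for $d\in(\mathbb{Z}/N\mathbb{Z})^*$. Together with the characterization in Proposition \ref{meaningF_N}(ii) these show that composition with $\gamma'$ and the coefficientwise map $\sum c_nq_\tau^{n/N}\mapsto\sum c_n^{\sigma_d}q_\tau^{n/N}$ both preserve $\mathcal{F}_N$ and fix $\mathcal{F}_1=\mathbb{Q}(j)$ (the latter because $j$ has rational Fourier coefficients). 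Combining them along the decomposition $\mathrm{GL}_2(\mathbb{Z}/N\mathbb{Z})/\{\pm1_2\}=\{\mathrm{diag}(1,d):d\in(\mathbb{Z}/N\mathbb{Z})^*\}\cdot\mathrm{SL}_2(\mathbb{Z}/N\mathbb{Z})/\{\pm1_2\}$ yields a homomorphism $\mathrm{GL}_2(\mathbb{Z}/N\mathbb{Z})/\{\pm1_2\}\to\mathrm{Aut}\big(\mathcal{F}_N/\mathcal{F}_1\big)$ of the asserted shape; it is injective, since an element of its kernel fixes every $f_{(r_1,r_2)}$, forcing its $\mathrm{SL}_2$-part into $\{\pm1_2\}$, and fixes $\zeta_N$, forcing $d=1$.

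It then remains to check that $[\mathcal{F}_N:\mathcal{F}_1]=\big|\mathrm{GL}_2(\mathbb{Z}/N\mathbb{Z})/\{\pm1_2\}\big|$, for the extension will then have at least that many $\mathcal{F}_1$-automorphisms and hence be Galois with group the image of the injection just constructed. By Proposition \ref{meaningF_N}(ii) any element of $\mathbb{C}\cap\mathcal{F}_N$ is its own constant term and so lies in $\mathbb{Q}(\zeta_N)$; since also $\zeta_N\in\mathcal{F}_N$, the field of constants of $\mathcal{F}_N$ is exactly $\mathbb{Q}(\zeta_N)$ and $\mathcal{F}_N$ is a regular extension of it. Base-changing to $\mathbb{C}$ gives $[\mathcal{F}_N:\mathbb{Q}(\zeta_N)(j)]=[\mathbb{C}\mathcal{F}_N:\mathbb{C}(j)]=\big|\mathrm{SL}_2(\mathbb{Z}/N\mathbb{Z})/\{\pm1_2\}\big|$, while $[\mathbb{Q}(\zeta_N)(j):\mathbb{Q}(j)]=\varphi(N)$ because $j$ is transcendental over $\mathbb{Q}$; multiplying these and using $|\mathrm{GL}_2(\mathbb{Z}/N\mathbb{Z})|=\varphi(N)\,|\mathrm{SL}_2(\mathbb{Z}/N\mathbb{Z})|$ (via the determinant surjection) yields the required degree, and with it the theorem.

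The main obstacle is the arithmetic input $\zeta_N\in\mathcal{F}_N$ — equivalently, that the constant field of $\mathcal{F}_N$ is $\mathbb{Q}(\zeta_N)$ rather than something smaller, which is precisely what supplies the factor $\varphi(N)$ in the degree count. Concretely one exhibits $\zeta_N$ as the value of a suitable ratio of Siegel functions (that is, via the Weil pairing on the $N$-torsion) and checks its Fourier expansion; this, together with the explicit $\mathrm{SL}_2(\mathbb{Z})$- and $\sigma_d$-transformation formulas for the Fricke functions, is somewhat technical, so in the paper I would simply invoke \cite{Shimura} Chapter~6, where exactly this proposition is proved.
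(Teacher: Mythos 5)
Your proposal is correct and is in substance the same as what the paper does: the paper proves this proposition simply by citing \cite{Lang} Chapter 6 Theorem 3 (equivalently \cite{Shimura} Chapter 6), and your two-stage sketch --- geometric Galois theory of $\mathbb{C}\big(X(N)\big)/\mathbb{C}(j)$ giving $\mathrm{SL}_2(\mathbb{Z}/N\mathbb{Z})/\{\pm1_2\}$, then descent via the $\mathbb{Q}(\zeta_N)$-rationality of the Fourier coefficients of the Fricke functions together with $\zeta_N\in\mathcal{F}_N$ to supply the factor $\varphi(N)$ --- is precisely the standard argument given in those references. Since you yourself defer the remaining technical points (the constant field being exactly $\mathbb{Q}(\zeta_N)$ and the compatibility making the map a homomorphism) to \cite{Shimura} Chapter 6, your route and the paper's coincide.
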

\begin{proof}
See \cite{Lang} Chapter 6 Theorem 3.
\end{proof}

\begin{proposition}\label{Siegelmodularity} Let
$N\geq2$. A finite product of Siegel functions
\begin{equation*}
\prod_{r=(r_1,~r_2)\in\frac{1}{N}\mathbb{Z}^2\setminus\mathbb{Z}^2}
g_r(\tau)^{m(r)}
\end{equation*}
belongs to $\mathcal{F}_N$ if
\begin{eqnarray*}
&&\sum_r m(r)(Nr_1)^2\equiv\sum_r
m(r)(Nr_2)^2\equiv0\pmod{\gcd(2,~N)\cdot
N}\\
&&\sum_r m(r)(Nr_1)(Nr_2)\equiv0\pmod{N}\\
&&\sum_r m(r)\cdot \gcd(12,~N)\equiv0\pmod{12}.
\end{eqnarray*}
\end{proposition}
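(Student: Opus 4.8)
The plan is to invoke the characterization of $\mathcal{F}_N$ in Proposition \ref{meaningF_N}(ii): writing $F(\tau)=\prod_r g_r(\tau)^{m(r)}$, it suffices to prove (A) that $F$ has weight $0$, is $\Gamma(N)$-invariant and meromorphic at the cusps, hence $F\in\mathbb{C}\big(X(N)\big)$, and (B) that the $q_\tau^{1/N}$-Fourier expansion of $F$ has coefficients in $\mathbb{Q}(\zeta_N)$. Both parts rest on two standard transformation properties of the Siegel functions (see \cite{K-L}), to be combined with (\ref{FourierSiegel}) and (\ref{order}): first, since $g_r=\mathfrak{k}_r\,\eta^2$ with $\mathfrak{k}_r\circ\gamma=(c\tau+d)^{-1}\mathfrak{k}_{r\gamma}$ for $\gamma=\left(\begin{smallmatrix}a&b\\c&d\end{smallmatrix}\right)\in\mathrm{SL}_2(\mathbb{Z})$ (where $r\gamma=(ar_1+cr_2,\,br_1+dr_2)$) and $\eta^2$ of weight $1$ by Proposition \ref{gmodularity}, one gets that $g_r$ has weight $0$ and $g_r\circ\gamma=\chi(\gamma)\,g_{r\gamma}$, where $\chi(\gamma)$ is the root of unity determined by $\eta(\gamma\tau)^2=\chi(\gamma)(c\tau+d)\eta(\tau)^2$; by Proposition \ref{gmodularity}, $\chi$ is a character into $12$th roots of unity which is trivial on $\Gamma(12)$. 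Second, $g_{r+b}(\tau)=(-1)^{b_1+b_2+b_1b_2}e^{-\pi i(b_1r_2-b_2r_1)}g_r(\tau)$ for $b=(b_1,b_2)\in\mathbb{Z}^2$, which follows from the quasi-periodicity of $\sigma$ and (\ref{Fouriereta}) and is consistent with (\ref{FourierSiegel}).

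For the $\Gamma(N)$-invariance in (A), observe that for $\gamma\in\Gamma(N)$ one has $(u,v):=r\gamma-r\in\mathbb{Z}^2$ since $r\in\frac1N\mathbb{Z}^2$ and $\gamma\equiv 1_2\pmod N$; combining the two transformation formulas,
\[
F\circ\gamma=\chi(\gamma)^{M}\prod_r\bigl((-1)^{u+v+uv}\,e^{-\pi i(ur_2-vr_1)}\bigr)^{m(r)}\,F,\qquad M=\sum_r m(r).
\]
I would show the prefactor is $1$ by reducing to generators: the three displayed congruences are stable under the substitution $m(r)\mapsto m(r\gamma^{-1})$, because replacing $r$ by $r\gamma$ turns $\sum_r m(r)(Nr_1)^2$, $\sum_r m(r)(Nr_2)^2$ and $\sum_r m(r)(Nr_1)(Nr_2)$ into $\mathbb{Z}$-linear combinations of these same three sums (by $\det\gamma=1$), and the congruences persist — here the second congruence is exactly what is needed to absorb the cross term; and $\Gamma(N)$ is generated modulo $\pm1_2$ by the $\mathrm{SL}_2(\mathbb{Z})$-conjugates of $\left(\begin{smallmatrix}1&N\\0&1\end{smallmatrix}\right)$, so it is enough to treat $\gamma=\left(\begin{smallmatrix}1&N\\0&1\end{smallmatrix}\right)$ (and $-1_2$ when $N\le2$). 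For this $\gamma$ one has $r\gamma-r=(0,Nr_1)$, and the formulas give $F\circ\gamma=\chi(\gamma)^{M}\,(-1)^{\sum_r m(r)(Nr_1)}\,e^{\pi i\sum_r m(r)(Nr_1)^2/N}\,F$; now $\chi(\gamma)^M=\zeta_{12}^{NM}$, which equals $1$ precisely when $\gcd(12,N)M\equiv0\pmod{12}$ (the third condition), while the first condition together with $x^2\equiv x\pmod2$ forces $(-1)^{\sum m(Nr_1)}e^{\pi i\sum m(Nr_1)^2/N}=1$. Hence $F$ is $\Gamma(N)$-invariant.

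To complete (A) and obtain (B): on $\mathfrak{H}$ each $g_r$, hence $F$, is holomorphic and nonvanishing (the remark after (\ref{FourierSiegel})). At $\infty$, (\ref{FourierSiegel}) and (\ref{order}) give $F(\tau)=c\,q_\tau^{\rho}\big(1+O(q_\tau^{1/N})\big)$ with $\rho=\sum_r m(r)\tfrac12\mathbf{B}_2(\langle r_1\rangle)$; expanding $\mathbf{B}_2$ and using the three congruences (and $x^2\equiv x\pmod2$) one checks $\rho\in\frac1N\mathbb{Z}$, so $F$ is meromorphic at $\infty$, and at any cusp $\gamma\cdot\infty$ the same follows from $F\circ\gamma=\chi(\gamma)^{M}\prod_r g_r^{\,m(r\gamma^{-1})}$ since the congruences persist for $m(r\gamma^{-1})$; thus $F\in\mathbb{C}\big(X(N)\big)$. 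Moreover, in (\ref{FourierSiegel}) we may write $q_z=\zeta_N^{Nr_2}q_\tau^{r_1}$ with $Nr_1,Nr_2\in\mathbb{Z}$, so the whole product expansion has coefficients in $\mathbb{Z}[\zeta_N]$ apart from the leading constant $c$, which under the imposed congruences itself lies in $\mathbb{Q}(\zeta_N)$; hence the $q_\tau^{1/N}$-expansion of $F$ has coefficients in $\mathbb{Q}(\zeta_N)$, and Proposition \ref{meaningF_N}(ii) yields $F\in\mathcal{F}_N$.

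The main obstacle — and essentially the whole content of the argument — is the root-of-unity bookkeeping: one must carry the multiplier $\chi(\gamma)$ and the factor $(-1)^{b_1+b_2+b_1b_2}e^{-\pi i(b_1r_2-b_2r_1)}$ through the finite product and verify that the three congruences (together with $x^2\equiv x\pmod2$ and $\det\gamma=1$) are precisely what make every resulting root of unity trivial, both for the $\Gamma(N)$-invariance and for the rationality of the Fourier coefficients at each cusp. Everything else — weight $0$, holomorphy and nonvanishing on $\mathfrak{H}$, meromorphy at the cusps, and the reduction to the final criterion — is immediate from Propositions \ref{gFourier}, \ref{gmodularity}, \ref{meaningF_N} and the formulas (\ref{FourierSiegel}), (\ref{order}).
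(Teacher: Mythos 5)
The paper itself gives no argument for this proposition (it is quoted from Kubert--Lang, \emph{Modular Units}, Ch.~3, Theorems 5.2 and 5.3), so you are attempting a direct verification; unfortunately its central step fails. Your proof of $\Gamma(N)$-invariance reduces everything to $T^N=\left(\begin{smallmatrix}1&N\\0&1\end{smallmatrix}\right)$ via the claim that $\Gamma(N)$ is generated modulo $\pm1_2$ by the $\mathrm{SL}_2(\mathbb{Z})$-conjugates of $T^N$. That claim is false for $N\geq6$: imposing the relation $T^N=1$ on $\mathrm{PSL}_2(\mathbb{Z})=\langle S,T\mid S^2=(ST)^3=1\rangle$ yields the $(2,3,N)$ triangle group, which is infinite as soon as $\tfrac12+\tfrac13+\tfrac1N\leq1$, while $\mathrm{PSL}_2(\mathbb{Z}/N\mathbb{Z})$ is finite; hence the normal closure of $T^N$ has infinite index in $\Gamma(N)$ for $N\geq6$ (the generation statement is true only for $N\leq5$). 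Your observation that $\gamma\mapsto c(\gamma)$ with $F\circ\gamma=c(\gamma)F$ is a character of $\Gamma(N)$ is correct, and your computation does show $c=1$ on $T^N$ and its conjugates (the stability of the congruences under $r\mapsto r\gamma$ is also fine), but this only proves invariance under the normal closure of $T^N$, which is far from all of $\Gamma(N)$ precisely in the range the paper uses ($N\geq8$, $4\mid N$). Since $\Gamma(N)/\{\pm1_2\}$ is free of large rank with no convenient finite list of standard generators, the honest route is the one Kubert--Lang actually take: compute the multiplier of $g_r\circ\gamma$ for an \emph{arbitrary} $\gamma\in\Gamma(N)$, carrying the $\eta^2$-multiplier (a $\mu_{12}$-valued character of $\Gamma(N)$, trivial only on $\Gamma(12)$) together with the translation factor $\varepsilon(r,\,r\gamma-r)$, and verify that the three congruences annihilate the resulting product of roots of unity. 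That computation is the whole content of the cited theorems and is absent from your argument.

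A second, smaller gap: the assertion that the leading Fourier coefficient lies in $\mathbb{Q}(\zeta_N)$ ``under the imposed congruences'' is stated without proof, and it is not a formality when $N$ is even. After using the second congruence, the constant in front of the expansion is a priori only $\pm\zeta_{2N}^{k}$ for some integer $k$, and $\zeta_{2N}\notin\mathbb{Q}(\zeta_N)$ for even $N$, so an extra parity argument is needed. Indeed, for $N=2$ the product $\big(g_{(\frac12,0)}g_{(0,\frac12)}g_{(\frac12,\frac12)}\big)^2$ satisfies all three displayed congruences, yet a direct computation with (\ref{FourierSiegel}) (equivalently, the classical identity $\theta_2\theta_3\theta_4=2\eta^3$) shows it is identically the constant $4i$, whose expansion coefficient is not in $\mathbb{Q}(\zeta_2)=\mathbb{Q}$; so the rationality at the cusp genuinely depends on careful root-of-unity bookkeeping (and on the precise hypotheses in Kubert--Lang), and cannot be waved through. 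For odd $N$ your claim is harmless, since $\zeta_{2N}\in\mathbb{Q}(\zeta_N)$ there. In summary: the weight-$0$, holomorphy, order-at-cusps and $q$-integrality parts of your plan are fine, but the two places you describe as ``bookkeeping'' --- invariance under all of $\Gamma(N)$ and rationality of the constants --- are exactly where the proof lives, and as written neither is established.
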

\begin{proof}
See \cite{K-L} Chapter 3 Theorems 5.2 and 5.3.
\end{proof}

\section{Normalization of $\wp'$ by Dedekind
$\eta$-function and some geometry}\label{section3}

Let $L$ be a lattice in $\mathbb{C}$. An elliptic curve $E$ (over
$\mathbb{C}$) with invariant $j(L)$ has an analytic parametrization
in the projective plane $\mathbb{P}^2(\mathbb{C})$ with homogeneous
coordinates $[X:Y:Z]$ via
\begin{eqnarray}\label{analytic}
\varphi~:~\mathbb{C}/L&\stackrel{\sim}{\longrightarrow}&E~:~Y^2Z=4X^3-g_2(L)XZ^2-g_3(L)Z^3\\
z&\mapsto&[\wp(z;~L):\wp'(z;~L):1]\nonumber
\end{eqnarray}
(\cite{Silverman} Chapter VI Proposition 3.6(b)). And we have a
relation
\begin{equation}\label{p'sigma}
\wp'(z;~L)=-\frac{\sigma(2z;~L)}{\sigma(z;~L)^4}
\end{equation}
(\cite{Silverman} p. 166).
\par
Let $N\geq2$ and $L=[\tau,~1]$ with $\tau\in\mathfrak{H}$ as a
variable. Furthermore, let $z=r_1\tau+r_2$ with
$(r_1,~r_2)\in\frac{1}{N}\mathbb{Z}^2\setminus\mathbb{Z}^2$. By
(\ref{analytic}) and (\ref{p'sigma}) the Weierstrass equation
satisfies
\begin{equation}\label{first}
\frac{\sigma(2r_1\tau+2r_2;~[\tau,~1])^2}{\sigma(r_1\tau+r_2;~[\tau,~1])^8}=4\wp(r_1\tau+r_2;~[\tau,~1])^3-g_2(\tau)\wp(r_1\tau+r_2;~[\tau,~1])-g_3(\tau).
\end{equation}
Now we set
\begin{eqnarray}\label{setting}
u(\tau)=\frac{g_2(\tau)^3}{\eta(\tau)^{24}},~
v(\tau)=\frac{g_3(\tau)}{\eta(\tau)^{12}},~
x_{(r_1,~r_2)}(\tau)=-\frac{1}{2^73^5}f_{(r_1,~r_2)}(\tau),~
y_{(r_1,~r_2)}(\tau)=-\frac{g_{(2r_1,~2r_2)}(\tau)}{g_{(r_1,~r_2)}(\tau)^4}.
\end{eqnarray}
Then one can readily check that the equation (\ref{first}) becomes
\begin{equation}\label{second}
u(\tau)v(\tau)^3y_{(r_1,~r_2)}(\tau)^2=4x_{(r_1,~r_2)}(\tau)^3-u(\tau)v(\tau)^2x(\tau)-u(\tau)v(\tau)^4.
\end{equation}
Moreover, by (\ref{Delta}) and (\ref{Deltaeta}) we have an
additional relation
\begin{equation}\label{additional}
u(\tau)-27v(\tau)^2=1.
\end{equation}
Combining (\ref{second}) and (\ref{additional}) we further obtain
some geometric fact. To this end we first need the following lemma.

\begin{lemma}\label{same}
\begin{itemize}
\item[(i)] Let $\tau_1,~\tau_2\in\mathfrak{H}$. Then
$j(\tau_1)=j(\tau_2)$ if and only if $\tau_2=\gamma(\tau_1)$ for
some $\gamma\in\mathrm{SL}_2(\mathbb{Z})$.
\item[(ii)] Let $L$ be a lattice in $\mathbb{C}$ and
$z_1,~z_2\in\mathbb{C}\setminus L$. Then $\wp(z_1;~L)=\wp(z_2;~L)$
if and only if $z_1\equiv\pm z_2\pmod{L}$.
\item[(iii)]
For $(r_1,~r_2)\in\mathbb{Q}^2\setminus\mathbb{Z}^2$ we have
\begin{eqnarray*}
\begin{array}{lllll}
g_{(r_1,~r_2)}(\tau)\circ\begin{pmatrix}0&-1\\1&0\end{pmatrix}&=&
\zeta_{12}^9g_{(r_1,~r_2)\left(\begin{smallmatrix}0&-1\\1&0\end{smallmatrix}\right)}(\tau)
&=&\zeta_{12}^9g_{(r_2,~-r_1)}(\tau)\\
g_{(r_1,~r_2)}(\tau)\circ\begin{pmatrix}1&1\\0&1\end{pmatrix}
&=&\zeta_{12}g_{(r_1,~r_2)\left(\begin{smallmatrix}1&1\\0&1\end{smallmatrix}\right)}(\tau)&=&\zeta_{12}g_{(r_1,~r_1+r_2)}(\tau).
\end{array}
\end{eqnarray*}
\end{itemize}
\end{lemma}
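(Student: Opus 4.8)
The plan is to handle the three parts separately, since each is a classical fact assembled here in the precise form needed in Section~\ref{section3}. For part~(i), the ``if'' direction is immediate from Proposition~\ref{gmodularity}: $j(\tau)$ is modular of weight $0$ for $\Gamma(1)=\mathrm{SL}_2(\mathbb{Z})$, so $j(\gamma(\tau_1))=j(\tau_1)$ for every $\gamma\in\mathrm{SL}_2(\mathbb{Z})$. For the converse I would invoke the standard fundamental-domain argument: every $\tau\in\mathfrak{H}$ is $\mathrm{SL}_2(\mathbb{Z})$-equivalent to a point of the standard fundamental domain $\{\tau\in\mathfrak{H}:|\tau|\geq1,\ |\mathrm{Re}\,\tau|\leq\tfrac{1}{2}\}$, on which $j$ is injective (an argument-principle count on a truncated fundamental domain shows that $j$ assumes each complex value exactly once). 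Hence $j(\tau_1)=j(\tau_2)$ forces the $\mathrm{SL}_2(\mathbb{Z})$-orbits of $\tau_1$ and $\tau_2$ to coincide; alternatively, equal $j$-invariants mean $\mathbb{C}/[\tau_1,1]\cong\mathbb{C}/[\tau_2,1]$ as elliptic curves, so the lattices $[\tau_1,1]$ and $[\tau_2,1]$ are homothetic, which is precisely $\tau_2=\gamma(\tau_1)$ for some $\gamma\in\mathrm{SL}_2(\mathbb{Z})$ (see \cite{Lang} Chapter~3 or \cite{Silverman} Chapter~I).

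For part~(ii), fix $z_1\in\mathbb{C}\setminus L$ and regard $f(z)=\wp(z;L)-\wp(z_1;L)$ as an elliptic function with respect to $L$. By~(\ref{p-function}) its only pole in a fundamental period parallelogram is a double pole at $z\equiv0$, so $f$ has exactly two zeros there, counted with multiplicity. Since $\wp$ is even, $\wp(-z_1;L)=\wp(z_1;L)$, so both $z_1$ and $-z_1$ are zeros of $f$: if $z_1\not\equiv-z_1\pmod{L}$ these are two distinct simple zeros exhausting the zero set, while if $z_1\equiv-z_1\pmod{L}$ then $\wp'(z_1;L)=0$ (as $\wp'$ is odd and $L$-periodic) and $z_1$ is a double zero, again exhausting it. In either case $\wp(z_2;L)=\wp(z_1;L)$ forces $z_2\equiv\pm z_1\pmod{L}$, and the reverse implication is clear from the evenness and $L$-periodicity of $\wp$ (cf.\ \cite{Silverman} Chapter~VI).

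For part~(iii), I would start from the transformation law of the Klein forms (\cite{K-L} Chapter~2): letting $\mathrm{SL}_2(\mathbb{Z})$ act on row vectors of $\mathbb{Q}^2$ by right multiplication, for $\gamma=\left(\begin{smallmatrix}a&b\\c&d\end{smallmatrix}\right)\in\mathrm{SL}_2(\mathbb{Z})$ one has $\mathfrak{k}_{(r_1,r_2)}(\gamma(\tau))=(c\tau+d)^{-1}\,\mathfrak{k}_{(r_1,r_2)\gamma}(\tau)$, which follows from the homogeneity of the two-variable Klein form applied with scaling factor $c\tau+d$ together with $[\gamma(\tau),1]=(c\tau+d)^{-1}[\tau,1]$. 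Since $g_{(r_1,r_2)}=\mathfrak{k}_{(r_1,r_2)}\,\eta^2$ and, from~(\ref{Fouriereta}) and the functional equation of $\eta$, $\eta(-1/\tau)^2=-i\tau\,\eta(\tau)^2=\zeta_{12}^{9}\tau\,\eta(\tau)^2$ and $\eta(\tau+1)^2=e^{\pi i/6}\eta(\tau)^2=\zeta_{12}\,\eta(\tau)^2$, the automorphy factors $(c\tau+d)^{\pm1}$ cancel: with $\gamma=\left(\begin{smallmatrix}0&-1\\1&0\end{smallmatrix}\right)$ one has $c\tau+d=\tau$ and $(r_1,r_2)\gamma=(r_2,-r_1)$, giving the first identity; with $\gamma=\left(\begin{smallmatrix}1&1\\0&1\end{smallmatrix}\right)$ one has $c\tau+d=1$ and $(r_1,r_2)\gamma=(r_1,r_1+r_2)$, giving the second. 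The only point requiring care is the bookkeeping of the roots of unity in the $\eta^2$-multipliers, i.e., that $-i=\zeta_{12}^{9}$ and $e^{\pi i/6}=\zeta_{12}$; beyond that I do not expect any substantive obstacle, the lemma being a compilation of standard facts recast in the exact shape used later.
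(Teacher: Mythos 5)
Your proposal is correct. Note that the paper offers no argument of its own for this lemma: it simply cites \cite{Cox} Theorem 10.9 and Lemma 10.4 for parts (i) and (ii), and \cite{K-S} Proposition 2.4(2) for part (iii). What you have written are precisely the standard proofs underlying those citations: the fundamental-domain/homothety argument for the injectivity of $j$ on $\mathrm{SL}_2(\mathbb{Z})$-orbits, the order-two elliptic function count for $\wp(z;L)-\wp(z_1;L)$ (including the half-period case where $z_1\equiv-z_1\pmod L$ gives a double zero via $\wp'(z_1;L)=0$), and for (iii) the homogeneity-derived transformation $\mathfrak{k}_{(r_1,r_2)}(\gamma(\tau))=(c\tau+d)^{-1}\mathfrak{k}_{(r_1,r_2)\gamma}(\tau)$ combined with the $\eta^2$-multipliers. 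Your bookkeeping is right: the constant $\sqrt{2\pi}\zeta_8$ in the paper's normalization (\ref{Fouriereta}) does not affect the multiplier system, $(c\tau+d)^{\pm1}$ cancels between $\mathfrak{k}$ and $\eta^2$, and $-i=\zeta_{12}^9$, $e^{\pi i/6}=\zeta_{12}$, $(r_1,r_2)\left(\begin{smallmatrix}0&-1\\1&0\end{smallmatrix}\right)=(r_2,-r_1)$, $(r_1,r_2)\left(\begin{smallmatrix}1&1\\0&1\end{smallmatrix}\right)=(r_1,r_1+r_2)$ all check out. So your write-up is a correct, self-contained substitute for the paper's reference-only proof, at the cost of reproving classical facts the authors chose to quote.
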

\begin{proof}
See \cite{Cox} Theorem 10.9, Lemma 10.4 and \cite{K-S} Proposition
2.4(2).
\end{proof}

\begin{proposition}\label{surface}
Let $\mathbb{P}^3(\mathbb{C})$ be the projective space with
homogeneous coordinates $[V:X:Y:Z]$ and $S$ be a surface in
$\mathbb{P}^3(\mathbb{C})$ given by the homogeneous equation
\begin{equation*}
(Z^2+27V^2)V^3Y^2=4X^3Z^4-(Z^2+27V^2)V^2XZ^2-(Z^2+27V^2)V^4Z.
\end{equation*}
Let $\Gamma_{1,~4}(N)$ be the congruence subgroup
$\Gamma_1(N)\cap\Gamma(4)$ where $\Gamma_1(N)=\big\{
\left(\begin{smallmatrix}a&b\\c&d\end{smallmatrix}\right)
\in\mathrm{SL}_2(\mathbb{Z})~:~\left(\begin{smallmatrix}a&b\\c&d\end{smallmatrix}\right)\equiv\left(\begin{smallmatrix}1&*\\0&1\end{smallmatrix}\right)
\pmod{N}\big\}$ and $X_{1,~4}(N)$ be its corresponding modular curve
$\Gamma_{1,~4}(N)\backslash\mathfrak{H}^*$. If $4~|~N$, then we have
a holomorphic map
\begin{eqnarray*}
\iota~:~X_{1,~4}(N)&\longrightarrow& S\\
\tau&\mapsto&[v(\tau):x_{(0,~\frac{1}{N})}(\tau):y_{(0,~\frac{1}{N})}(\tau):1].\nonumber
\end{eqnarray*}
In particular, if $M$ is the image of
$\{\textrm{cusps},~\alpha(\zeta_3),~\alpha(\zeta_4):\alpha\in\mathrm{SL}_2(\mathbb{Z})\}$
via the natural quotient map $\mathfrak{H}^*\rightarrow
X_{1,~4}(N)$, then the restriction morphism
$\iota:X_{1,~4}(N)\setminus M\rightarrow S$ gives an embedding into
$\mathbb{P}^3(\mathbb{C})$.
\end{proposition}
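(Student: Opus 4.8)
The plan is first to check that the three coordinate functions of $\iota$ are weight‑$0$ modular functions for $\Gamma_{1,4}(N)$, so that $\iota$ is at the outset a holomorphic map on $\mathfrak{H}/\Gamma_{1,4}(N)$; then to show its image satisfies the equation of $S$ and extends to a morphism; and finally to prove injectivity and the immersion property on $X_{1,4}(N)\setminus M$. For the modularity: $v(\tau)=g_3(\tau)/\eta(\tau)^{12}$ has weight $6-6=0$ and is $\Gamma(2)$‑invariant by Proposition \ref{gmodularity}, hence $\Gamma_{1,4}(N)$‑invariant. For $x_{(0,1/N)}=-f_{(0,1/N)}/(2^73^5)$, the law $f_{(r_1,r_2)}\circ\gamma=f_{(r_1,r_2)\gamma}$ for $\gamma\in\mathrm{SL}_2(\mathbb{Z})$, the $L$‑periodicity of $\wp(\,\cdot\,;L)$ (so that $f_{(r_1,r_2)}$ depends only on $(r_1,r_2)\bmod\mathbb{Z}^2$), and $(0,1/N)\gamma\equiv(0,1/N)\pmod{\mathbb{Z}^2}$ for $\gamma\in\Gamma_1(N)$ together give $\Gamma_1(N)$‑invariance, hence $\Gamma_{1,4}(N)$‑invariance. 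Finally $y_{(0,1/N)}=-g_{(0,2/N)}/g_{(0,1/N)}^{4}$, which equals $\wp'(1/N;[\tau,1])/\eta(\tau)^6$ by (\ref{p'sigma}) and the definitions of the Klein and Siegel functions (the exponential prefactors cancel), has weight $0$; since $4\mid N$ the exponent data of this product of Siegel functions satisfy the congruences of Proposition \ref{Siegelmodularity} (the first three reduce to $0\equiv0$, $1\cdot2^2+(-4)\cdot1^2\equiv0$, $0\equiv0$ and the fourth to $-3\gcd(12,N)\equiv0\pmod{12}$, which is exactly $4\mid N$), so $y_{(0,1/N)}\in\mathcal{F}_N$ and is $\Gamma(N)$‑invariant by Proposition \ref{meaningF_N}; moreover $\Gamma_{1,4}(N)$ is generated by $\Gamma(N)$ and $T^4$ where $T=\left(\begin{smallmatrix}1&1\\0&1\end{smallmatrix}\right)$ (again because $4\mid N$), and Lemma \ref{same}(iii) gives $g_{(0,r)}\circ T=\zeta_{12}g_{(0,r)}$, so $y_{(0,1/N)}\circ T^{4}=\zeta_{12}^{4-16}\,y_{(0,1/N)}=y_{(0,1/N)}$; hence $y_{(0,1/N)}$ is $\Gamma_{1,4}(N)$‑invariant.

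Since the $\eta$‑ and Siegel functions and $\Delta$ have no zeros on $\mathfrak{H}$ and $\wp(1/N;[\tau,1])$ is finite for $N\geq2$, the three functions are holomorphic on $\mathfrak{H}$, and as the last coordinate is $1$ the assignment $\tau\mapsto[v(\tau):x_{(0,1/N)}(\tau):y_{(0,1/N)}(\tau):1]$ is a well‑defined holomorphic map $\mathfrak{H}/\Gamma_{1,4}(N)\to\mathbb{P}^3(\mathbb{C})$. Setting $[V:X:Y:Z]=[v:x_{(0,1/N)}:y_{(0,1/N)}:1]$ in the equation of $S$ and replacing $Z^2+27V^2$ by $u$ via (\ref{additional}) turns that equation into (\ref{second}), which is an identity; so the image lies in $S$. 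Because $\Gamma_{1,4}(N)\subseteq\Gamma(4)$ is torsion‑free, $X_{1,4}(N)$ is a smooth projective curve, so this rational map from a smooth curve into projective space extends uniquely to a morphism $\iota:X_{1,4}(N)\to\mathbb{P}^3(\mathbb{C})$, whose image remains inside the closed set $S$ by density.

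For injectivity on $X_{1,4}(N)\setminus M$, assume $\iota(\tau_1)=\iota(\tau_2)$ with $\tau_1,\tau_2\in\mathfrak{H}$ representing points off $M$; since both image points lie in the chart $Z=1$, we get $v(\tau_1)=v(\tau_2)$, $x_{(0,1/N)}(\tau_1)=x_{(0,1/N)}(\tau_2)$ and $y_{(0,1/N)}(\tau_1)=y_{(0,1/N)}(\tau_2)$. From $j=2^63^3(1+27v^2)$ and the first equality, $j(\tau_1)=j(\tau_2)$, so $\tau_2=\gamma\tau_1$ with $\gamma=\left(\begin{smallmatrix}a&b\\c&d\end{smallmatrix}\right)\in\mathrm{SL}_2(\mathbb{Z})$ by Lemma \ref{same}(i). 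Writing $x_{(r_1,r_2)}(\tau)=(g_2g_3/\Delta)(\tau)\,\wp(r_1\tau+r_2;[\tau,1])$ and using $x_{(0,1/N)}\circ\gamma=x_{(0,1/N)\gamma}$ together with $(g_2g_3/\Delta)(\tau_1)\neq0$ (valid since $\tau_1\notin M$, by Proposition \ref{gFourier}(ii)), the second equality gives $\wp(1/N;[\tau_1,1])=\wp((c/N)\tau_1+d/N;[\tau_1,1])$; Lemma \ref{same}(ii) and the $\mathbb{R}$‑linear independence of $\tau_1$ and $1$ then force $N\mid c$ and $d\equiv\pm1\pmod N$, whence $a\equiv\pm1\pmod N$ with the same sign from $\det\gamma=1$. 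Replacing $\gamma$ by $-\gamma$ if necessary (which does not change $\gamma\tau_1=\tau_2$), we may assume $\gamma\in\Gamma_1(N)$. As $4\mid N$, the group $\Gamma_1(N)/\Gamma_{1,4}(N)$ is cyclic of order $4$ generated by the class of $T$, so $\gamma=\delta T^{k}$ with $\delta\in\Gamma_{1,4}(N)$ and $0\leq k\leq3$; thus $\gamma\tau_1=\delta(T^{k}\tau_1)$ and, by $\Gamma_{1,4}(N)$‑invariance of $y_{(0,1/N)}$, $y_{(0,1/N)}(\gamma\tau_1)=y_{(0,1/N)}(\tau_1+k)$. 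Since $\eta(\tau+1)=\zeta_{24}\eta(\tau)$ we have $y_{(0,1/N)}(\tau+1)=\zeta_{24}^{-6}y_{(0,1/N)}(\tau)=-i\,y_{(0,1/N)}(\tau)$, so the third equality reads $y_{(0,1/N)}(\tau_1)=(-i)^{k}y_{(0,1/N)}(\tau_1)$. But $y_{(0,1/N)}(\tau_1)\neq0$, because $1/N$ is not $2$‑torsion for $N\geq3$ and hence $\wp'(1/N;[\tau_1,1])\neq0$; therefore $(-i)^{k}=1$, i.e. $k=0$, so $\gamma\in\Gamma_{1,4}(N)$ and $\tau_1,\tau_2$ represent the same point of $X_{1,4}(N)$.

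For the immersion property, fix $[\tau_0]\in X_{1,4}(N)\setminus M$; since the $\Gamma_{1,4}(N)$‑action is free, $\tau$ is a local coordinate near $[\tau_0]$, and in the chart $Z=1$ the map is $\tau\mapsto(v(\tau),x_{(0,1/N)}(\tau),y_{(0,1/N)}(\tau))$. From $j=2^63^3(1+27v^2)$ we get $j'(\tau_0)=2^63^3\cdot54\,v(\tau_0)v'(\tau_0)$, and $j'(\tau_0)\neq0$ because $\tau_0$ is not $\mathrm{SL}_2(\mathbb{Z})$‑equivalent to $\zeta_3$ or $\zeta_4$; hence $v'(\tau_0)\neq0$ and the differential of $\iota$ at $[\tau_0]$ is nonzero. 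So $\iota$ is an injective holomorphic immersion on $X_{1,4}(N)\setminus M$, and being injective both on points and on tangent spaces it is an isomorphism onto a locally closed nonsingular curve in $\mathbb{P}^3(\mathbb{C})$, that is, an embedding. The crux is the injectivity — precisely the step from ``$\gamma\in\pm\Gamma_1(N)$'' to ``$\gamma\in\Gamma_{1,4}(N)$'', which hinges on the nontrivial factor $\zeta_{24}^{-6}=-i$ by which $\eta^6$ transforms under $T$; this is exactly where, as already in the $\Gamma_{1,4}(N)$‑invariance of $y_{(0,1/N)}$ obtained via Proposition \ref{Siegelmodularity}, the hypothesis $4\mid N$ is indispensable, $i$ having order $4$.
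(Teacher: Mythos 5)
Your proof is correct and follows essentially the same route as the paper: establish $\Gamma_{1,4}(N)$-invariance of $v$, $x_{(0,\frac{1}{N})}$, $y_{(0,\frac{1}{N})}$ (via Propositions \ref{gmodularity}, \ref{Siegelmodularity} and the transformation of $y_{(0,\frac{1}{N})}$ under $T$, where $4\mid N$ enters), deduce the image lies in $S$ from (\ref{second}) and (\ref{additional}), and prove injectivity off $M$ by passing from $j(\tau_1)=j(\tau_2)$ and the Fricke-function/$\wp$ argument to $\gamma\in\Gamma_1(N)$, then using the order-$4$ root of unity picked up by $y_{(0,\frac{1}{N})}$ under $T$ to force $\gamma\in\Gamma_{1,4}(N)$. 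Your use of the quotient $\Gamma_1(N)/\Gamma_{1,4}(N)\cong\mathbb{Z}/4\mathbb{Z}$ in place of the paper's word decomposition $(\gamma_1T^{e_1})\cdots(\gamma_nT^{e_n})$ is a cosmetic difference, and your added check that $v'(\tau_0)\neq0$ (so $\iota$ is an immersion off $M$) is a welcome extra that the paper leaves implicit.
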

\begin{proof}
Let $4~|~N$. Since the functions
$v(\tau),~x_{(0,~\frac{1}{N})}(\tau),~y_{(0,~\frac{1}{N})}(\tau),~1$
are not all identically zero, the map $\iota$ extends to a
holomorphic map defined on all of the modular curve
$X_{1,~4}(N)$(\cite{Miranda} Chapter V Lemma 4.2) and its image is
contained in $S$ by (\ref{second}) and (\ref{additional}) provided
that it is well-defined.
\par
Since $v(\tau)\in\mathbb{C}\big(X(2)\big)$ by Proposition
\ref{gmodularity}, $v(\tau)\in\mathbb{C}\big(X_{1,~4}(N)\big)$. And,
$x_{(0,~\frac{1}{N})}(\tau)\in\mathcal{F}_N$ by definition
(\ref{defF_N}) and $y_{(0,~\frac{1}{N})}(\tau)\in\mathcal{F}_N$ by
Proposition \ref{Siegelmodularity}. Here we observe that
$\Gamma_1(N)=\big\langle\Gamma(N),~T=\left(\begin{smallmatrix}1&1\\0&1\end{smallmatrix}\right)\big\rangle$.
We then obtain by definition (\ref{Fricke}) and Proposition
\ref{gmodularity} that
\begin{eqnarray*}
x_{(0,~\frac{1}{N})}(\tau)\circ T&=&
\frac{g_2\big(T(\tau)\big)g_3\big(T(\tau)\big)\wp\big(\frac{1}{N};~[T(\tau),~1]\big)}{\Delta\big(T(\tau)\big)}\\
&=&\frac{g_2(\tau)g_3(\tau)\wp(\frac{1}{N};~[\tau+1,~1])}{\Delta(\tau)}\\
&=&\frac{g_2(\tau)g_3(\tau)\wp(\frac{1}{N};~[\tau,~1])}{\Delta(\tau)}~=~x_{(0,~\frac{1}{N})}(\tau),
\end{eqnarray*}
from which we get
$x_{(0,~\frac{1}{N})}(\tau)\in\mathbb{C}\big(X_1(N)\big)\subseteq\mathbb{C}\big(X_{1,~4}(N)\big)$.
On the other hand, if
$\gamma\in\Gamma_{1,~4}(N)(\subseteq\Gamma_1(N))$, then $\gamma$ is
of the form $(\gamma_1T^{e_1})\cdots(\gamma_nT^{e_n})$ for some
$\gamma_1,\cdots,\gamma_n\in\Gamma(N)$ and
$e_1,\cdots,e_n\in\mathbb{Z}$ such that
$e_1+\cdots+e_n\equiv0\pmod4$. Thus we derive from the fact
$y_{(0,~\frac{1}{N})}(\tau)\in\mathcal{F}_N$ that
\begin{eqnarray*}
y_{(0,~\frac{1}{N})}(\tau)\circ\gamma&=&
\bigg(\frac{g_{(0,~\frac{2}{N})}(\tau)}{g_{(0,~\frac{1}{N})}(\tau)^4}\bigg)\circ\gamma
=\bigg(\frac{g_{(0,~\frac{2}{N})}(\tau)}{g_{(0,~\frac{1}{N})}(\tau)^4}\bigg)\circ(\gamma_1T^{e_1})\cdots(\gamma_nT^{e_n})\\
&=&\zeta_{12}^{-3(e_1+\cdots+e_n)}\frac{g_{(0,~\frac{2}{N})}(\tau)}{g_{(0,~\frac{1}{N})}(\tau)^4}\quad\textrm{by
Lemma \ref{same}}\\
&=&\frac{g_{(0,~\frac{2}{N})}(\tau)}{g_{(0,~\frac{1}{N})}(\tau)^4}~=~y_{(0,~\frac{1}{N})}(\tau)
\quad\textrm{by the fact $e_1+\cdots+e_n\equiv0\pmod4$},
\end{eqnarray*}
which yields
$y_{(0,~\frac{1}{N})}(\tau)\in\mathbb{C}\big(X_{1,~4}(N)\big)$.
Hence the map $\iota$ is well-defined.
\par
Now, assume $\iota(\tau_1)=\iota(\tau_2)$ for some points
$\tau_1,~\tau_2\in\mathfrak{H}^*\setminus\{\textrm{cusps},~\alpha(\zeta_3),~\alpha(\zeta_4):\alpha\in\mathrm{SL}_2(\mathbb{Z})\}$.
Then we deduce by definitions (\ref{j}) and (\ref{setting}) that
\begin{equation*}
j(\tau_1)=j(\tau_2),\quad
f_{(0,~\frac{1}{N})}(\tau_1)=f_{(0,~\frac{1}{N})}(\tau_2)\quad\textrm{and}\quad
\frac{g_{(0,~\frac{2}{N})}(\tau_1)}{g_{(0,~\frac{1}{N})}(\tau_1)^4}=
\frac{g_{(0,~\frac{2}{N})}(\tau_2)}{g_{(0,~\frac{1}{N})}(\tau_2)^4}.
\end{equation*}
(Note that all functions $v(\tau)$, $x_{(0,~\frac{1}{N})}(\tau)$,
$y_{(0,~\frac{1}{N})}(\tau)$ do not have poles on $\mathfrak{H}$.)
So we get $\tau_2=\gamma(\tau_1)$ for some
$\gamma=\left(\begin{smallmatrix}a&b\\c&d\end{smallmatrix}\right)\in\mathrm{SL}_2(\mathbb{Z})$
by the fact $j(\tau_1)=j(\tau_2)$ and Lemma \ref{same}(i). Moreover,
it follows from the fact
$f_{(0,~\frac{1}{N})}(\tau_1)=f_{(0,~\frac{1}{N})}(\tau_2)$ and
definition (\ref{Fricke}) that
\begin{eqnarray*}
&&\frac{g_2(\tau_1)g_3(\tau_1)\wp(\frac{1}{N};~[\tau_1,~1])}{\Delta(\tau_1)}
=\frac{g_2(\tau_2)g_3(\tau_2)\wp(\frac{1}{N};~[\tau_2,~1])}{\Delta(\tau_2)}\\
&=&\frac{g_2\big(\gamma(\tau_1)\big)g_3\big(\gamma(\tau_1)\big)\wp(\frac{1}{N};~[\gamma(\tau_1),~1])}{\Delta\big(\gamma(\tau_1)\big)}
=\frac{g_2(\tau_1)g_3(\tau_1)\wp(\frac{1}{N}(c\tau_1+d);~[\tau_1,~1])}{\Delta(\tau_1)}
\end{eqnarray*}
due to Proposition \ref{gmodularity} and definition
(\ref{p-function}). And, we achieve by Proposition
\ref{gFourier}(ii) and Lemma \ref{same}(ii)
\begin{equation*}
\frac{1}{N}\equiv\pm\frac{1}{N}(c\tau_1+d)\pmod{[\tau_1,~1]},
\end{equation*}
from which we have $c\equiv0\pmod{N}$ and $d\equiv\pm1\pmod{N}$.
Hence the relation $\det(\gamma)=ad-bc=1$ implies $a\equiv
d\equiv\pm1\pmod{N}$. Thus we may assume that $\gamma$ belongs to
the congruence subgroup $\Gamma_1(N)$ because $\gamma$ and $-\gamma$
give rise to the same linear fractional transformation. On the other
hand, since
$\Gamma_1(N)=\big\langle\Gamma(N),~T=\left(\begin{smallmatrix}1&1\\0&1\end{smallmatrix}\right)\big\rangle$,
$\gamma$ is of the form $(\gamma_1T^{e_1})\cdots(\gamma_nT^{e_n})$
for some $\gamma_1,\cdots,\gamma_n\in\Gamma(N)$ and
$e_1,\cdots,e_n\in\mathbb{Z}$ such that $e_1+\cdots+e_n\equiv
b\pmod{N}$. Furthermore, from the fact that
$\frac{g_{(0,~\frac{2}{N})}(\tau_1)}{g_{(0,~\frac{1}{N})}(\tau_1)^4}=
\frac{g_{(0,~\frac{2}{N})}(\tau_2)}{g_{(0,~\frac{1}{N})}(\tau_2)^4}$
and
$\frac{g_{(0,~\frac{2}{N})}(\tau)}{g_{(0,~\frac{1}{N})}(\tau)^4}\in\mathcal{F}_N$
we derive
\begin{eqnarray*}
&&\frac{g_{(0,~\frac{2}{N})}(\tau_1)}{g_{(0,~\frac{1}{N})}(\tau_1)^4}=\frac{g_{(0,~\frac{2}{N})}(\tau_2)}{g_{(0,~\frac{1}{N})}(\tau_2)^4}
=\frac{g_{(0,~\frac{2}{N})}\big(\gamma(\tau_1)\big)}{g_{(0,~\frac{1}{N})}\big(\gamma(\tau_1)\big)^4}
=\bigg(\frac{g_{(0,~\frac{2}{N})}(\tau)}{g_{(0,~\frac{1}{N})}(\tau)^4}\bigg)\circ\gamma(\tau_1)\\
&=&\bigg(\frac{g_{(0,~\frac{2}{N})}(\tau)}{g_{(0,~\frac{1}{N})}(\tau)^4}\bigg)\circ(\gamma_1T^{e_1})\cdots(\gamma_nT^{e_n})(\tau_1)
=\zeta_{12}^{-3(e_1+\cdots+e_n)}\frac{g_{(0,~\frac{2}{N})}(\tau_1)}{g_{(0,~\frac{1}{N})}(\tau_1)^4}
\quad\textrm{by Lemma \ref{same}(iii).}
\end{eqnarray*}
Therefore $e_1+\cdots+e_n\equiv0\pmod{4}$, and so $b\equiv0\pmod{4}$
because $e_1+\cdots+e_n\equiv b\pmod{N}$ and $4~|~N$. We then see
that $\gamma$ belongs to the congruence subgroup $\Gamma_{1,~4}(N)$,
which implies that $\tau_1$ and $\tau_2$ represent the same point on
$X_{1,~4}(N)\setminus M$. This proves that the restriction morphism
is indeed an embedding as desired.
\end{proof}

\begin{remark}
\begin{itemize}
\item[(i)] Unfortunately, however, the morphism $\iota:X_{1,~4}(N)\rightarrow S$ is
not injective. For instance, one can check it with the cusps.
Indeed, let $s$ be a cusp of width $w$ and
$\alpha=\left(\begin{smallmatrix}a&b\\c&d\end{smallmatrix}\right)$
be an element of $\mathrm{SL}_2(\mathbb{Z})$ such that
$\alpha(\infty)=s$. Then we get that
\begin{eqnarray}
\mathrm{ord}_s\big(v(\tau)\big)&=&w\times\mathrm{ord}_{q_\tau}\big(v(\tau)\circ\alpha\big)
=w\times\tfrac{1}{2}\mathrm{ord}_{q_\tau}\big((u(\tau)-1)\circ\alpha\big)\quad\textrm{by
the relation
(\ref{additional})}\nonumber\\
&=&w\times\tfrac{1}{2}\mathrm{ord}_{q_\tau}\big((\tfrac{1}{2^63^3}j(\tau)-1)\circ\alpha\big)
\quad\textrm{by definitions (\ref{setting}) and (\ref{j})}\nonumber\\
&=&w\times\tfrac{1}{2}\mathrm{ord}_{q_\tau}\big(\tfrac{1}{2^63^3}j(\tau)-1\big)\quad\textrm{by
Proposition \ref{gmodularity}}\nonumber\\
&=&w\times\big(-\tfrac{1}{2}\big)\quad\textrm{by Remark
\ref{jremark}(ii).}\label{vorder}
\end{eqnarray}
And, we further obtain that
\begin{eqnarray}
&&\mathrm{ord}_s\bigg(y_{(0,~\frac{1}{N})}(\tau)\bigg)=w\times\mathrm{ord}_{q_\tau}\bigg(
\frac{g_{(0,~\frac{2}{N})}(\tau)}{g_{(0,~\frac{1}{N})}(\tau)^4}\circ\alpha
\bigg)\nonumber\\
&=&w\times\mathrm{ord}_{q_\tau}\bigg(
\frac{g_{(\frac{2c}{N},~\frac{2d}{N})}(\tau)}{g_{(\frac{c}{N},~\frac{d}{N})}(\tau)^4}\bigg)\quad\textrm{by
Lemma \ref{same} because }\mathrm{SL}_2(\mathbb{Z})=\big\langle
\left(\begin{smallmatrix}0&-1\\1&0\end{smallmatrix}\right),~
\left(\begin{smallmatrix}1&1\\0&1\end{smallmatrix}\right)\big\rangle\nonumber\\
&=&w\times\bigg(\tfrac{1}{2}\mathbf{B}_2\big(\langle\tfrac{2c}{N}\rangle\big)-
4\cdot\tfrac{1}{2}\mathbf{B}_2\big(\langle\tfrac{c}{N}\rangle\big)\bigg)
\quad\textrm{by the formula (\ref{order})}\nonumber\\
&=&\left\{\begin{array}{ll}
w\times\big(\langle\tfrac{c}{N}\rangle-\frac{1}{4}\big)&\textrm{if
$0\leq\langle\tfrac{c}{N}\rangle<\tfrac{1}{2}$ (, so
$\langle\tfrac{2c}{N}\rangle=2\langle\tfrac{c}{N}\rangle$)}\vspace{0.2cm}\\
w\times\big(-\langle\tfrac{c}{N}\rangle+\frac{3}{4}\big)&\textrm{if
$\tfrac{1}{2}\leq\langle\tfrac{c}{N}\rangle<1$ (, so
$\langle\tfrac{2c}{N}\rangle=2\langle\tfrac{c}{N}\rangle-1$).}
\end{array}\right.\nonumber
\end{eqnarray}
It then follows
\begin{equation}\label{yorder}
w\times\big(-\tfrac{1}{4}\big)\leq\mathrm{ord}_s\bigg(y_{(0,~\frac{1}{N})}(\tau)\bigg)
\leq w\times\tfrac{1}{4},
\end{equation}
whose first equality holds if and only if
$\langle\frac{c}{N}\rangle=0$. On the other hand, we have by the
morphism in Proposition \ref{surface}
\begin{eqnarray}\label{eqn}
&&\big(1+27v(\tau)^2\big)v(\tau)^3y_{(0,~\frac{1}{N})}(\tau)^2\nonumber\\
&=&4x_{(0,~\frac{1}{N})}(\tau)^3-\big(1+27v(\tau)^2\big)v(\tau)^2x_{(0,~\frac{1}{N})}(\tau)-\big(1+27v(\tau)^2\big)v(\tau)^4.
\end{eqnarray}
Let $t=\mathrm{ord}_s\big(x_{(0,~\frac{1}{N})}(\tau)\big)$ and
assume $\langle\frac{c}{N}\rangle\neq0$. Observe that there exist at
least two such inequivalent cusps with respect to
$\Gamma_{1,~4}(N)$, for example $s=1,~-1$. Then we derive by
(\ref{vorder}) and (\ref{yorder})
\begin{equation}\label{order3}
w\times(-3)<\mathrm{ord}_s\big(\textrm{LHS of (\ref{eqn})}\big).
\end{equation}
In this case, if $t\neq w\times(-1)$, then one can readily check
that
\begin{equation*}
\mathrm{ord}_s\big(\textrm{RHS of (\ref{eqn})}\big)=
\left\{\begin{array}{ll} w\times(-3)&\textrm{if $t>w\times(-1)$}\\
3t&\textrm{if $t<w\times(-1)$,}
\end{array}\right.
\end{equation*}
because $\mathrm{ord}_s(\cdot)$ is a valuation on the function field
$\mathbb{C}\big(X_{1,~4}(N)\big)$. Hence, this fact and
(\ref{order3}) lead to a contradiction to the identity (\ref{eqn}),
and so $t=w\times(-1)$. Therefore we claim that
\begin{eqnarray*}
\iota(s)&=&\bigg[\bigg(\frac{v(\tau)\circ\alpha}{x_{(0,~\frac{1}{N})}(\tau)\circ\alpha}\bigg)\bigg|_{q_\tau=0}
:1:\bigg(\frac{y_{(0,~\frac{1}{N})}(\tau)\circ\alpha}{x_{(0,~\frac{1}{N})}(\tau)\circ\alpha}\bigg)\bigg|_{q_\tau=0}:
\bigg(\frac{1}{x_{(0,~\frac{1}{N})}(\tau)\circ\alpha}\bigg)\bigg|_{q_\tau=0}\bigg]\\
&=&[0:1:0:0]
\end{eqnarray*}
(\cite{Miranda} Chapter V Lemma 4.2), from which we conclude that
the morphism is not injective.
\item[(ii)] As for the possible zeros of
$x_{(0,~\frac{1}{N})}(\tau)$ in $\mathfrak{H}$, it is probable that
the restriction morphism
$\iota:\Gamma_{1,~4}(N)\backslash\mathfrak{H}\rightarrow S$ is
injective. For example, if $N=4$, then the image of
$\{\alpha(\zeta_3),~\alpha(\zeta_4):\alpha\in\mathrm{SL}_2(\mathbb{Z})\}$
via the natural quotient map $\mathfrak{H}^*\rightarrow X_{1,~4}(N)$
consists of $20$ points, namely
\begin{eqnarray*}
&&\big\{\zeta_3,~\zeta_3+1,~\zeta_3+2,~\zeta_3+3,~
\tfrac{1}{-\zeta_3+1},~
\tfrac{1}{-\zeta_3+2},~\tfrac{2\zeta_3-1}{3\zeta_3+2},~
\tfrac{\zeta_3-2}{\zeta_3-1},\\
&&\phantom{\{}\zeta_4,~\zeta_4+1,~\zeta_4+2,~\zeta_4+3,~
\tfrac{1}{-\zeta_4+1},~ \tfrac{1}{-\zeta_4+2},~
\tfrac{1}{-\zeta_4+3},~ \tfrac{\zeta_4+1}{\zeta_4+2},~
\tfrac{\zeta_4-1}{-\zeta_4+2},~ \tfrac{\zeta_4-2}{\zeta_4-1},~
\tfrac{\zeta_4+2}{-\zeta_4-1},~ \tfrac{2\zeta_4+1}{3\zeta_4+2}
\big\}.
\end{eqnarray*}
And, by numerical computation one can show that the value of
$y_{(0,~\frac{1}{N})}(\tau)$ at each point is distinct, which
implies that the restriction morphism $\iota$ is injective.
\item[(iii)]
It seems that in the above proposition there should be an additional
hidden relation between $v(\tau)$ and $y_{(0,~\frac{1}{N})}(\tau)$
because $y_{(0,~\frac{1}{N})}(\tau)$ is a modular unit(see
\cite{K-S} or \cite{K-L}). That is, $y_{(0,~\frac{1}{N})}(\tau)$
satisfies a monic polynomial
\begin{equation*}
f(Y)=\prod_{\gamma\in\mathrm{Gal}(\mathcal{F}_N/\mathcal{F}_1)}\big(Y-y_{(0,~\frac{1}{N})}(\tau)^\gamma\big)
\end{equation*}
with coefficients in $\mathbb{Q}[v(\tau)]$. If we consider $f(Y)$ as
a polynomial $f(V,~Y)$ of $Y$ and $V$, then the intersection of $S$
and a hypersurface obtained from $f(V,~Y)$ may be a (singular) curve
in $\mathbb{P}^3(\mathbb{C})$.
\end{itemize}
\end{remark}

\section{Explicit description of Shimura's reciprocity law}

We shall briefly review an algorithm of determining all conjugates
of the singular value of a modular function, from which we can find
conjugates of singular values of certain Siegel functions due to
\cite{Gee}(or \cite{Stevenhagen}) and \cite{J-K-S}.

Throughout this section by $K$ we mean an imaginary quadratic field
with discriminant $d_K$ and define
\begin{eqnarray}\label{theta}
\theta=\left\{\begin{array}{ll}\frac{\sqrt{d_K}}{2}&\textrm{for}~d_K\equiv0\pmod{4}\\
\frac{-1+\sqrt{d_K}}{2}&\textrm{for}~
d_K\equiv1\pmod{4},\end{array}\right.
\end{eqnarray}
from which we get $\mathcal{O}_K=[\theta,~1]$. And, we denote by $H$
and $K_{(N)}$ the Hilbert class field and the ray class field modulo
$N\mathcal{O}_K$ over $K$ for an integer $N\geq1$, respectively.

\begin{proposition}\label{cm}
By the main theorem of complex multiplication we derive that
\begin{itemize}
\item[(i)] $H=K\big(j(\theta)\big)$.
\item[(ii)] $K_{(N)}=K\big(h(\theta)~:~h\in\mathcal{F}_N~\textrm{is defined and
finite at $\theta$}\big)$.
\item[(iii)] If $d_K\leq-7$ and $N\geq2$, then
$K_{(N)}=H\big(f_{(0,~\frac{1}{N})}(\theta)\big)$.
\end{itemize}
\end{proposition}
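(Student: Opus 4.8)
Parts (i) and (ii) are instances of the main theorem of complex multiplication, and I would prove them by citation. For (i), $j(\mathcal{O}_K)=j(\theta)$ is the singular modulus attached to $\mathcal{O}_K$ and generates the Hilbert class field over $K$ (\cite{Shimura} Chapter 5, \cite{Lang} Chapter 10). For (ii), the main theorem of complex multiplication together with Shimura's reciprocity law describes the action of $\mathrm{Gal}(K^{\mathrm{ab}}/K)$ on the values $h(\theta)$ for $h\in\mathcal{F}_N$ defined and finite at $\theta$, and shows that the field these values generate over $K$ is exactly $K_{(N)}$; see \cite{Shimura} Chapter 6 and \cite{Lang} Chapter 11, using Proposition \ref{meaningF_N} to identify $\mathcal{F}_N$ with the rational function field of $X(N)$.

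For (iii) I would first verify the inclusion $H\big(f_{(0,1/N)}(\theta)\big)\subseteq K_{(N)}$. By (\ref{defF_N}) the Fricke function $f_{(0,1/N)}$ lies in $\mathcal{F}_N$, and by (\ref{Fricke}) it is holomorphic on all of $\mathfrak{H}$: the denominator $\Delta(\tau)$ has no zero on $\mathfrak{H}$ (Proposition \ref{gFourier}), and $\wp(\tfrac1N;[\tau,1])$ has no pole there. Hence $f_{(0,1/N)}$ is defined and finite at $\theta$, so $f_{(0,1/N)}(\theta)\in K_{(N)}$ by (ii), while $j(\theta)\in H\subseteq K_{(N)}$ by (i); this gives the inclusion.

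For the reverse inclusion I would count the conjugates of $f_{(0,1/N)}(\theta)$ over $H$. Since $d_K\leq-7$ we have $\mathcal{O}_K^\times=\{\pm1\}$, so class field theory gives $\mathrm{Gal}(K_{(N)}/H)\cong(\mathcal{O}_K/N\mathcal{O}_K)^\times/\{\pm1\}$. Under the analytic parametrization $\varphi:\mathbb{C}/\mathcal{O}_K\stackrel{\sim}{\longrightarrow}E$ with $j(E)=j(\theta)$ we have $f_{(0,1/N)}(\theta)=h\big(\varphi(\tfrac1N)\big)$ where $h(x,y)=-2^73^5\tfrac{g_2(\theta)g_3(\theta)}{\Delta(\theta)}x$, and the main theorem of complex multiplication shows the $\mathrm{Gal}(K_{(N)}/H)$-orbit of this value is $\{h(\varphi(\tfrac aN)):a\in(\mathcal{O}_K/N\mathcal{O}_K)^\times\}$, with $h(\varphi(\tfrac aN))=-2^73^5\tfrac{g_2(\theta)g_3(\theta)}{\Delta(\theta)}\wp(\tfrac aN;[\theta,1])$. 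Because $d_K\leq-7$ forces $j(\theta)\notin\{0,1728\}$, Proposition \ref{gFourier}(ii) together with Lemma \ref{same}(i) gives $g_2(\theta)g_3(\theta)\neq0$, and then Lemma \ref{same}(ii) shows $\wp(\tfrac aN;[\theta,1])=\wp(\tfrac bN;[\theta,1])$ if and only if $a\equiv\pm b\pmod{N\mathcal{O}_K}$. Hence $a\mapsto h(\varphi(\tfrac aN))$ is injective on $(\mathcal{O}_K/N\mathcal{O}_K)^\times/\{\pm1\}$, the orbit has $[K_{(N)}:H]$ elements, and therefore $H\big(f_{(0,1/N)}(\theta)\big)=K_{(N)}$.

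The main obstacle is the bookkeeping of the main theorem of complex multiplication invoked in (ii) and in the displayed Galois action in (iii): one must pin down exactly how a class in $(\mathcal{O}_K/N\mathcal{O}_K)^\times$, equivalently the corresponding idele, acts on the torsion point $\varphi(\tfrac1N)$ while fixing $j(\theta)$. Once that is set up, the rest of (iii) is the elementary orbit-counting argument above, and the hypothesis $d_K\leq-7$ enters precisely twice: to force $\mathcal{O}_K^\times=\{\pm1\}$ and to force $g_2(\theta)g_3(\theta)\neq0$, so that $h$ really is the Weber function identifying a torsion point with its negative.
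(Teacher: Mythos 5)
Your proposal is correct, but note that the paper does not give an argument at all: its entire proof of this proposition is the citation ``See \cite{Lang} Chapter 10,'' where all three statements (in particular the fact that $K_{(N)}=K\big(j(\theta),\,h(\varphi(\tfrac1N))\big)$, which combined with (i) is exactly (iii) since $f_{(0,\frac1N)}(\theta)=h\big(\varphi(\tfrac1N)\big)$) are proved. Your treatment of (i) and (ii) by citation matches this, and your argument for (iii) is essentially the classical proof one finds in that reference: the inclusion $H\big(f_{(0,\frac1N)}(\theta)\big)\subseteq K_{(N)}$ from holomorphy of the Fricke function on $\mathfrak{H}$, and the reverse inclusion by counting the $\mathrm{Gal}(K_{(N)}/H)$-orbit of $h\big(\varphi(\tfrac1N)\big)$ using $\mathrm{Gal}(K_{(N)}/H)\cong(\mathcal{O}_K/N\mathcal{O}_K)^\times/\{\pm1\}$ (valid since $d_K\leq-7$ forces $\mathcal{O}_K^\times=\{\pm1\}$) together with Lemma \ref{same}(ii) and the nonvanishing $g_2(\theta)g_3(\theta)\neq0$. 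The one step you correctly flag as bookkeeping --- that the Artin symbols act on the torsion value precisely by $h\big(\varphi(\tfrac1N)\big)\mapsto h\big(\varphi(\tfrac aN)\big)$ with $a$ running over $(\mathcal{O}_K/N\mathcal{O}_K)^\times$ --- is exactly the content of the main theorem of complex multiplication as formulated in \cite{Lang} Chapter 10 (or \cite{Shimura} Chapter 5), so modulo that citation your argument is complete; the only difference from the paper is that you unpack the orbit count rather than quoting the finished theorem.
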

\begin{proof}
See \cite{Lang} Chapter 10.
\end{proof}

\begin{proposition}\label{rayGalois}
Let $\min(\theta,~\mathbb{Q})=X^2+B_\theta
X+C_\theta\in\mathbb{Z}[X]$. For every positive integer $N$ the
matrix group
\begin{equation*}W_{N,~\theta}=\bigg\{\begin{pmatrix}t-B_\theta s &
-C_\theta
s\\s&t\end{pmatrix}\in\mathrm{GL}_2(\mathbb{Z}/N\mathbb{Z})~:~t,~s\in\mathbb{Z}/N\mathbb{Z}\bigg\}
\end{equation*}
gives rise to the surjection
\begin{eqnarray*}
W_{N,~\theta}&\longrightarrow&\mathrm{Gal}(K_{(N)}/H)\\
\alpha&\mapsto&\bigg(h(\theta)\mapsto h^\alpha(\theta)\bigg)
\end{eqnarray*}
where $h\in\mathcal{F}_N$ is defined and finite at $\theta$. If
$d_K\leq-7$, then the kernel is $\{\pm1_2\}$.
\end{proposition}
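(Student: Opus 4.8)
The plan is to combine the general Galois description of $\mathcal{F}_N$ over $\mathcal{F}_1$ (the proposition identifying $\mathrm{Gal}(\mathcal{F}_N/\mathcal{F}_1)\cong\mathrm{GL}_2(\mathbb{Z}/N\mathbb{Z})/\{\pm1_2\}$) with Shimura's reciprocity law, which transports this action to the arithmetic of $K_{(N)}/H$ via specialization at $\theta$. First I would recall Shimura's reciprocity map: for an idele $s$ of $K$ whose associated Artin symbol is $\sigma\in\mathrm{Gal}(K_{(N)}/H)$, and for $h\in\mathcal{F}_N$ finite at $\theta$, one has $h(\theta)^\sigma = h^{g_\theta(s)}(\theta)$, where $g_\theta(s)\in\mathrm{GL}_2(\mathbb{Z}/N\mathbb{Z})/\{\pm1_2\}$ is obtained by letting $s$ act on the $\mathcal{O}_K$-module $\tfrac{1}{N}\mathcal{O}_K/\mathcal{O}_K$ written in the basis $\{\theta,1\}$. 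The key computation is then purely local: multiplication by an element $t - s\theta$ (with $t,s\in\mathbb{Z}/N\mathbb{Z}$) on the basis $\{\theta,1\}$, using $\theta^2 = -B_\theta\theta - C_\theta$ from $\min(\theta,\mathbb{Q})$, sends $\theta \mapsto (t-B_\theta s)\theta - C_\theta s\cdot 1$ and $1\mapsto s\theta + t\cdot 1$; reading off the matrix of this map in the basis $\{\theta,1\}$ gives exactly $\left(\begin{smallmatrix} t - B_\theta s & -C_\theta s\\ s & t\end{smallmatrix}\right)$, which is the general element of $W_{N,\theta}$. This identifies $W_{N,\theta}/\{\pm1_2\}$ with the image of $(\mathcal{O}_K/N\mathcal{O}_K)^*$ under $g_\theta$.

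Next I would establish surjectivity onto $\mathrm{Gal}(K_{(N)}/H)$. By Proposition~\ref{cm}(ii), $K_{(N)} = H(h(\theta) : h\in\mathcal{F}_N\text{ finite at }\theta)$, so every element of $\mathrm{Gal}(K_{(N)}/H)$ is realized by some $\sigma$ coming from a Galois automorphism $\tau$ of $\mathcal{F}_N/\mathcal{F}_1$ via $h(\theta)^\sigma = h^\tau(\theta)$ — but only for those $\tau$ that are compatible with specialization at $\theta$, i.e. those lying in the image of $g_\theta$. The point is that the class field theory side forces the relevant ideles to be those supported at $K$, and $g_\theta$ applied to $(\mathcal{O}_K/N\mathcal{O}_K)^*\cong(\mathcal{O}_K\otimes\mathbb{Z}/N\mathbb{Z})^*$ has image exactly $W_{N,\theta}\cap\mathrm{GL}_2(\mathbb{Z}/N\mathbb{Z})$ modulo $\pm1_2$; the fact that $W_{N,\theta}$ consists of invertible matrices for the admissible $(t,s)$ is part of the statement (the determinant is $t^2 + B_\theta st + C_\theta s^2 = \mathrm{Nm}(t - s\theta)$, nonzero mod $N$ precisely when $t-s\theta$ is a unit mod $N$). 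Surjectivity of the arithmetic map $(\mathcal{O}_K/N\mathcal{O}_K)^* \to \mathrm{Gal}(K_{(N)}/H)$ is the standard statement that $K_{(N)}/H$ corresponds to the ray class group modulo $N\mathcal{O}_K$ relative to $H$, which follows from the main theorem of complex multiplication (Proposition~\ref{cm}).

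For the kernel computation, assume $d_K\le -7$. An element of the kernel corresponds to $t - s\theta\in(\mathcal{O}_K/N\mathcal{O}_K)^*$ whose image in $\mathrm{Gal}(K_{(N)}/H)$ is trivial, equivalently (by the explicit reciprocity law) to a matrix $\alpha\in W_{N,\theta}$ with $h^\alpha(\theta) = h(\theta)$ for all $h\in\mathcal{F}_N$ finite at $\theta$. Using Proposition~\ref{cm}(iii), $K_{(N)} = H(f_{(0,\frac{1}{N})}(\theta))$, it suffices to test $\alpha$ against the Fricke functions $f_{(r_1,r_2)}$; $\alpha$ acts on these by permuting $(r_1,r_2)\bmod 1$ via the matrix action, and triviality forces $(t,s)\equiv(\pm1,0)\pmod{N}$, i.e. $\alpha\equiv\pm1_2$. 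The one subtlety — and what I expect to be the main obstacle — is handling torsion: for $d_K = -3$ or $-4$ the elliptic curve has extra automorphisms and the stabilizer of $\theta$ in $\mathrm{SL}_2(\mathbb{Z})$ is larger than $\{\pm1\}$, so the map could fail to be injective modulo $\pm1_2$; this is exactly why the hypothesis $d_K\le -7$ appears, and the argument must invoke that $\mathrm{Aut}(\mathcal{O}_K) = \{\pm1\}$ in that range (equivalently, that $j(\theta)\ne 0, 1728$) to rule out the exceptional stabilizer elements. Granting that, the kernel is exactly $\{\pm1_2\}$, and combining with the surjectivity above completes the proof. I would cite \cite{Shimura} (Chapter~5) and \cite{Lang} (Chapter~10) for the reciprocity law and the class field theory input, as the excerpt already does for the neighbouring propositions.
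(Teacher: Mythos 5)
The paper offers no proof of this proposition beyond the citation ``See \cite{Gee} or \cite{Stevenhagen}'', and your sketch is precisely the Shimura-reciprocity argument of those references (specialize the $\mathrm{GL}_2(\mathbb{Z}/N\mathbb{Z})/\{\pm1_2\}$-action on $\mathcal{F}_N$ at $\theta$ via $g_\theta$ on $(\mathcal{O}_K/N\mathcal{O}_K)^*$, get surjectivity from the exact sequence $\mathcal{O}_K^*\to(\mathcal{O}_K/N\mathcal{O}_K)^*\to\mathrm{Cl}(N\mathcal{O}_K)\to\mathrm{Cl}_K\to1$, and pin down the kernel with the Fricke/Weber function, using $j(\theta)\neq0,1728$ for $d_K\leq-7$), so it is correct and in the same spirit as the paper's source. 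One small slip to fix: the matrix $\left(\begin{smallmatrix}t-B_\theta s&-C_\theta s\\ s&t\end{smallmatrix}\right)$ is that of multiplication by $t+s\theta$ (the images $\theta\mapsto(t-B_\theta s)\theta-C_\theta s$, $1\mapsto s\theta+t$ you wrote are for $t+s\theta$, and its determinant is $t^2-B_\theta st+C_\theta s^2=\mathrm{N}_{K/\mathbb{Q}}(t+s\theta)$), not by $t-s\theta$ as you named it.
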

\begin{proof}
See \cite{Gee} or \cite{Stevenhagen}.
\end{proof}

Under the properly equivalent relation, primitive positive definite
quadratic forms $aX^2+bXY+cY^2\in\mathbb{Z}[X,~Y]$ of discriminant
$d_K$ determine a group $\mathrm{C}(d_K)$, called the \textit{form
class group of discriminant $d_K$}. We identify $\mathrm{C}(d_K)$
with the set of all \textit{reduced quadratic forms}, which are
characterized by the conditions
\begin{equation}\label{reduced}
-a<b\leq a<c\quad\textrm{or}\quad 0\leq b\leq a=c
\end{equation}
together with the discriminant relation
\begin{equation}\label{disc}
b^2-4ac=d_K.
\end{equation}
From the above two conditions for reduced quadratic forms we deduce
\begin{equation}\label{bound a}
1\leq a\leq\sqrt{\tfrac{-d_K}{3}}.
\end{equation}
As is well-known(\cite{Cox}) $\mathrm{C}(d_K)$ is isomorphic to
$\textrm{Gal}(H/K)$. Now, for a reduced quadratic form
$Q=aX^2+bXY+cY^2$ of discriminant $d_K$ we define a CM-point
\begin{equation}\label{theta_Q}
\theta_Q=\frac{-b+\sqrt{d_K}}{2a}.
\end{equation}
Furthermore, we define
$\beta_Q=(\beta_p)_p\in\prod_{p~:~\textrm{prime}}\textrm{GL}_2(\mathbb{Z}_p)$
as
\begin{eqnarray}\label{u1}
\beta_p=\left\{\begin{array}{ll}
\begin{pmatrix}a&\frac{b}{2}\\0&1\end{pmatrix}&\textrm{if $p\nmid a$}\\
\begin{pmatrix}-\frac{b}{2}&-c\\1&0\end{pmatrix}&\textrm{if $p\mid a$ and $p\nmid c$}\\
\begin{pmatrix}-\frac{b}{2}-a&-\frac{b}{2}-c\\1&-1\end{pmatrix}&\textrm{if $p\mid a$ and $p\mid
c$}\end{array}\right.\quad\textrm{for}~d_K\equiv0\pmod{4}
\end{eqnarray}
and
\begin{eqnarray}\label{u2}
\beta_p=\left\{\begin{array}{ll}
\begin{pmatrix}a&\frac{b-1}{2}\\0&1\end{pmatrix}&\textrm{if}~p\nmid a\\
\begin{pmatrix}\frac{-b-1}{2}&-c\\1&0\end{pmatrix}&\textrm{if $p\mid a$ and $p\nmid c$}\\
\begin{pmatrix}\frac{-b-1}{2}-a&\frac{1-b}{2}-c\\1&-1\end{pmatrix}&\textrm{if $p\mid a$ and $p\mid
c$}\end{array}\right.\quad\textrm{for}~d_K\equiv1\pmod{4}.
\end{eqnarray}

\begin{proposition}\label{conjugate}
Assume $d_K\leq-7$ and $N\geq1$. Then we have a bijective map
\begin{eqnarray*}
\begin{array}{cccc}
W_{N,~\theta}/\{\pm1_2\}\times\mathrm{C}(d_K)&\longrightarrow&
\mathrm{Gal}(K_{(N)}/K)&\\
(\alpha,~ Q)&\longmapsto&\bigg(h(\theta)\mapsto
h^{\alpha\beta_Q}(\theta_Q)\bigg).\end{array}
\end{eqnarray*}
Here, $h\in\mathcal{F}_N$ is defined and finite at $\theta$. The
action of $\alpha$ on $\mathcal{F}_N$ is the action as an element of
$\mathrm{GL}_2(\mathbb{Z}/N\mathbb{Z})/\{\pm1_2\}\cong\mathrm{Gal}(\mathcal{F}_N/\mathcal{F}_1)$.
And, as for $\beta_Q$ we note that there exists
$\beta\in\mathrm{GL}_2^+(\mathbb{Q})\cap \mathrm{M}_2(\mathbb{Z})$
such that $\beta\equiv \beta_p\pmod{N\mathbb{Z}_p}$ for all primes
$p$ dividing $N$ by the Chinese remainder theorem. Thus the action
of $\beta_Q$ on $\mathcal{F}_N$ is understood as that of $\beta$
which is also an element of
$\mathrm{GL}_2(\mathbb{Z}/N\mathbb{Z})/\{\pm1_2\}$.
\end{proposition}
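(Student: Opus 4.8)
The plan is to assemble the bijection from two ingredients already in hand: Proposition \ref{rayGalois}, which identifies $W_{N,\theta}/\{\pm 1_2\}$ with $\mathrm{Gal}(K_{(N)}/H)$ through the action $h(\theta)\mapsto h^\alpha(\theta)$, and the classical isomorphism $\mathrm{C}(d_K)\cong\mathrm{Gal}(H/K)$ of the form class group with the Galois group of the Hilbert class field (\cite{Cox}), under which a reduced form $Q=aX^2+bXY+cY^2$ corresponds to the Artin automorphism $\sigma_Q$ attached to the proper $\mathcal{O}_K$-ideal $\mathfrak{a}_Q=[a,\tfrac{-b+\sqrt{d_K}}{2}]=a[\theta_Q,1]$, and $\sigma_Q$ sends $j(\theta)$ to $j(\theta_Q)$ (\cite{Cox}, \cite{Lang} Chapter 10). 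Because $H/K$ and $K_{(N)}/H$ sit in an exact sequence $1\to\mathrm{Gal}(K_{(N)}/H)\to\mathrm{Gal}(K_{(N)}/K)\to\mathrm{Gal}(H/K)\to1$, every $\sigma\in\mathrm{Gal}(K_{(N)}/K)$ factors uniquely as $\sigma=\widetilde{\sigma_Q}\circ\rho_\alpha$ with $\rho_\alpha\in\mathrm{Gal}(K_{(N)}/H)$ indexed by $\alpha$ and $\widetilde{\sigma_Q}$ a chosen lift of $\sigma_Q$; this is exactly the data recorded by a pair $(\alpha,Q)\in W_{N,\theta}/\{\pm 1_2\}\times\mathrm{C}(d_K)$.

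The crux is to make the lift $\widetilde{\sigma_Q}$ explicit on all of $\mathcal{F}_N$ by Shimura's reciprocity law (\cite{Gee}, \cite{Stevenhagen}, \cite{Shimura}). For $h\in\mathcal{F}_N$ defined and finite at $\theta$, the main theorem of complex multiplication expresses the image of the singular value $h(\theta)$ under the Artin symbol of an idele; representing the ideal class of $\mathfrak{a}_Q$ by a finite idele and transporting the computation to the CM-point $\theta_Q$ yields $h(\theta)^{\widetilde{\sigma_Q}}=h^{\beta_Q}(\theta_Q)$, where $\beta_Q=(\beta_p)_p$ is precisely the local datum in (\ref{u1}) and (\ref{u2}). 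The three cases there --- $p\nmid a$; $p\mid a,~p\nmid c$; $p\mid a,~p\mid c$ --- simply record the three ways of picking a $\mathbb{Z}_p$-basis of $\mathfrak{a}_Q\otimes\mathbb{Z}_p$ compatible with $[\theta_Q,1]\otimes\mathbb{Z}_p$, and one checks case by case that $\det\beta_p>0$ and that $\beta_p$ carries $[\theta_Q,1]$ to $[\theta,1]$ after the appropriate $\mathbb{Q}_p^\times$-scaling. Since $h$ has level $N$, only primes $p\mid N$ contribute, so by the Chinese remainder theorem $\beta_Q$ may be replaced by a single $\beta\in\mathrm{GL}_2^+(\mathbb{Q})\cap\mathrm{M}_2(\mathbb{Z})$ with $\beta\equiv\beta_p\pmod{N\mathbb{Z}_p}$ for $p\mid N$, acting on $\mathcal{F}_N$ via its class in $\mathrm{GL}_2(\mathbb{Z}/N\mathbb{Z})/\{\pm 1_2\}$; this class, and hence the action on $h(\theta)$, does not depend on the choice of $\beta$.

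Assembling the pieces, the map $(\alpha,Q)\mapsto\big(h(\theta)\mapsto(h^\alpha(\theta))^{\widetilde{\sigma_Q}}\big)$ becomes, after the preceding identification, $h(\theta)\mapsto h^{\alpha\beta_Q}(\theta_Q)$ (here one uses that $h^\alpha$ is again a level-$N$ modular function defined and finite at $\theta$). It is well defined on the quotient $W_{N,\theta}/\{\pm 1_2\}$ because $\alpha$ and $-\alpha$ induce the same automorphism of $\mathcal{F}_N$, the Galois action factoring through $\mathrm{GL}_2(\mathbb{Z}/N\mathbb{Z})/\{\pm 1_2\}$. Injectivity follows by first projecting to $\mathrm{Gal}(H/K)$, which recovers $Q$ from the form class group isomorphism, and then applying the kernel statement of Proposition \ref{rayGalois} to recover $\alpha$; combined with the equality $\#\big(W_{N,\theta}/\{\pm 1_2\}\big)\cdot\#\mathrm{C}(d_K)=[K_{(N)}:H]\cdot[H:K]=[K_{(N)}:K]$ this forces bijectivity. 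The main obstacle is the middle paragraph --- verifying that the specific matrices (\ref{u1})--(\ref{u2}) really are idele representatives for the Artin symbol of $\mathfrak{a}_Q$, i.e. correctly converting Shimura's adelic reciprocity law into the ``apply $\beta_Q$, then evaluate at $\theta_Q$'' recipe, in particular handling the $p\mid a$ cases where the naive basis $\{a\theta_Q,1\}$ of $\mathfrak{a}_Q$ is unavailable $p$-adically; everything else is bookkeeping with the exact sequence and Proposition \ref{rayGalois}.
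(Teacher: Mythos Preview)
Your sketch is correct and follows the Gee--Stevenhagen explicit Shimura reciprocity machinery that underlies this result. Note, however, that the paper does not actually supply a proof of this proposition: its entire ``proof'' is the single line ``See \cite{J-K-S} Theorem 3.4.'' So there is nothing to compare your argument against within this paper itself; what you have written is essentially the content one expects to find in the cited reference (and in \cite{Gee}, \cite{Stevenhagen}), namely the decomposition via the exact sequence $1\to\mathrm{Gal}(K_{(N)}/H)\to\mathrm{Gal}(K_{(N)}/K)\to\mathrm{Gal}(H/K)\to1$, Proposition~\ref{rayGalois} for the first factor, the form class group for the second, and the explicit idele computation producing the matrices $\beta_Q$.
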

\begin{proof}
See \cite{J-K-S} Theorem 3.4.
\end{proof}

We need some transformation formulas of Siegel functions to apply
the above proposition.

\begin{proposition}\label{F_N}
Let $N\geq 2$. For $(r_1,~r_2)\in\frac{1}{N}\mathbb{Z}^2\setminus
\mathbb{Z}^2$ the function $g_{(r_1,~r_2)}(\tau)^{12N}$ satisfies
\begin{equation*}
g_{(r_1,~r_2)}(\tau)^{12N}=g_{(-r_1,~-r_2)}(\tau)^{12N}=g_{(\langle
r_1\rangle,~\langle r_2\rangle)}(\tau)^{12N}.
\end{equation*}
And, it belongs to $\mathcal{F}_N$ and $\alpha$ in
$\mathrm{GL}_2(\mathbb{Z}/N\mathbb{Z})/\{\pm1_2\}\cong\mathrm{Gal}(\mathcal{F}_N/
\mathcal{F}_1)$ acts on the function by
\begin{equation*}
\bigg(g_{(r_1,~r_2)}(\tau)^{12N}\bigg)^\alpha=
g_{(r_1,~r_2)\alpha}(\tau)^{12N}.
\end{equation*}
\end{proposition}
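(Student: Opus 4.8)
The plan is to reduce everything to known transformation formulas for Siegel functions and the modularity criterion of Proposition \ref{Siegelmodularity}. First I would establish the two identities
$g_{(r_1,~r_2)}(\tau)^{12N}=g_{(-r_1,~-r_2)}(\tau)^{12N}=g_{(\langle r_1\rangle,~\langle r_2\rangle)}(\tau)^{12N}$.
The first equality is immediate: since $\sigma$ is an odd function and the Klein form $\mathfrak{k}_{(r_1,~r_2)}$ picks up only the exponential prefactor under $(r_1,~r_2)\mapsto(-r_1,~-r_2)$, one has $g_{(-r_1,~-r_2)}(\tau)=-g_{(r_1,~r_2)}(\tau)$, which disappears after raising to the $12N$-th power. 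For the second equality, the standard quasi-periodicity of the Klein form (equivalently of $\sigma$ under translation by the lattice, together with the $\eta$-factor) gives $g_{(r_1+1,~r_2)}(\tau)=\varepsilon\, g_{(r_1,~r_2)}(\tau)$ and $g_{(r_1,~r_2+1)}(\tau)=\varepsilon'\, g_{(r_1,~r_2)}(\tau)$ where $\varepsilon,\varepsilon'$ are roots of unity whose order divides $12N$ (in fact they are a sign times a $2N$-th root of unity coming from the $\eta_1,\eta_2$ terms); hence after the $12N$-th power one may translate $r_1,r_2$ freely by integers, which is exactly the passage to $(\langle r_1\rangle,~\langle r_2\rangle)$. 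I would cite \cite{K-L} Chapter 2 for the precise prefactors rather than recompute them.

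Next, to see $g_{(r_1,~r_2)}(\tau)^{12N}\in\mathcal{F}_N$ I would apply Proposition \ref{Siegelmodularity} with the single-term product $m(r)=12N$ for $r=(r_1,~r_2)$ and $m=0$ otherwise. The three congruence conditions become $12N(Nr_1)^2\equiv 12N(Nr_2)^2\equiv 0\pmod{\gcd(2,N)\cdot N}$, $12N(Nr_1)(Nr_2)\equiv 0\pmod N$, and $12N\cdot\gcd(12,N)\equiv 0\pmod{12}$, all of which hold trivially because $Nr_1,Nr_2\in\mathbb{Z}$ and each left-hand side is already divisible by $N$ (resp.\ by $12$). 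This places $g_{(r_1,~r_2)}(\tau)^{12N}$ in $\mathcal{F}_N$; alternatively this is \cite{K-L} Chapter 3 Theorem 5.2 applied directly. Combined with the fact that $g_{(r_1,~r_2)}$ has no zeros or poles on $\mathfrak{H}$ (noted after (\ref{FourierSiegel})), the function is a modular unit of level $N$.

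Finally, for the Galois action: by Proposition 2.7 the group $\mathrm{GL}_2(\mathbb{Z}/N\mathbb{Z})/\{\pm1_2\}\cong\mathrm{Gal}(\mathcal{F}_N/\mathcal{F}_1)$ acts through the two generators $\left(\begin{smallmatrix}1&0\\0&d\end{smallmatrix}\right)$ (acting on Fourier coefficients via $\sigma_d$) and $\gamma\in\mathrm{SL}_2(\mathbb{Z}/N\mathbb{Z})/\{\pm1_2\}$ (acting by composition with a lift $\gamma'\in\mathrm{SL}_2(\mathbb{Z})$). For the $\mathrm{SL}_2$ part, the transformation law of the Klein form under $\mathrm{SL}_2(\mathbb{Z})$ (the generalization of Lemma \ref{same}(iii) to arbitrary $\gamma'=\left(\begin{smallmatrix}a&b\\c&d\end{smallmatrix}\right)$, see \cite{K-L} Chapter 2 or \cite{K-S} Proposition 2.4) reads $g_{(r_1,~r_2)}(\tau)\circ\gamma' = \zeta\cdot g_{(r_1,~r_2)\gamma'}(\tau)$ for a $12$-th root of unity $\zeta$; raising to the $12N$-th power kills $\zeta$ and, using the first part to replace $(r_1,~r_2)\gamma'$ by its representative modulo $\mathbb{Z}^2$, yields $\big(g_{(r_1,~r_2)}(\tau)^{12N}\big)^{\gamma}=g_{(r_1,~r_2)\gamma}(\tau)^{12N}$, which depends only on $\gamma\bmod N$. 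For the diagonal part, from the Fourier expansion (\ref{FourierSiegel}) one checks that all coefficients of $g_{(r_1,~r_2)}(\tau)^{12N}$ lie in $\mathbb{Q}(\zeta_N)$ and that applying $\sigma_d$ replaces $\zeta_N=e^{2\pi i/N}$ by $\zeta_N^d$, which has exactly the effect of sending the parameter $(r_1,r_2)=(r_1,r_2)$ to $(r_1,~r_2)\left(\begin{smallmatrix}1&0\\0&d\end{smallmatrix}\right)=(r_1,~dr_2)$ in the product formula. Putting the two pieces together with the decomposition of $\mathrm{GL}_2(\mathbb{Z}/N\mathbb{Z})/\{\pm1_2\}$ gives $\big(g_{(r_1,~r_2)}(\tau)^{12N}\big)^\alpha=g_{(r_1,~r_2)\alpha}(\tau)^{12N}$ for all $\alpha$. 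I expect the main bookkeeping obstacle to be tracking the various roots of unity in the Klein-form transformation formulas and verifying that each one is annihilated by the exponent $12N$ (rather than needing a larger exponent); once that is pinned down via the references, everything else is formal.
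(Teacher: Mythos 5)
Your proposal is correct, but note that the paper itself offers no argument here: its ``proof'' of Proposition \ref{F_N} is simply the citation of \cite{K-S} Proposition 2.4 and Theorem 2.5 (ultimately \cite{K-L} Chapter 2 and Chapter 3). What you have written is essentially a reconstruction of that standard proof, and the details check out. The oddness of $\sigma$ together with the evenness of the exponential prefactor gives $g_{(-r_1,-r_2)}=-g_{(r_1,r_2)}$, and the Kubert--Lang translation formula $\mathfrak{k}_{(r_1+m,\,r_2+n)}=(-1)^{mn+m+n}e^{-\pi i(mr_2-nr_1)}\mathfrak{k}_{(r_1,r_2)}$ produces exactly a sign times a $2N$-th root of unity, both annihilated by the exponent $12N$; your verification of the three congruences in Proposition \ref{Siegelmodularity} with $m(r)=12N$ is trivially right; the $\mathrm{SL}_2$ part works because $\mathfrak{k}_r\circ\gamma'=(c\tau+d)^{-1}\mathfrak{k}_{r\gamma'}$ has no multiplier while $\eta^2$ contributes only a $12$-th root of unity, and the first identity then gives independence of the choice of lift $\gamma'$ (including the sign ambiguity in $\{\pm1_2\}$); and for the diagonal part, after raising to the $12N$-th power the prefactor $e^{\pi i r_2(r_1-1)}$ and the factors involving $q_z=q_\tau^{r_1}e^{2\pi i r_2}$ contribute coefficients that are integer polynomials in $\zeta_N^{Nr_2}$, so $\sigma_d$ indeed has the same effect as $(r_1,r_2)\mapsto(r_1,dr_2)$. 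So your route and the paper's differ only in that you carry out the computation the paper delegates to its references; the one piece of bookkeeping worth writing out explicitly if you were to submit this as a full proof is the compatibility of the two partial actions with the decomposition of $\mathrm{GL}_2(\mathbb{Z}/N\mathbb{Z})/\{\pm1_2\}$, i.e.\ that the composite action corresponds to right multiplication of the index vector by $\alpha$, which is precisely the content of \cite{K-S} Theorem 2.5.
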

\begin{proof}
See \cite{K-S} Proposition 2.4 and Theorem 2.5.
\end{proof}

\section{Generation of ray class fields by torsion points of elliptic curves}

Let $K$ be an imaginary quadratic field with discriminant $d_K$ and
$\theta$ as in (\ref{theta}). Here, we shall construct the ray class
field $K_{(N)}$ by adjoining to $K$ some $N$-torsion point of
certain elliptic curve, if $d_K\leq-39$, $N\geq8$ and $4~|~N$.
\par
For convenience we set
\begin{equation*}
D=\sqrt{\tfrac{-d_K}{3}}\quad\textrm{and}\quad A=|e^{2\pi
i\theta}|=e^{-\pi\sqrt{-d_K}}.
\end{equation*}

\begin{lemma}\label{ineq}
We have the following inequalities:
\begin{itemize}
\item[(i)] If $d_K\leq-7$, then
\begin{equation}\label{A}
\frac{1}{1-A^{\frac{X}{a}}}<1+A^\frac{X}{1.03a}
\end{equation}
for $1\leq a\leq D$ and all $X\geq\frac{1}{2}$.
\item[(ii)] $1+X<e^X$ for all $X>0$.
\end{itemize}
\end{lemma}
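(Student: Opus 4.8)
The plan is to dispose of part (ii) immediately and then reduce part (i) to a single explicit numerical inequality that turns out to be independent of $d_K$.

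For (ii), set $f(X)=e^X-1-X$. Then $f(0)=0$ and $f'(X)=e^X-1>0$ for $X>0$, so $f$ is strictly increasing on $[0,\infty)$ and hence $f(X)>0$ for all $X>0$; equivalently, one may simply invoke the power series $e^X=1+X+\frac{X^2}{2}+\cdots$, all of whose terms beyond the first two are positive.

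For (i), I would first clear the denominator. Since $d_K\le-7$ forces $0<A=e^{-\pi\sqrt{-d_K}}<1$, and $X,a>0$ give $0<A^{X/a}<1$, we have $1-A^{X/a}>0$, so the asserted inequality is equivalent, after cross-multiplying and expanding, to
\[
A^{X/a}+A^{X/a+X/(1.03a)}<A^{X/(1.03a)}.
\]
Dividing through by $A^{X/(1.03a)}>0$ and writing $s:=A^{X/(1.03a)}\in(0,1)$, so that $A^{X/a}=s^{1.03}$ and $A^{X/a-X/(1.03a)}=s^{0.03}$, this reduces to
\[
s^{0.03}+s^{1.03}<1 .
\]
Now I would bound $s$ uniformly over the allowed ranges. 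Because $0<A<1$, the quantity $A^{X/(1.03a)}$ is largest when the exponent $X/(1.03a)$ is smallest, i.e.\ at $X=\tfrac12$ and $a=D$, so $s\le A^{1/(2.06D)}$. The key simplification is that $\sqrt{-d_K}=\sqrt{3}\,D$, hence
\[
A^{1/(2.06D)}=e^{-\pi\sqrt{-d_K}/(2.06D)}=e^{-\pi\sqrt{3}/2.06}=:c,
\]
a constant with no dependence on $d_K$. Since $u\mapsto u^{0.03}+u^{1.03}$ is increasing on $[0,1]$ and $0<s\le c<1$, it suffices to check $c^{0.03}+c^{1.03}<1$; numerically $\pi\sqrt{3}/2.06\approx 2.6415$, so $c^{0.03}\approx 0.924$ and $c^{1.03}\approx 0.066$, with sum $\approx 0.990<1$, which finishes the proof.

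I do not expect a genuine obstacle here. The only points requiring care are the bookkeeping in the algebraic reduction (matching the three exponents of $A$ correctly) and an honest verification of the final numerical estimate; conceptually, the entire dependence on $d_K$ cancels once one uses $X\ge\tfrac12$ and $a\le D$, leaving a single number slightly below $1$. The hypothesis $d_K\le-7$ enters only through $A<1$ — in fact any imaginary quadratic discriminant would do for that — and contributes nothing quantitative, so the constant $1.03$ in the exponent is comfortably not optimal.
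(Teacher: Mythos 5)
Your argument is correct and is essentially the paper's own proof: after clearing the denominator, the inequality reduces to $A^{\frac{X}{a}\cdot\frac{3}{103}}+A^{\frac{X}{a}}<1$, which is then bounded using $X\ge\tfrac12$, $a\le D$ and $A^{1/D}=e^{-\pi\sqrt{3}}$ by the explicit constant $e^{-\frac{\pi\sqrt 3}{2}\cdot\frac{3}{103}}+e^{-\frac{\pi\sqrt 3}{2}}<1$, and your $c^{0.03}+c^{1.03}$ is literally this same quantity, so the substitution $s=A^{X/(1.03a)}$ is only a cosmetic repackaging. Part (ii) is likewise handled in the standard way the paper leaves as immediate.
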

\begin{proof}
\begin{itemize}
\item[(i)] The inequality (\ref{A}) is equivalent to
\begin{equation*}
A^{\frac{X}{a}\frac{3}{103}}+A^{\frac{X}{a}}<1.
\end{equation*}
Since $A=e^{-\pi\sqrt{-d_K}}\leq e^{-\pi\sqrt{7}}<1$,  $1\leq a\leq
D$ and $X\geq\frac{1}{2}$, we obtain that
\begin{equation*}
A^{\frac{X}{a}\frac{3}{103}}+A^{\frac{X}{a}}\leq
A^{\frac{1}{2D}\frac{3}{103}}+A^{\frac{1}{2D}}=
e^{-\frac{\pi\sqrt{3}}{2}\frac{3}{103}}+e^{\frac{-\pi\sqrt{3}}{2}}<1
\end{equation*}
by the fact $A^\frac{1}{D}=e^{-\pi\sqrt{3}}$. This proves (i).
\item[(ii)] Immediate.
\end{itemize}
\end{proof}

\begin{lemma}\label{newlemma}
Assume that $d_K\leq-39$ and $N\geq8$. Let $Q=aX^2+bXY+cY^2$ be a
reduced quadratic form of discriminant $d_K$. If $a\geq2$, then the
inequality
\begin{equation*}
\bigg|\frac{g_{(\frac{2s}{N},~\frac{2t}{N})}(\theta_Q)}{g_{(\frac{s}{N},~\frac{t}{N})}(\theta_Q)^4}\bigg|
<\bigg|\frac{g_{(0,~\frac{2}{N})}(\theta)}{g_{(0,~\frac{1}{N})}(\theta)^4}\bigg|.
\end{equation*}
holds for $(s,~t)\in\mathbb{Z}^2$ with $(2s,~2t)\not\in
N\mathbb{Z}^2$.
\end{lemma}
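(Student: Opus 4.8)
The plan is to compare the absolute values on both sides via the Fourier expansion formula \eqref{FourierSiegel} for Siegel functions, reducing everything to explicit elementary estimates in terms of the quantities $A=|e^{2\pi i\theta}|$ and $D=\sqrt{-d_K/3}$ introduced above. First I would write out $|g_{(r_1,r_2)}(\tau)|$ using \eqref{FourierSiegel}: up to the sign and root-of-unity factors (which have absolute value $1$), $|g_{(r_1,r_2)}(\tau)| = |q_\tau|^{\frac12\mathbf{B}_2(\langle r_1\rangle)} \,|1-q_z|\prod_{n\ge1}|1-q_\tau^n q_z||1-q_\tau^n q_z^{-1}|$, where $z=r_1\tau+r_2$. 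For the right-hand side, $\theta$ has $r_1=0$, so $\langle 0\rangle=0$ contributes $\mathbf{B}_2(0)=\tfrac16$; the leading $q_\tau$-power is thus easy, and since $q_z = e^{2\pi i/N}$ (resp. $e^{4\pi i/N}$) lies on the unit circle, $|1-q_z|$ and the infinite product are bounded above and below by simple expressions in $A=|q_\theta|$. For the left-hand side, $\theta_Q = \frac{-b+\sqrt{d_K}}{2a}$ has $|q_{\theta_Q}| = e^{-\pi\sqrt{-d_K}/a} = A^{3/(2a)}\cdot(\text{something})$—more precisely $|q_{\theta_Q}|=A^{D/a}$ after rewriting, wait, one checks $|e^{2\pi i\theta_Q}| = e^{-\pi\sqrt{-d_K}/a}$ and $A = e^{-\pi\sqrt{-d_K}}$, so $|q_{\theta_Q}| = A^{1/a}$.

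Next I would extract the dominant behavior. Taking logs (base $A$, say), the right-hand side has $\mathrm{ord}$-type contribution $\tfrac12\mathbf{B}_2(0) = \tfrac{1}{12}$ from $g_{(0,\frac1N)}$ but it appears to the fourth power in the denominator, against $\tfrac12\mathbf{B}_2(0)=\tfrac{1}{12}$ from $g_{(0,\frac2N)}$; by \eqref{order} the net $q_\theta$-order of $y_{(0,\frac1N)}=-g_{(0,\frac2N)}/g_{(0,\frac1N)}^4$ is $\tfrac12\mathbf{B}_2(0) - 4\cdot\tfrac12\mathbf{B}_2(0) = -\tfrac{3}{2}\cdot\tfrac16 = -\tfrac14$, so $|\text{RHS}| \approx A^{-1/4}$ up to bounded factors. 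For the left-hand side with $a\ge 2$: the net $q_{\theta_Q}$-order of $g_{(\frac{2s}{N},\frac{2t}{N})}/g_{(\frac{s}{N},\frac{t}{N})}^4$ is $\tfrac12\mathbf{B}_2(\langle\tfrac{2s}{N}\rangle) - 2\mathbf{B}_2(\langle\tfrac{s}{N}\rangle)$, which—exactly as computed in the Remark following Proposition \ref{surface}—lies in $[-\tfrac14,\tfrac14]$ and is $\ge -\tfrac14$, with the minimum $-\tfrac14$ attained only when $\langle\tfrac{s}{N}\rangle=0$. So $|\text{LHS}|$ is, to leading order, $A^{(1/a)\cdot e}$ with $e\ge-\tfrac14$, giving $|\text{LHS}|\lesssim A^{-1/(4a)} \le A^{-1/8}$ when $a\ge2$, which is already much smaller than $A^{-1/4}$. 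The remaining work is to make the "up to bounded factors" rigorous: bound $|1-q_z|$ from below on the RHS and from above on the LHS, and control the two infinite products $\prod_{n\ge1}|1-q_\tau^n q_z^{\pm1}|$ on both sides by $\prod(1-|q_\tau|^n|q_z|^{\pm1})^{\pm1}$-type estimates, which is exactly where Lemma \ref{ineq} is designed to be used—inequality \eqref{A} lets one replace $1/(1-A^{X/a})$ by $1+A^{X/(1.03a)}$, and then part (ii) converts products of such terms into a clean exponential bound.

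Assembling, I would: (1) write $|\text{LHS}|/|\text{RHS}|$ as a product of (a) a power of $A$ coming from the leading $q$-orders, which is $\le A^{-1/(4a)+1/4} = A^{1/4 - 1/(4a)}$, strictly less than $1$ since $a\ge2$ and $A<1$—no wait, $A<1$ so we need the exponent positive, and $1/4-1/(4a)>0$ for $a\ge2$, good; (b) a ratio of $|1-q_z|$ factors, bounded using that $|q_z|$ lies on the unit circle so $|1-q_z|=2|\sin(\pi t/N)|$-type quantities, uniformly bounded above and below away from $0$ for the relevant arguments (here one uses $N\ge8$ and the constraint $(2s,2t)\notin N\mathbb{Z}^2$ to keep $1-q_z\ne0$); and (c) a ratio of infinite products, each factor of which is squeezed between $(1\pm|q|^n)$-type bounds and hence globally bounded by Lemma \ref{ineq}. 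Then I would check that the product of the explicit numerical bounds from (a), (b), (c) is $<1$ using $d_K\le-39$ (equivalently $A\le e^{-\pi\sqrt{39}}$, very small) and $N\ge8$. The main obstacle is step (b)–(c): the leading-order comparison is comfortable, but one must verify that the secondary factors—especially the $|1-q_z|$ terms on the right-hand side, where $q_z=e^{2\pi i/N}$ with $N$ possibly large makes $|1-q_z|=2\sin(\pi/N)$ small, of size $\sim 2\pi/N$—do not overwhelm the gain $A^{1/4-1/(4a)}$ from the leading term. Since $A^{1/4-1/(4a)} \le A^{1/8}= e^{-\pi\sqrt{39}/8}$ is extremely small (far smaller than any polynomial-in-$N$ factor for the $N$ in range), this works, but it is precisely the point where the hypotheses $d_K\le-39$ and $N\ge8$ must be used quantitatively, and where care with the case $\langle s/N\rangle=0$ versus $\langle s/N\rangle\ne0$ on the left is needed so that the left-hand exponent does not also sit at the extreme value $-1/4$ in a way that could interact badly with $a$.
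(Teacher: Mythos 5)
Your outline reproduces the paper's strategy almost exactly: reduce to $0\le s\le N/2$, expand both sides by \eqref{FourierSiegel}, observe $|q_{\theta_Q}|=A^{1/a}$ and that the ratio of leading powers is $A^{\frac14-\frac1{4a}+\frac{s}{aN}}\le A^{1/8}$ for $a\ge2$, and control the infinite products with Lemma \ref{ineq} before a final numerical check using $d_K\le-39$. However, the step you yourself single out as the main obstacle is resolved incorrectly, and this is a genuine gap. You argue that the leftover factors involving $1-q_z$ are harmless because $A^{1/4-1/(4a)}\le A^{1/8}=e^{-\pi\sqrt{39}/8}$ is ``far smaller than any polynomial-in-$N$ factor for the $N$ in range.'' But $N\ge8$ is unbounded while $A$ depends only on $d_K$; numerically $A^{1/8}\approx0.086$, so the gain is only a fixed constant $\approx 11.6^{-1}$. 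Meanwhile the genuinely dangerous term is not the $2\sin(\pi/N)$ on the right-hand side (in the ratio it appears as $\lvert(1-\zeta_N)^4/(1-\zeta_N^2)\rvert\sim (2\pi)^3/(2N^3)$, which helps), but the left-hand factor $\lvert 1-e^{2\pi i(\frac{s}{N}\theta_Q+\frac{t}{N})}\rvert^{-4}$ when $s\neq0$ is small (e.g.\ $s=1$, $t=0$, $a$ near $D$), which after cancelling one power against the numerator is of size roughly $2\big(N/(\pi\sqrt3)\big)^3$. Already for $N\ge10$ this exceeds $A^{-1/8}$, so a bound of the form ``constant gain beats the secondary factors separately'' fails for all large $N$.

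What makes the lemma work — and what the paper does — is that these two $N$-dependent factors must be multiplied together before estimating: the proof groups them into
$T(N,s,t)=\big\lvert\tfrac{(1-\zeta_N)^3}{1+\zeta_N}\big\rvert\,\big\lvert\tfrac{1+e^{2\pi i(\frac{s}{N}\theta_Q+\frac{t}{N})}}{(1-e^{2\pi i(\frac{s}{N}\theta_Q+\frac{t}{N})})^3}\big\rvert$,
and shows $T\le1$ when $s=0$ (a sine/cosine monotonicity argument, using $(2s,2t)\notin N\mathbb{Z}^2$) and $T\le\frac{4\sin^3\frac{\pi}{N}}{\cos\frac{\pi}{N}}\cdot\frac{1+e^{-\pi\sqrt3/N}}{(1-e^{-\pi\sqrt3/N})^3}<3.05$ uniformly for $N\ge8$ when $s\neq0$ — here the $\sin^3\frac{\pi}{N}\sim N^{-3}$ exactly cancels the $\sim N^{3}$ growth, which is why the bound is a constant independent of $N$. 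Only then does the fixed gain suffice: $3.05\,A^{1/8}e^{(\text{explicit sum from Lemma \ref{ineq}})}<1$ using $A\le e^{-\pi\sqrt{39}}$ and $A^{1/D}=e^{-\pi\sqrt3}$. Without this pairing (or some equivalent uniform-in-$N$ bound on the combined $1-q_z$ factors), your argument as written does not close.
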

\begin{proof}
We may assume $0\leq s\leq\tfrac{N}{2}$ by Proposition \ref{F_N}.
And, observe that $2\leq a\leq D$ by (\ref{bound a}) and $A\leq
e^{-\pi\sqrt{39}}<1$. From the Fourier expansion formula
(\ref{FourierSiegel}) we establish that
\begin{eqnarray*}
&&\bigg|\bigg(\frac{g_{(0,~\frac{2}{N})}(\theta)}{g_{(0,~\frac{1}{N})}(\theta)^4}\bigg)^{-1}
\bigg(\frac{g_{(\frac{2s}{N},~\frac{2t}{N})}(\theta_Q)}{g_{(\frac{s}{N},~\frac{t}{N})}(\theta_Q)^4}\bigg)\bigg|\\
&\leq& A^{(\frac{1}{4}-\frac{1}{4a}+\frac{s}{aN})}
T(N,~s,~t)\prod_{n=1}^\infty
\frac{(1+A^n)^8(1+A^{\frac{1}{a}(n+\frac{2s}{N})})(1+A^{\frac{1}{a}(n-\frac{2s}{N})})}
{(1-A^n)^2(1-A^{\frac{1}{a}(n+\frac{s}{N})})^4(1-A^{\frac{1}{a}(n-\frac{s}{N})})^4}\\
&\leq& A^{(\frac{1}{4}-\frac{1}{4a})} T(N,~s,~t)\prod_{n=1}^\infty
\frac{(1+A^n)^8(1+A^{\frac{n}{a}})(1+A^{\frac{1}{a}(n-1)})}
{(1-A^n)^2(1-A^{\frac{n}{a}})^4(1-A^{\frac{1}{a}(n-\frac{1}{2})})^4}\quad\textrm{by the fact $0\leq s\leq\frac{N}{2}$}\\
&\leq&A^\frac{1}{8}T(N,~s,~t) \prod_{n=1}^\infty
\frac{(1+A^n)^8(1+A^{\frac{n}{D}})(1+A^{\frac{1}{D}(n-1)})}
{(1-A^n)^2(1-A^{\frac{n}{D}})^4(1-A^{\frac{1}{D}(n-\frac{1}{2})})^4}\quad\textrm{by
the fact $2\leq a\leq D$}
\end{eqnarray*}
where
\begin{equation*}
T(N,~s,~t)=\bigg|\frac{(1-\zeta_N)^4}{1-\zeta_N^2}\bigg|\bigg|\frac{1-e^{2\pi
i(\frac{2s}{N}\theta_Q+\frac{2t}{N})}}{\big(1-e^{2\pi
i(\frac{s}{N}\theta_Q+\frac{t}{N})}\big)^4} \bigg|=
\bigg|\frac{(1-\zeta_N)^3}{1+\zeta_N}\bigg|\bigg|\frac{1+e^{2\pi
i(\frac{s}{N}\theta_Q+\frac{t}{N})}}{\big(1-e^{2\pi
i(\frac{s}{N}\theta_Q+\frac{t}{N})}\big)^3} \bigg|.
\end{equation*}
If $s=0$, then
\begin{equation*}
T(N,~s,~t)=\bigg|\bigg(\frac{1-\zeta_N}{1-\zeta_N^t}\bigg)^3\bigg|\bigg|\frac{1+\zeta_N^t}{1+\zeta_N}\bigg|=
\bigg|\bigg(\frac{\sin\frac{\pi}{N}}{\sin\frac{t\pi}{N}}\bigg)^3\bigg|\bigg|\frac{\cos\frac{t\pi}{N}}{\cos\frac{\pi}{N}}\bigg|\leq1.
\end{equation*}
If $s\neq0$, then
\begin{eqnarray*}
T(N,~s,~t)&\leq&\bigg|\frac{(1-\zeta_N)^3}{1+\zeta_N}\bigg|\frac{1+A^{\frac{1}{Na}}}{(1-A^{\frac{1}{Na}})^3}\quad\textrm{by the fact $1\leq s\leq\frac{N}{2}$}\\
&\leq&\bigg|\frac{(1-\zeta_N)^3}{1+\zeta_N}\bigg|\frac{1+A^{\frac{1}{ND}}}{(1-A^{\frac{1}{ND}})^3}
\quad\textrm{by the fact $2\leq a\leq D$}\\
&=&\frac{4\sin^3\frac{\pi}{N}}{\cos\frac{\pi}{N}}\frac{1+e^{-\frac{\pi\sqrt{3}}{N}}}{(1-e^{-\frac{\pi\sqrt{3}}{N}})^3}
\quad\textrm{by the fact $A^\frac{1}{D}=e^{-\pi\sqrt{3}}$}\\
&<&3.05\quad\textrm{from the graph on $N\geq8$}.
\end{eqnarray*}
 Therefore we
achieve that
\begin{eqnarray*}
&&\bigg|\bigg(\frac{g_{(0,~\frac{2}{N})}(\theta)}{g_{(0,~\frac{1}{N})}(\theta)^4}\bigg)^{-1}
\bigg(\frac{g_{(\frac{2s}{N},~\frac{2t}{N})}(\theta_Q)}{g_{(\frac{s}{N},~\frac{t}{N})}(\theta_Q)^4}\bigg)\bigg|\\
&<&3.05A^\frac{1}{8}\prod_{n=1}^\infty\frac{
(1+A^n)^8(1+A^\frac{n}{D})(1+A^{\frac{1}{D}(n-1)})}{(1+A^\frac{n}{1.03})^{-2}(1+A^\frac{n}{1.03D})^{-4}(1+A^{\frac{1}{1.03D}(n-\frac{1}{2})})^{-4}}
\quad\textrm{by Lemma \ref{ineq}(i)}
\\
&<&3.05A^\frac{1}{8}\prod_{n=1}^\infty
e^{8A^n+A^\frac{n}{D}+A^{\frac{1}{D}(n-1)}+2A^\frac{n}{1.03}+4A^\frac{n}{1.03D}+4A^{\frac{1}{1.03D}(n-\frac{1}{2})}}
\quad\textrm{by Lemma \ref{ineq}(ii)}\\
&=&3.05A^\frac{1}{8}e^{\frac{8A}{1-A}+\frac{A^\frac{1}{D}}{1-A^\frac{1}{D}}+\frac{1}{1-A^\frac{1}{D}}+\frac{2A^\frac{1}{1.03}}{1-A^\frac{1}{1.03}}
+\frac{4A^\frac{1}{1.03D}}{1-A^\frac{1}{1.03D}}
+\frac{4A^\frac{1}{2.06D}}{1-A^\frac{1}{1.03D}}}\\
&<&1\quad\textrm{by the facts $A\leq e^{-\pi\sqrt{39}}$ and
$A^\frac{1}{D}=e^{-\pi\sqrt{3}}$}.
\end{eqnarray*}
This proves the lemma.
\end{proof}

Now we are ready to prove our main theorem of generating ray class
fields.

\begin{theorem}\label{main}
Let $K$ be an imaginary quadratic field with $d_K\leq-39$ and
$N\geq8$. Then
\begin{equation*}
K_{(N)}=K\bigg(x_{(0,~\frac{1}{N})}(\theta),~y_{(0,~\frac{1}{N})}(\theta)^\frac{4}{\gcd(4,~N)}\bigg).
\end{equation*}
In particular, if $4~|~N$ then $K_{(N)}$ is generated by adjoining
to $K$ the $N$-torsion point
\begin{equation}\label{point}
P=\bigg(x_{(0,~\frac{1}{N})}(\theta),~y_{(0,~\frac{1}{N})}(\theta)\bigg)
\end{equation}
of the elliptic curve
\begin{equation}\label{curve}
u(\theta)v(\theta)^3y^2=4x^3-u(\theta)v(\theta)^2x-u(\theta)v(\theta)^4.
\end{equation}
\end{theorem}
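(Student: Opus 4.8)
The plan is to establish the two inclusions separately; set $k=4/\gcd(4,N)$ and write $x=x_{(0,\frac{1}{N})}$, $y=y_{(0,\frac{1}{N})}$. First I would verify that $x$ and $y^k$ both lie in $\mathcal{F}_N$ and are finite at $\theta$, so that Proposition \ref{cm}(ii) gives $K\big(x(\theta),y(\theta)^k\big)\subseteq K_{(N)}$. For $x=-\tfrac{1}{2^73^5}f_{(0,\frac{1}{N})}$ membership in $\mathcal{F}_N$ is immediate from (\ref{defF_N}). For $y^k=(-1)^k g_{(0,\frac{2}{N})}^{\,k}\,g_{(0,\frac{1}{N})}^{-4k}$ I would apply Proposition \ref{Siegelmodularity}: the first three conditions hold because each index has first coordinate $0$ and $k\cdot2^2-4k\cdot1^2=0$, while the last condition amounts to $-3k\gcd(12,N)\equiv0\pmod{12}$, which a short case check (according as $4\mid N$, $N$ is odd, or $N\equiv2\pmod4$) shows is forced precisely by the choice $k=4/\gcd(4,N)$. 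Finiteness at $\theta$ holds since $\Delta(\theta)\neq0$ and Siegel functions are zero-free and pole-free on $\mathfrak{H}$.

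For the reverse inclusion I would show that any $\sigma\in\mathrm{Gal}(K_{(N)}/K)$ fixing both $x(\theta)$ and $y(\theta)^k$ is trivial. By Proposition \ref{conjugate} write $\sigma\leftrightarrow(\alpha,Q)$ with $\alpha\in W_{N,\theta}/\{\pm1_2\}$ and $Q=aX^2+bXY+cY^2$ a reduced form, so that $\sigma\big(h(\theta)\big)=h^{\alpha\beta_Q}(\theta_Q)$. The first step is to force $Q$ to be principal. Put $(s,t)\equiv(0,1)\alpha\beta_Q\pmod{N}$; since $(0,1)$ is primitive mod $N$ and $\alpha\beta_Q\in\mathrm{GL}_2(\mathbb{Z}/N\mathbb{Z})$, $\gcd(s,t,N)=1$, whence $(2s,2t)\notin N\mathbb{Z}^2$ because $N\geq8$. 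From $y^{12N}=g_{(0,\frac{2}{N})}^{12N}\,g_{(0,\frac{1}{N})}^{-48N}$ and Proposition \ref{F_N} one obtains $\big(y^{12N}\big)^{\alpha\beta_Q}=\big(g_{(\frac{2s}{N},\frac{2t}{N})}\,g_{(\frac{s}{N},\frac{t}{N})}^{-4}\big)^{12N}$, hence $\big(y^{k}\big)^{\alpha\beta_Q}=\zeta\cdot\big(g_{(\frac{2s}{N},\frac{2t}{N})}/g_{(\frac{s}{N},\frac{t}{N})}^{4}\big)^{k}$ for some root of unity $\zeta$. Taking absolute values at $\theta_Q$ and using $\sigma\big(y(\theta)^k\big)=y(\theta)^k$, if $a\geq2$ then Lemma \ref{newlemma} gives
\[
\big|y(\theta)^k\big|=\left|\frac{g_{(\frac{2s}{N},\frac{2t}{N})}(\theta_Q)}{g_{(\frac{s}{N},\frac{t}{N})}(\theta_Q)^{4}}\right|^{k}<\big|y(\theta)^k\big|,
\]
a contradiction. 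So $a=1$, i.e.\ $Q$ is the principal form; then $\theta_Q=\theta$ and $\beta_Q=1_2$.

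Now $\sigma$ acts by $h(\theta)\mapsto h^\alpha(\theta)$ with $\alpha=\left(\begin{smallmatrix}t-B_\theta s&-C_\theta s\\s&t\end{smallmatrix}\right)$ and $(s,t)\equiv(0,1)\alpha\pmod N$. Since $x^\alpha=x_{(0,\frac{1}{N})\alpha}=x_{(\frac{s}{N},\frac{t}{N})}$ (the Galois action sends $f_{(r_1,r_2)}$ to $f_{(r_1,r_2)\alpha}$) and $g_2(\theta)g_3(\theta)\neq0$ — because $d_K\leq-39$ keeps $\theta$ off the $\mathrm{SL}_2(\mathbb{Z})$-orbits of $\zeta_3$ and $\zeta_4$, by Proposition \ref{gFourier}(ii) — the identity $\sigma\big(x(\theta)\big)=x(\theta)$ reduces via (\ref{Fricke}) to $\wp\big(\tfrac{s\theta+t}{N};\mathcal{O}_K\big)=\wp\big(\tfrac{1}{N};\mathcal{O}_K\big)$; Lemma \ref{same}(ii) then yields $s\theta+t\equiv\pm1\pmod{N\mathcal{O}_K}$, i.e.\ $s\equiv0$ and $t\equiv\pm1\pmod N$, so $\alpha\equiv\pm1_2\pmod N$ and $\sigma=\mathrm{id}$. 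This proves $K_{(N)}=K\big(x(\theta),y(\theta)^k\big)$. When $4\mid N$ we have $k=1$, so $K_{(N)}=K\big(x(\theta),y(\theta)\big)=K(P)$; moreover (\ref{curve}) is exactly (\ref{second}) together with (\ref{additional}) specialized at $\tau=\theta$, $(r_1,r_2)=(0,\tfrac{1}{N})$, and the substitution in (\ref{setting}) (using $\eta^{24}=\Delta$) exhibits (\ref{curve}) as the image of $Y^2Z=4X^3-g_2(\mathcal{O}_K)XZ^2-g_3(\mathcal{O}_K)Z^3$ under a $\mathbb{C}$-isomorphism sending $\varphi(\tfrac{1}{N})$ to $P$, so $P$ is an $N$-torsion point of (\ref{curve}).

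The delicate points are: (a) pinning down the exact exponent $k=4/\gcd(4,N)$ for which $y^k\in\mathcal{F}_N$, so that Shimura reciprocity (Proposition \ref{conjugate}) applies without having to pass to a larger modular function field; and (b) expressing the Galois conjugates of $y(\theta)^k$ as ratios of Siegel functions cleanly enough (via Proposition \ref{F_N}) to feed them into the analytic inequality of Lemma \ref{newlemma}, which is what rules out all non-principal forms $Q$ and is really the crux of the argument. Once $Q$ is known to be principal the remaining steps are routine bookkeeping with the explicit reciprocity law and Lemma \ref{same}(ii).
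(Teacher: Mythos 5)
Your proposal is correct and follows essentially the same route as the paper: membership in $K_{(N)}$ via Proposition \ref{Siegelmodularity} and Proposition \ref{cm}(ii), then Shimura reciprocity (Proposition \ref{conjugate}) together with Proposition \ref{F_N} and the key inequality of Lemma \ref{newlemma} to force $Q$ to be principal, and finally the $x$-coordinate to force $\alpha\equiv\pm1_2$. The only cosmetic difference is at the last step, where the paper simply quotes Proposition \ref{cm}(iii) and Proposition \ref{rayGalois} to conclude $\alpha$ is trivial, whereas you re-derive this by hand from the Fricke function and Lemma \ref{same}(ii); both are valid.
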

\begin{proof}
Since $x_{(0,~\frac{1}{N})}(\tau)\in\mathcal{F}_N$ by definition
(\ref{defF_N}) and
$y_{(0,~\frac{1}{N})}(\tau)^\frac{4}{\gcd(4,~N)}\in\mathcal{F}_N$ by
Proposition \ref{Siegelmodularity}, their singular values
$x_{(0,~\frac{1}{N})}(\theta)$ and
$y_{(0,~\frac{1}{N})}(\theta)^\frac{4}{\gcd(4,~N)}$ lie in $K_{(N)}$
by Proposition \ref{cm}(ii). Assume that any element $(\alpha,~Q)\in
W_{N,~\theta}/\{\pm1_2\}\times \mathrm{C}(d_K)$ fixes both
$x_{(0,~\frac{1}{N})}(\theta)$ and
$y_{(0,~\frac{1}{N})}(\theta)^\frac{4}{\gcd(4,~N)}$. Then we derive
by Propositions \ref{conjugate} and \ref{F_N} that
\begin{eqnarray*}
y_{(0,~\frac{1}{N})}(\theta)^{12N}=\bigg(y_{(0,~\frac{1}{N})}(\theta)^{12N}\bigg)^{(\alpha,~Q)}
=\frac{g_{(0,~\frac{2}{N})\alpha\beta_Q}(\theta_Q)^{12N}}{g_{(0,~\frac{1}{N})\alpha\beta_Q}(\theta_Q)^{48N}}
=\frac{g_{(\frac{2s}{N},~\frac{2t}{N})}(\theta_Q)^{12N}}{g_{(\frac{s}{N},~\frac{t}{N})}(\theta_Q)^{48N}}
\end{eqnarray*}
for some $(s,~t)\in\mathbb{Z}^2$ with $(2s,~2t)\not\in
N\mathbb{Z}^2$. This yields
\begin{equation*}
\bigg|\frac{g_{(0,~\frac{2}{N})}(\theta)}{g_{(0,~\frac{1}{N})}(\theta)^4}\bigg|=
\bigg|\frac{g_{(\frac{2s}{N},~\frac{2t}{N})}(\theta_Q)}{g_{(\frac{s}{N},~\frac{t}{N})}(\theta_Q)^4}\bigg|.
\end{equation*}
Then it follows from Lemma \ref{newlemma} and the conditions
(\ref{reduced}) and (\ref{disc}) for reduced quadratic forms that
\begin{equation*}
Q=\left\{\begin{array}{ll} X^2-\frac{d_K}{4}Y^2 & \textrm{for $d_K\equiv0\pmod{4}$}\\
X^2+XY+\frac{1-d_K}{4}Y^2 & \textrm{for $d_K\equiv1\pmod{4}$},
\end{array}\right.
\end{equation*}
and hence $\beta_Q=1_2$ by (\ref{u1}) and (\ref{u2}), and
$\theta_Q=\theta$ by (\ref{theta_Q}). We then obtain by Propositions
\ref{conjugate} and \ref{rayGalois} that
\begin{equation*}
x_{(0,~\frac{1}{N})}(\theta)=\bigg(x_{(0,~\frac{1}{N})}(\theta)\bigg)^{(\alpha,~Q)}=
x_{(0,~\frac{1}{N})}^{\alpha\beta_Q}(\theta_Q)=
x_{(0,~\frac{1}{N})}^\alpha(\theta)=
\bigg(x_{(0,~\frac{1}{N})}(\theta)\bigg)^\alpha.
\end{equation*}
Hence $\alpha$ should be the identity in $W_{N,~\theta}/\{\pm1_2\}$
because $x_{(0,~\frac{1}{N})}(\theta)$ generates $K_{(N)}$ over $H$
by Proposition \ref{cm}(iii). Therefore $(\alpha,~Q)$ represents the
identity in $\mathrm{Gal}(K_{(N)}/K)$, which proves that the
singular values $x_{(0,~\frac{1}{N})}(\theta)$ and
$y_{(0,~\frac{1}{N})}(\theta)^\frac{4}{\gcd(4,~N)}$ indeed generate
$K_{(N)}$ over $K$.
\par
On the other hand, Proposition \ref{gFourier}(ii) implies that
$u(\theta),~v(\theta)\neq0$, and hence the equation in (\ref{curve})
represents an elliptic curve. And, (\ref{second}) shows that the
point $P$ in (\ref{point}) lies on the elliptic curve as $N$-torsion
point. The proof of the remaining part of the theorem (the case
$4~|~N$) is the same as that of the first part.
\end{proof}

\section{Primitive generators of ray class fields}

In this last section we shall show that some ray class invariants of
imaginary quadratic fields can be constructed from $y$-coordinates
of the elliptic curve in (\ref{curve}) by utilizing the idea of
Schertz(\cite{Schertz}).
\par
Let $K$ be an imaginary quadratic field with discriminant $d_K$ and
$\theta$ as in (\ref{theta}). For a nonzero integral ideal
$\mathfrak{f}$ of $K$ we denote by $\mathrm{Cl}(\mathfrak{f})$ the
ray class group of conductor $\mathfrak{f}$ and write $C_0$ for its
unit class. By definition the ray class field $K_\mathfrak{f}$
modulo $\mathfrak{f}$ of $K$ is a finite abelian extension of $K$
whose Galois group is isomorphic to $\mathrm{Cl}(\mathfrak{f})$ via
the (inverse of) Artin map. If $\mathfrak{f}\neq\mathcal{O}_K$ and
$C\in\mathrm{Cl}(\mathfrak{f})$, then we take an integral ideal
$\mathfrak{c}$ in $C$ so that
$\mathfrak{f}\mathfrak{c}^{-1}=[z_1,~z_2]$ with
$z=\frac{z_1}{z_2}\in\mathfrak{H}$. Now we define the
\textit{Siegel-Ramachandra invariant} by
\begin{equation*}
g_\mathfrak{f}(C)=g_{(\frac{a}{N},~\frac{b}{N})}(z)^{12N}
\end{equation*}
where $N$ is the smallest positive integer in $\mathfrak{f}$ and
$a,~b\in\mathbb{Z}$ such that $1=\frac{a}{N}z_1+\frac{b}{N}z_2$.
This value depends only on the class $C$ and belongs to
$K_\mathfrak{f}$. Furthermore, we have a well-known transformation
formula
\begin{equation}\label{Artin}
g_\mathfrak{f}(C_1)^{\sigma(C_2)}=g_\mathfrak{f}(C_1C_2)
\end{equation}
for $C_1,~C_2\in\mathrm{Cl}(\mathfrak{f})$ where $\sigma$ is the
Artin map(\cite{K-L} Chapter 11 Section 1).
\par
Let $\chi$ be a character of $\mathrm{Cl}(\mathfrak{f})$. We then
denote by $\mathfrak{f}_\chi$ the conductor of $\chi$ and let
$\chi_0$ be the proper character of $\mathrm{Cl}(\mathfrak{f}_\chi)$
corresponding to $\chi$. For a nontrivial character $\chi$ of
$\mathrm{Cl}(\frak{f})$ with $\mathfrak{f}\neq\mathcal{O}_K$ we
define the \textit{Stickelberger element} and the
\textit{$L$-function} as follows:
\begin{eqnarray*} S_\mathfrak{f}(\chi,~g_\mathfrak{f})
&=&\sum_{C\in\mathrm{Cl}(\mathfrak{f})}
\chi(C)\log|g_\mathfrak{f}(C)|\\
L_\mathfrak{f}(s,~\chi)&=&\sum_{\begin{smallmatrix}\mathfrak{a}\neq0~:~\textrm{integral
ideals}\\\gcd(\mathfrak{a},~\mathfrak{f})=\mathcal{O}_K\end{smallmatrix}}\frac{\chi(\mathfrak{a})}{\mathbf{N}_{K/\mathbb{Q}}(\mathfrak{a})^s}\qquad(s\in\mathbb{C}).
\end{eqnarray*}
If $\mathfrak{f}_\chi\neq\mathcal{O}_K$, then we see from the second
Kronecker limit formula that
\begin{equation*}
L_{\mathfrak{f}_\chi}(1,~\chi_0)=T_0S_{\mathfrak{f}_\chi}(\overline{\chi}_0,~g_{\mathfrak{f}_\chi})
\end{equation*}
where $T_0$ is certain nonzero constant depending on
$\chi_0$(\cite{Lang} Chapter 22 Theorem 2). Here we observe that the
value $L_{\mathfrak{f}_\chi}(1,~\chi_0)$ is nonzero(\cite{Janusz}
Chapter IV Proposition 5.7). Moreover, multiplying the above
relation by the Euler factor we derive the identity
\begin{equation}\label{relation}
\prod_{\mathfrak{p}|\mathfrak{f},~
\mathfrak{p}\nmid\mathfrak{f}_\chi}\big(1-\overline{\chi}_0(\mathfrak{p})\big)L_{\mathfrak{f}_\chi}(1,~\chi_0)
=TS_\mathfrak{f}(\overline{\chi},~g_\mathfrak{f})
\end{equation}
where $T$ is certain nonzero constant depending on $\mathfrak{f}$
and $\chi$(\cite{K-L} Chapter 11 Section 2 LF 2).

\begin{lemma}\label{degree}
Let $\mathfrak{f}$ be an integral ideal of $K$. Then we have the
degree formula
\begin{equation*}
[K_{\mathfrak{f}}:K]=\frac{h_K\phi(\mathfrak{f})w(\mathfrak{f})}{w_K}
\end{equation*}
where $h_K$ is the class number of $K$, $\phi$ is the Euler function
for ideals, namely
\begin{equation*}
\phi(\mathfrak{p}^n)=\big(\mathbf{N}_{K/\mathbb{Q}}(\mathfrak{p})-1\big)\mathbf{N}_{K/\mathbb{Q}}(\mathfrak{p})^{n-1}
\end{equation*}
for a power of prime ideal $\mathfrak{p}$ (and we set
$\phi(\mathcal{O}_K)=1$), $w(\mathfrak{f})$ is the number of roots
of unity in $K$ which are $\equiv1\pmod{\mathfrak{f}}$ and $w_K$ is
the number of roots of unity in $K$.
\end{lemma}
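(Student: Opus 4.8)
The plan is to reduce the degree formula to a purely group-theoretic count, using the isomorphism $\mathrm{Gal}(K_\mathfrak{f}/K)\cong\mathrm{Cl}(\mathfrak{f})$ recalled above, so that $[K_\mathfrak{f}:K]=\abs{\mathrm{Cl}(\mathfrak{f})}$. Everything then rests on the arithmetic identity
\[
\abs{\mathrm{Cl}(\mathfrak{f})}=\frac{h_K\,\phi(\mathfrak{f})\,w(\mathfrak{f})}{w_K},
\]
which I would extract from the standard four-term exact sequence relating the ray class group to the ordinary ideal class group $\mathrm{Cl}(\mathcal{O}_K)$ and to $(\mathcal{O}_K/\mathfrak{f})^*$.

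Concretely, writing $I(\mathfrak{f})$ for the group of fractional ideals of $K$ prime to $\mathfrak{f}$, $P(\mathfrak{f})$ for its subgroup of principal ideals, and $P_1(\mathfrak{f})$ for those principal ideals admitting a generator $\equiv 1\pmod{\mathfrak{f}}$, so that $\mathrm{Cl}(\mathfrak{f})=I(\mathfrak{f})/P_1(\mathfrak{f})$, I would establish the exact sequence
\[
1\longrightarrow \mathcal{O}_K^{*}/\mu_{\mathfrak{f}}\longrightarrow (\mathcal{O}_K/\mathfrak{f})^{*}\longrightarrow \mathrm{Cl}(\mathfrak{f})\longrightarrow \mathrm{Cl}(\mathcal{O}_K)\longrightarrow 1,
\]
where $\mu_{\mathfrak{f}}$ is the group of roots of unity of $K$ congruent to $1$ modulo $\mathfrak{f}$, of order $w(\mathfrak{f})$, and $\mathcal{O}_K^{*}=\mu_K$ has order $w_K$ since $K$ is imaginary quadratic. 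The middle map sends $\alpha\bmod\mathfrak{f}$ to the class of $(\alpha)$; one checks its image is $P(\mathfrak{f})/P_1(\mathfrak{f})$ and its kernel is the image of $\mathcal{O}_K^{*}$, while the last map is the evident projection $I(\mathfrak{f})/P_1(\mathfrak{f})\to I(\mathcal{O}_K)/P(\mathcal{O}_K)$, which is surjective because every ideal class contains an integral ideal prime to $\mathfrak{f}$.

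Granting exactness, taking orders (the alternating product of the orders of the terms of a finite exact sequence equals $1$) gives
\[
\abs{\mathrm{Cl}(\mathfrak{f})}=h_K\cdot\abs{(\mathcal{O}_K/\mathfrak{f})^{*}}\cdot\frac{w(\mathfrak{f})}{w_K},
\]
and the Chinese remainder theorem together with $\abs{(\mathcal{O}_K/\mathfrak{p}^n)^{*}}=\mathbf{N}_{K/\mathbb{Q}}(\mathfrak{p})^n-\mathbf{N}_{K/\mathbb{Q}}(\mathfrak{p})^{n-1}=\phi(\mathfrak{p}^n)$ yields $\abs{(\mathcal{O}_K/\mathfrak{f})^{*}}=\phi(\mathfrak{f})$; substituting proves the formula. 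The degenerate case $\mathfrak{f}=\mathcal{O}_K$, where $\phi(\mathcal{O}_K)=1$, $w(\mathfrak{f})=w_K$, and $K_\mathfrak{f}=H$, is consistent and may be verified directly.

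The main obstacle is the careful verification of exactness of the displayed sequence: the surjectivity of $I(\mathfrak{f})\to\mathrm{Cl}(\mathcal{O}_K)$ and, above all, the weak-approximation argument that identifies the kernel and image of $(\mathcal{O}_K/\mathfrak{f})^{*}\to\mathrm{Cl}(\mathfrak{f})$ — one must adjust a generator of a principal ideal prime to $\mathfrak{f}$ by an element $\equiv 1\pmod{\mathfrak{f}}$ to make it an algebraic integer prime to $\mathfrak{f}$, and recognize that $(\alpha)\in P_1(\mathfrak{f})$ exactly when $\alpha$ is congruent to a unit modulo $\mathfrak{f}$ — together with the bookkeeping of roots of unity that produces the factor $w(\mathfrak{f})/w_K$, this last step being where finiteness of $\mathcal{O}_K^{*}$ (i.e.\ $K$ imaginary quadratic) is essential. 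The remaining steps are routine.
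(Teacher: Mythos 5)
Your argument is correct, and it is essentially the standard proof of the result the paper invokes: the paper gives no proof of its own but simply cites Lang, \emph{Algebraic Number Theory}, Chapter VI, Theorem 1, whose general ray class number formula is established by exactly the four-term exact sequence $1\to\mathcal{O}_K^*/\mu_{\mathfrak{f}}\to(\mathcal{O}_K/\mathfrak{f})^*\to\mathrm{Cl}(\mathfrak{f})\to\mathrm{Cl}(\mathcal{O}_K)\to1$ you describe, combined with $[K_\mathfrak{f}:K]=\#\mathrm{Cl}(\mathfrak{f})$ from class field theory. The only difference is that you specialize from the start to imaginary quadratic $K$, where the unit index $[\mathcal{O}_K^*:U_{\mathfrak{f},1}]$ appearing in Lang's general formula becomes $w_K/w(\mathfrak{f})$ because every unit is a root of unity; your counting and the CRT computation of $\#(\mathcal{O}_K/\mathfrak{f})^*=\phi(\mathfrak{f})$ are correct, and the exactness verifications you flag as the main work are the standard weak-approximation arguments.
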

\begin{proof}
See \cite{Lang2} Chapter VI Theorem 1.
\end{proof}

\begin{theorem}\label{generator}
Let $\mathfrak{f}\neq\mathcal{O}_K$ be an integral ideal of $K$ with
prime ideal factorization
\begin{equation*}
\mathfrak{f}=\prod_{k=1}^n \mathfrak{p}_k^{e_k}.
\end{equation*}
Assume that
\begin{equation}\label{hypothesis}
[K_\mathfrak{f}:K]>2\sum_{k=1}^n
[K_{\mathfrak{f}\mathfrak{p}_k^{-e_k}}:K].
\end{equation}
Then the singular value
\begin{equation*}
\varepsilon=\frac{g_\mathfrak{f}(C')}{g_\mathfrak{f}(C_0)^4}\quad\textrm{for
any class $C'\in\mathrm{Cl}(\mathfrak{f})$}
\end{equation*}
generates $K_\mathfrak{f}$ over $K$.
\end{theorem}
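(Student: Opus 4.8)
The plan is to argue by contradiction, following the Stickelberger‑element technique of Schertz. Write $G=\mathrm{Cl}(\mathfrak{f})$, let $\sigma:G\to\mathrm{Gal}(K_\mathfrak{f}/K)$ be the inverse of the Artin map, and put $F=K(\varepsilon)\subseteq K_\mathfrak{f}$. Suppose $F\neq K_\mathfrak{f}$. Then $\mathcal{H}:=\sigma^{-1}\big(\mathrm{Gal}(K_\mathfrak{f}/F)\big)$ is a subgroup of $G$ with $|\mathcal{H}|\geq 2$, and $\sigma(C)$ fixes $\varepsilon$ for every $C\in\mathcal{H}$. First I would use the transformation formula (\ref{Artin}) and $C_0C=C$ to record, for all $C\in G$,
\begin{equation*}
\varepsilon^{\sigma(C)}=\frac{g_\mathfrak{f}(C'C)}{g_\mathfrak{f}(C)^4},\qquad\textrm{so}\qquad\log|\varepsilon^{\sigma(C)}|=\log|g_\mathfrak{f}(C'C)|-4\log|g_\mathfrak{f}(C)|.
\end{equation*}
Since $\mathrm{Gal}(K_\mathfrak{f}/K)$ is abelian and $\sigma(\mathcal{H})$ fixes $\varepsilon$, one has $\varepsilon^{\sigma(Ch)}=\big(\varepsilon^{\sigma(C)}\big)^{\sigma(h)}=\varepsilon^{\sigma(C)}$ for all $C\in G$ and $h\in\mathcal{H}$, so the function $C\mapsto\log|\varepsilon^{\sigma(C)}|$ is constant on the cosets of $\mathcal{H}$.

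Next, let $\chi$ be any character of $G$ that is nontrivial on $\mathcal{H}$; such $\chi$ exist because $\mathcal{H}\neq 1$. Pairing the coset‑constant function above with $\chi$ over $G$ and using $\sum_{h\in\mathcal{H}}\chi(h)=0$ gives $\sum_{C\in G}\chi(C)\log|\varepsilon^{\sigma(C)}|=0$. On the other hand, re‑indexing the first term via $D=C'C$ yields
\begin{equation*}
\sum_{C\in G}\chi(C)\log|\varepsilon^{\sigma(C)}|=\big(\chi(C')^{-1}-4\big)\,S_\mathfrak{f}(\chi,~g_\mathfrak{f}).
\end{equation*}
As $\chi(C')$ is a root of unity, $\chi(C')^{-1}-4\neq0$, and therefore $S_\mathfrak{f}(\chi,~g_\mathfrak{f})=0$ for every character $\chi$ of $G$ that is nontrivial on $\mathcal{H}$.

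Now I would feed in the analytic input. For $\chi$ with $\mathfrak{f}_\chi\neq\mathcal{O}_K$, apply the identity (\ref{relation}) to $\overline{\chi}$ (again nontrivial on $\mathcal{H}$): the right‑hand side is $T\,S_\mathfrak{f}(\chi,~g_\mathfrak{f})=0$, while $T\neq0$ and $L_{\mathfrak{f}_\chi}(1,~\overline{\chi}_0)\neq0$, so the Euler product $\prod_{\mathfrak{p}\mid\mathfrak{f},~\mathfrak{p}\nmid\mathfrak{f}_\chi}\big(1-\chi_0(\mathfrak{p})\big)$ vanishes; the case $\mathfrak{f}_\chi=\mathcal{O}_K$ is handled the same way via the first Kronecker limit formula, the relevant $L$‑value again being nonzero. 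Hence every character $\chi$ of $G$ nontrivial on $\mathcal{H}$ satisfies $\chi_0(\mathfrak{p}_k)=1$ for some $k\in\{1,\dots,n\}$ with $\mathfrak{p}_k\nmid\mathfrak{f}_\chi$; equivalently $\chi$ factors through $\mathrm{Cl}(\mathfrak{f}\mathfrak{p}_k^{-e_k})$ and is trivial on the class of $\mathfrak{p}_k$ in that group, so for each fixed $k$ the number of such $\chi$ is $[K_{\mathfrak{f}\mathfrak{p}_k^{-e_k}}:K]/\mathrm{ord}([\mathfrak{p}_k])\leq[K_{\mathfrak{f}\mathfrak{p}_k^{-e_k}}:K]$. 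Since the number of characters of $G$ nontrivial on $\mathcal{H}$ equals $|G|-|G|/|\mathcal{H}|\geq|G|/2=\tfrac{1}{2}[K_\mathfrak{f}:K]$, we obtain $[K_\mathfrak{f}:K]\leq 2\sum_{k=1}^n[K_{\mathfrak{f}\mathfrak{p}_k^{-e_k}}:K]$, contradicting the hypothesis (\ref{hypothesis}). Therefore $F=K_\mathfrak{f}$, i.e. $\varepsilon$ generates $K_\mathfrak{f}$ over $K$.

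The two character‑sum identities are routine once (\ref{Artin}) and the abelianness of $\mathrm{Gal}(K_\mathfrak{f}/K)$ are in hand, and the nonvanishing of the $L$‑functions is quoted. The step I expect to require the most care is the last one: correctly matching the vanishing of the Stickelberger element $S_\mathfrak{f}(\chi,~g_\mathfrak{f})$ with the conductor‑theoretic condition ``$\mathfrak{p}_k\nmid\mathfrak{f}_\chi$ and $\chi_0(\mathfrak{p}_k)=1$'', bounding the number of characters so constrained by $[K_{\mathfrak{f}\mathfrak{p}_k^{-e_k}}:K]$, and in particular dealing with the unramified characters $\mathfrak{f}_\chi=\mathcal{O}_K$, which lie slightly outside the form in which (\ref{relation}) is stated and must be treated through the first Kronecker limit formula.
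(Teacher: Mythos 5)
Your proposal is correct and follows essentially the same Schertz-style argument as the paper, merely run in the opposite order: you first show $S_\mathfrak{f}(\chi,~g_\mathfrak{f})=0$ for \emph{every} character $\chi$ nontrivial on $\mathrm{Gal}(K_\mathfrak{f}/F)$ via the coset computation and then count, while the paper counts first to produce a single character $\psi$ nontrivial on $\mathrm{Gal}(K_\mathfrak{f}/F)$ with $\mathfrak{p}_k\mid\mathfrak{f}_\psi$ for all $k$ and derives the contradiction from (\ref{relation}) for that one $\psi$. Your worry about characters with $\mathfrak{f}_\chi=\mathcal{O}_K$ is actually unnecessary even in your ordering (no first Kronecker limit formula is needed), since such characters trivially factor through each $\mathrm{Cl}(\mathfrak{f}\mathfrak{p}_k^{-e_k})$ and are therefore already absorbed into your counting bound, just as the paper's ordering avoids them automatically because its chosen $\psi$ satisfies $\mathfrak{p}_k\mid\mathfrak{f}_\psi$ for all $k$.
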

\begin{proof}
We identify $\mathrm{Cl}(\mathfrak{f})$ with
$\mathrm{Gal}(K_\mathfrak{f}/K)$ via the Artin map. Setting
$F=K(\varepsilon)$ we derive that
\begin{eqnarray}\label{number1}
&&\#\big\{\chi~\mathrm{of}~\mathrm{Gal}(K_\mathfrak{f}/K):~
\chi|_{\mathrm{Gal}(K_\mathfrak{f}/F)}\neq1\big\}\nonumber\\
&=&\#\big\{\chi~\textrm{of}~\mathrm{Gal}(K_\mathfrak{f}/K)\big\}
-\#\big\{\chi~\textrm{of}~\mathrm{Gal}(K_\mathfrak{f}/K)~:~\chi|_{\mathrm{Gal}(K_\mathfrak{f}/F)}=1\big\}\nonumber\\
&=&\#\big\{\chi~\textrm{of}~\mathrm{Gal}(K_\mathfrak{f}/K)\big\}
-\#\big\{\chi~\textrm{of}~\mathrm{Gal}(F/K)\big\}=[K_\mathfrak{f}:K]-[F:K].
\end{eqnarray}
Furthermore, we have
\begin{eqnarray}\label{number2}
&&\#\big\{\chi~\textrm{of}~\mathrm{Gal}(K_\mathfrak{f}/K)~:~\mathfrak{p}_k\nmid\mathfrak{f}_\chi~
\textrm{for
some}~k\big\}\nonumber\\
&=&\#\big\{\chi~\textrm{of}~\mathrm{Gal}(K_\mathfrak{f}/K)~:~\mathfrak{f}_\chi~|~\mathfrak{f}\mathfrak{p}_k^{-e_k}
~\textrm{for some}~k\big\}\nonumber\\
&\leq&\sum_{k=1}^n
\#\big\{\chi~\textrm{of}~\mathrm{Gal}(K_{\mathfrak{f}\mathfrak{p}_k^{-e_k}}/K)\big\}
=\sum_{k=1}^n[K_{\mathfrak{f}\mathfrak{p}_k^{-e_k}}:K] .
\end{eqnarray}
Now, suppose that $F$ is properly contained in $K_\mathfrak{f}$.
Then we get from the hypothesis (\ref{hypothesis}) that
\begin{equation*}
[K_\mathfrak{f}:K]-[F:K]=[K_\mathfrak{f}:K]\bigg(1-\frac{1}{[K_\mathfrak{f}:F]}\bigg)>2\sum_{k=1}^n
[K_{\mathfrak{f}\mathfrak{p}_k^{-e_k}}:K]\bigg(1-\frac{1}{2}\bigg)=\sum_{k=1}^n
[K_{\mathfrak{f}\mathfrak{p}_k^{-e_k}}:K].
\end{equation*}
Thus there exists a character $\psi$ of
$\mathrm{Gal}(K_\mathfrak{f}/K)$ such that
\begin{equation*}
\psi|_{\mathrm{Gal}(K_\mathfrak{f}/F)}\neq1\quad\textrm{and}\quad
\mathfrak{p}_k~|~\mathfrak{f}_\psi~\textrm{for all}~k
\end{equation*}
by (\ref{number1}) and (\ref{number2}). Hence we obtain by
(\ref{relation})
\begin{equation}\label{contradiction}
0\neq
L_{\mathfrak{f}_\psi}(1,~\psi_0)=TS_\mathfrak{f}(\overline{\psi},~g_\mathfrak{f})
\end{equation}
for certain nonzero constant $T$ and the proper character $\psi_0$
of $\mathrm{Cl}(\mathfrak{f}_\psi)$ corresponding to $\psi$. On the
other hand, we achieve that
\begin{eqnarray*}
\big(\psi(C')-4\big)S_\mathfrak{f}(\overline{\psi},~g_\mathfrak{f})
&=&\big(\overline{\psi}(C'^{-1})-4\big)\sum_{C\in\mathrm{Cl}(\mathfrak{f})}\overline{\psi}(C)\log|g_\mathfrak{f}(C)|\\
&=&\sum_{C\in\mathrm{Cl}(\mathfrak{f})}\overline{\psi}(C)\bigg|\bigg(\frac{g_\mathfrak{f}(C')}{g_\mathfrak{f}(C_0)^4}\bigg)^{\sigma(C)}\bigg|\\
&=&\sum_{\begin{smallmatrix}C_1\in\mathrm{Gal}(K_\mathfrak{f}/K)\\C_1\hspace{-0.2cm}
\pmod{\mathrm{Gal}(K_\mathfrak{f}/F)}\end{smallmatrix}}
\sum_{C_2\in\mathrm{Gal}(K_\mathfrak{f}/F)}
\overline{\psi}(C_1C_2)\log|\varepsilon^{\sigma(C_1C_2)}|\\
&=&\sum_{C_1}\overline{\psi}(C_1)\log|\varepsilon^{\sigma(C_1)}|\bigg(\sum_{C_2}\overline{\psi}(C_2)\bigg)
\quad\textrm{by (\ref{Artin}) and the fact}~\varepsilon\in F\\
&=&0\quad\textrm{by the
fact}~\psi|_{\mathrm{Gal}(K_\mathfrak{f}/F)}\neq1,
\end{eqnarray*}
which contradicts (\ref{contradiction}) because $\psi(C')-4\neq0$.
Therefore $F=K_\mathfrak{f}$ as desired.
\end{proof}

\begin{remark}\label{power}
\begin{itemize}
\item[(i)] For any class $C\in\mathrm{Cl}(\mathfrak{f})$, the value $g_\mathfrak{f}(C)$ generates $K_\mathfrak{f}$ over $K$
by the transformation formula (\ref{Artin}) under the assumption
(\ref{hypothesis}).
\item[(ii)]
Any nonzero power of $\varepsilon$ can also generate
$K_\mathfrak{f}$ over $K$ in the proof of Theorem \ref{generator}.
\end{itemize}
\end{remark}

\begin{corollary}\label{Schertz}
Let $N\geq3$ be an odd integer and assume (\ref{hypothesis}) with
$\mathfrak{f}=N\mathcal{O}_K$. Then the singular value
$y_{(0,~\frac{1}{N})}(\theta)^4$ generates $K_{(N)}$ over $K$.
\end{corollary}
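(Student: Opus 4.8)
The plan is to identify $y_{(0,\frac{1}{N})}(\theta)^{12N}$ with a quotient of Siegel--Ramachandra invariants of the shape handled by Theorem \ref{generator}, and then pass from the $12N$-th power down to the $4$-th power.

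First I would take $\mathfrak{f}=N\mathcal{O}_K$, so that $K_\mathfrak{f}=K_{(N)}$, that $\mathfrak{f}\neq\mathcal{O}_K$ since $N\geq3$, and that the smallest positive integer in $\mathfrak{f}$ is $N$. For the unit class $C_0$ choose $\mathfrak{c}=\mathcal{O}_K$; then $\mathfrak{f}\mathfrak{c}^{-1}=[N\theta,~N]$, $z=\theta$, and $1=\tfrac{0}{N}(N\theta)+\tfrac{1}{N}(N)$, so $g_\mathfrak{f}(C_0)=g_{(0,\frac{1}{N})}(\theta)^{12N}$. The hypothesis that $N$ is odd enters here: since $\gcd(2,~N)=1$, the principal ideal $(2)\mathcal{O}_K$ is prime to $\mathfrak{f}$, hence defines a class $C'\in\mathrm{Cl}(\mathfrak{f})$; choosing $\mathfrak{c}=(2)\mathcal{O}_K$ gives $\mathfrak{f}\mathfrak{c}^{-1}=[\tfrac{N}{2}\theta,~\tfrac{N}{2}]$, $z=\theta$ again, and $1=\tfrac{0}{N}(\tfrac{N}{2}\theta)+\tfrac{2}{N}(\tfrac{N}{2})$, so that $g_\mathfrak{f}(C')=g_{(0,\frac{2}{N})}(\theta)^{12N}$ (with $\tfrac{2}{N}\notin\mathbb{Z}$ as $N\geq3$). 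Using the definition $y_{(0,\frac{1}{N})}(\tau)=-g_{(0,\frac{2}{N})}(\tau)/g_{(0,\frac{1}{N})}(\tau)^4$ from (\ref{setting}) and that $12N$ is even, this yields
\begin{equation*}
\varepsilon := \frac{g_\mathfrak{f}(C')}{g_\mathfrak{f}(C_0)^4} = \left(\frac{g_{(0,\frac{2}{N})}(\theta)}{g_{(0,\frac{1}{N})}(\theta)^4}\right)^{12N} = y_{(0,\frac{1}{N})}(\theta)^{12N}.
\end{equation*}

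Next, since we are assuming (\ref{hypothesis}) for $\mathfrak{f}=N\mathcal{O}_K$, Theorem \ref{generator} (applied with this $C'$) gives $K(\varepsilon)=K_\mathfrak{f}=K_{(N)}$. As $\varepsilon=\big(y_{(0,\frac{1}{N})}(\theta)^4\big)^{3N}$, this forces $K_{(N)}=K(\varepsilon)\subseteq K\big(y_{(0,\frac{1}{N})}(\theta)^4\big)$. For the opposite inclusion I would observe that $y_{(0,\frac{1}{N})}(\tau)^4=g_{(0,\frac{2}{N})}(\tau)^4/g_{(0,\frac{1}{N})}(\tau)^{16}$ lies in $\mathcal{F}_N$: this is the membership check of Proposition \ref{Siegelmodularity} for the exponents $m\big((0,\tfrac{2}{N})\big)=4$ and $m\big((0,\tfrac{1}{N})\big)=-16$ (the congruences being immediate since the first components vanish), and it is precisely the assertion $y_{(0,\frac{1}{N})}(\tau)^{4/\gcd(4,~N)}\in\mathcal{F}_N$ used in the proof of Theorem \ref{main}, here with $\gcd(4,~N)=1$ because $N$ is odd. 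Siegel functions have no zeros or poles on $\mathfrak{H}$, so this function is defined and finite at $\theta$, whence $y_{(0,\frac{1}{N})}(\theta)^4\in K_{(N)}$ by Proposition \ref{cm}(ii). Combining the two inclusions gives $K\big(y_{(0,\frac{1}{N})}(\theta)^4\big)=K_{(N)}$.

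The step I expect to demand the most care is the identification in the second paragraph: verifying that the choice $\mathfrak{c}=(2)\mathcal{O}_K$ (legitimate precisely because $N$ is odd) reproduces $g_{(0,\frac{2}{N})}(\theta)^{12N}$ under the definition of the Siegel--Ramachandra invariant, with the correct orientation of the lattice basis ($z\in\mathfrak{H}$) and the correct choice $(a,~b)=(0,~2)$ of integers with $1=\tfrac{a}{N}z_1+\tfrac{b}{N}z_2$. Once that bookkeeping is in place, the remainder is a direct appeal to Theorem \ref{generator} together with a routine $\mathcal{F}_N$-membership verification.
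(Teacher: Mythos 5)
Your proposal is correct, and its overall skeleton matches the paper's: write $y_{(0,\frac{1}{N})}(\theta)^{12N}$ as $g_\mathfrak{f}(C')/g_\mathfrak{f}(C_0)^4$, invoke Theorem \ref{generator}, and then descend to the fourth power using $y_{(0,\frac{1}{N})}(\theta)^4\in K_{(N)}$. Where you genuinely diverge is in the identification of $g_{(0,\frac{2}{N})}(\theta)^{12N}$ with a Siegel--Ramachandra invariant. The paper proceeds Galois-theoretically: since $N$ is odd, $\alpha=\left(\begin{smallmatrix}2&0\\0&2\end{smallmatrix}\right)\in W_{N,\theta}$, and Propositions \ref{rayGalois} and \ref{F_N} together with (\ref{Artin}) show that $g_{(0,\frac{2}{N})}(\theta)^{12N}=\big(g_\mathfrak{f}(C_0)\big)^{\sigma(C')}=g_\mathfrak{f}(C')$ for \emph{some} class $C'$, without naming it. You instead compute directly from the definition of $g_\mathfrak{f}(C)$ with the explicit integral ideal $\mathfrak{c}=2\mathcal{O}_K$ (coprime to $\mathfrak{f}$ exactly because $N$ is odd), getting $\mathfrak{f}\mathfrak{c}^{-1}=[\tfrac{N}{2}\theta,\tfrac{N}{2}]$, $z=\theta$, $(a,b)=(0,2)$, hence $g_\mathfrak{f}(C')=g_{(0,\frac{2}{N})}(\theta)^{12N}$ for $C'$ the ray class of $2\mathcal{O}_K$; your bookkeeping there (orientation $z\in\mathfrak{H}$, $N$ the smallest positive integer in $\mathfrak{f}$, $\tfrac{2}{N}\notin\mathbb{Z}$) is right, and Theorem \ref{generator} applies to any class $C'$, so the choice is harmless. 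Your route is more elementary---it avoids Shimura reciprocity entirely and pins down $C'$ explicitly---while the paper's route makes visible the Galois-theoretic meaning (that $g_{(0,\frac{2}{N})}(\theta)^{12N}$ is a conjugate of $g_\mathfrak{f}(C_0)$ over $H$) and recycles machinery already set up for Theorem \ref{main}. You also spell out, more carefully than the paper does, why $y_{(0,\frac{1}{N})}(\theta)^4\in K_{(N)}$ (via Proposition \ref{Siegelmodularity} with exponents $4$ and $-16$ and Proposition \ref{cm}(ii)); the only slight imprecision is your aside that the congruences hold ``since the first components vanish,'' which covers the $(Nr_1)^2$ and cross-term conditions but not the $(Nr_2)^2$ condition---that one holds because $4\cdot2^2-16\cdot1^2=0$, a trivial check, so this is cosmetic rather than a gap.
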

\begin{proof}
Observe that for the unit class $C_0$ we have
\begin{equation*}
g_\mathfrak{f}(C_0)=g_{(0,~\frac{1}{N})}(\theta)^{12N}.
\end{equation*}
Since $N$ is odd, $\alpha=\left(\begin{smallmatrix}2 & 0\\0&
2\end{smallmatrix}\right)$ belongs to $W_{N,~\theta}$. Then by
Propositions \ref{rayGalois} and \ref{F_N} we deduce that
\begin{equation*}
g_{(0,~\frac{2}{N})}(\theta)^{12N}=
g_{(0,~\frac{1}{N})\alpha}(\theta)^{12N}=\bigg(g_{(0,~\frac{1}{N})}(\theta)^{12N}\bigg)^\alpha
=g_\mathfrak{f}(C_0)^{\sigma(C')}=g_\mathfrak{f}(C')
\end{equation*}
for some $C'\in\mathrm{Cl}(\mathfrak{f})$. Therefore the singular
value
\begin{equation*}
y_{(0,~\frac{1}{N})}(\theta)^{12N}=\frac{g_{(0,~\frac{2}{N})}(\theta)^{12N}}{g_{(0,~\frac{1}{N})}(\theta)^{48N}}
=\frac{g_\mathfrak{f}(C')}{g_\mathfrak{f}(C_0)^4}
\end{equation*}
generates $K_\mathfrak{f}=K_{(N)}$ over $K$ by Theorem
\ref{generator}. Since $y_{(0,~\frac{1}{N})}(\theta)^4$ belongs to
$K_{(N)}$, it also generates $K_{(N)}$ over $K$.
\end{proof}

\begin{remark}\label{remark} Let $K$ be an imaginary quadratic field
with $d_K\leq-7$ and $N\geq3$ be an odd integer.
\begin{itemize}
\item[(i)] Suppose that $N=p^n(n\geq1)$ where $p$ is an odd prime which is inert or
ramified in $K/\mathbb{Q}$. One can derive by Lemma \ref{degree}
that
\begin{equation*}
[K_{(N)}:K]=\left\{\begin{array}{llll}
\frac{h_K(p^2-1)p^{2(n-1)}}{2}&\geq\frac{h_K(3^2-1)3^{2\cdot0}}{2}&>2h_K
&
\textrm{if $p$ is inert in $K/\mathbb{Q}$}\\
\frac{h_K(p-1)p^{2n-1}}{2}&\geq\frac{h_K(3-1)3^{2\cdot1-1}}{2}&>2h_K
& \textrm{if $p$ is ramified in $K/\mathbb{Q}$}.
\end{array}\right.
\end{equation*}
Thus $\mathfrak{f}=N\mathcal{O}_K$ satisfies the condition
(\ref{hypothesis}) and hence we are able to apply Corollary
\ref{Schertz} for such $N$.
\item[(ii)] Suppose, in general
\begin{equation*}
\mathfrak{f}=N\mathcal{O}_K=\prod_{k=1}^n\mathfrak{p}_k^{e_k}\quad\textrm{with}~n\geq2.
\end{equation*}
Then it follows from Lemma \ref{degree} that the condition
(\ref{hypothesis}) is equivalent to
\begin{equation}\label{assumption}
\frac{1}{2}~>~\sum_{k=1}^n\frac{1}{\phi(\mathfrak{p}_k^{e_k})}.
\end{equation}
Therefore one can also apply Corollary \ref{Schertz} under the
assumption (\ref{assumption}).
\end{itemize}
\end{remark}

\bibliographystyle{amsplain}

\end{document}